\title[Stochasticity using potential functions]{Stochastic first-order methods:\\ non-asymptotic and computer-aided analyses via potential functions}
\setlist[itemize,1]{label=$\diamond$}
\setlist[enumerate,1]{leftmargin=.5cm}
\newcolumntype{R}[1]{>{\raggedleft\let\newline\\\arraybackslash\hspace{0pt}}m{#1}}
\pgfplotsset{compat=1.13}
\tikzstyle{loosely dashed}=          [dash pattern=on 3pt off 6pt]
\pgfplotsset{plotOptions/.style={%
		width=\linewidth,
		xlabel={Iteration $k$},
		label style={font=\scriptsize},
		legend style={font=\scriptsize},
		xtick={0,25,50,75,100},
		tick label style={font=\scriptsize},
		solid,
		very thick
	}}
	\definecolor{silver}{cmyk}{0,0,0,0.3}
	\definecolor{yellow}{cmyk}{0,.5,.3,0}
	\definecolor{orange}{cmyk}{0,0.5,1,0}
	\definecolor{red}{cmyk}{0,1,1,0}
	\definecolor{reddishyellow}{cmyk}{0.5,0.2,0,0}
	\definecolor{black}{cmyk}{0,0,0.0,1.0}
	\definecolor{darkYellow}{cmyk}{0.8,0,0.0,0.5}
	\definecolor{darkSilver}{cmyk}{0,0,0,0.1}
	\definecolor{lightyellow}{cmyk}{0.0,0,0,0.0}
	\definecolor{lighteryellow}{cmyk}{0,0,0.0,0.0}
	\definecolor{lighteryellow}{cmyk}{0,0,0.0,0.0}
	\definecolor{lightestyellow}{cmyk}{0.0,0,0.0,0.0}
	\definecolor{colorP1}{RGB}{55,126,184}  
	\definecolor{colorP2}{RGB}{228,26,28}  
	\definecolor{colorP3}{RGB}{152,78,163} 
	\definecolor{colorP4}{RGB}{77,175,74}  
	\definecolor{colorP5}{rgb}{0.6, 0.4, 0.08} 
	\newcommand{\Rd}{\mathbb{R}^d}
	\newcommand{\R}{\mathbb{R}}
	\newcommand{\Sb}{\mathbb{S}}
	\newcommand{\inner}[2]{{\langle #1; #2\rangle}}
	\newcommand{\normsq}[1]{{\lVert #1\rVert ^2}}
	\newcommand{\norm}[1]{{\lVert #1\rVert}}
	\newcommand{\V}{{\mathcal{V}}}
	\newcommand{\F}{{\mathcal{F}}}
	\newcommand{\E}{{\mathbb{E}}}
	\newcommand{\N}{{\mathbb{N}}}
	\newcommand{\bO}{{O}}
	\newcommand{\FmuL}{{\mathcal{F}_{\mu,L}}}
	\newcommand{\FL}{{\mathcal{F}_{L}}}
	\newcommand{\Fccp}{{\mathcal{F}_{0,\infty}}}
	\newcommand{\edit}[1]{{#1}}
	\newcommand{\edittwo}[1]{{#1}}
	\newcommand{\correc}[1]{{\color{red}#1}}
	\newcommand{\bx}{{\mathbf{x}}}
	\newcommand{\by}{{\mathbf{y}}}
	\newcommand{\bz}{{\mathbf{z}}}
	\newcommand{\bg}{{\mathbf{g}}}
	\newcommand{\bG}{{\mathbf{G}}}
	\newcommand{\bw}{{\mathbf{w}}}
	\newcommand{\bfu}{{\mathbf{f}}}
	\newcommand{\bU}{{\mathbf{U}}}
	\newcommand{\secref}[1]{{Sec.~\ref{#1}}}
	\newcommand{\appref}[1]{{App.~\ref{#1}}}
	\newcommand{\prox}[2]{{\mathrm{prox}_{#1}\left(#2\right)}}
	\newcommand{\argm}[2]{{\mathrm{argmin}_{#1}\left\{#2\right\}}}
	\newcommand{\tra}{{\mathrm{Trace}}}
	\newcommand{\sspan}{{\mathrm{span}}}
	\newcommand{\dom}{{\mathrm{dom}}}
	\newcommand{\defeq}{:=}
	\newcommand{\st}{\text{ subject to }}
\begin{document}
		
		\maketitle
		\vspace{-1cm}
		\begin{flushleft}\correc{This version contains additional {clarifications} to the text, as well as {corrected typos} in the appendix (see~\hyperref[s:acks]{Acknowledgments}). All those changes are highlighted in {\bf{}red} throughout the text.}
		\end{flushleft}
		\vspace{-.15cm}
		
		\begin{abstract}
			We provide a novel computer-assisted technique for systematically analyzing first-order methods for optimization. In contrast with previous works, the approach is particularly suited for handling sublinear convergence rates and stochastic oracles. The technique relies on semidefinite programming and potential functions. It allows simultaneously obtaining worst-case guarantees on the behavior of those algorithms, and assisting in choosing appropriate parameters for tuning their worst-case performances. The technique also benefits from comfortable \emph{tightness guarantees}, meaning that unsatisfactory results can be improved only by changing the setting. We use the approach for analyzing deterministic and stochastic first-order methods under different assumptions on the nature of the stochastic noise. Among others, we treat unstructured noise with bounded variance, different noise models arising in over-parametrized expectation minimization problems, and randomized block-coordinate descent schemes.
		\end{abstract}
			
			\section{Introduction}
			In this work, we study methods for solving convex (stochastic) minimization problems of the form
			\begin{equation}
			\min_{x\in\Rd} f(x),\label{eq:OPT}\tag{Opt}
			\end{equation}
			with $f\in\F$ some class of convex, proper and closed functions. To perform the minimization, we are given access to an approximate first-order oracle
			$G(x;i)\approx f'(x)$, where $i$ is some random variable uniformly sampled in an index set $I$. This includes unbiased stochastic oracles satisfying $\E_i G(x;i)=f'(x)$, but also biased oracles used in block-coordinate methods $G(x;i)=\nabla_i f(x)$ (directional derivative along the $i^\text{th}$ block of coordinates).

			We present a generic approach, based on potential functions, for analyzing and designing first-order methods in the case where $I$ is a finite set---two such problems are the \emph{empirical risk minimization} setting where $f(x)=\tfrac1n \sum_{i=1}^n f_i(x)$ and $ G(x;i)=f_i'(x)$, and the block-coordinate setting. Even though most proofs presented in the sequel turned out not to depend on the cardinality of $I$, and are therefore valid for expectation minimization problems $f(x)=\E_i f(x;i)$, cardinality can play a major role in specific settings (e.g., finite sums or coordinate descent). Therefore, we do not explicitly look for results that are independent of it, but rather note that it naturally does not intervene in, e.g., analyses of stochastic gradient-based methods that we propose in this paper.
			
			\subsection{Preliminaries}\label{s:prelim}
			A continuously differentiable function $f:\Rd\rightarrow\R$ is called $L$-smooth if its gradient satisfies a Lipschitz condition with parameter $L>0$:
			\[ \norm{f'(x)-f'(y)}\leq L \norm{x-y} \quad \text{ for all } x,y\in\Rd,\]
			\edit{where $\inner{.}{.}$ denotes} the Euclidean inner product and $\norm{.}$ is the induced norm. We denote by $\FL(\Rd)$ the class of $L$-smooth convex functions over $\Rd$ and by $\FL$ the class of smooth convex functions where $d$ is left unspecified. In addition, we denote by $x_\star$ some optimal solution to~\eqref{eq:OPT}, and by $f_\star:=f(x_\star)$ the optimal value. Those assumptions are standard in the optimization literature~\citep{Book:polyak1987,Book:Nesterov}.
			
			\subsection{Contributions}
			The main contribution of this work is to propose a framework for constructing potentials for first-order stochastic algorithms; in contrast with previous related works on the topic, the technique is specialized for establishing sublinear convergence rates. The methodology benefits from an advantageous tightness property, meaning that it fails only when it is impossible to prove the desired result using potential functions with the chosen structure. The framework allows dealing with, among others, all stochastic settings presented in Table~\ref{tab:setting_summary}. We use the methodology for designing novel analyses of SGD and averaging schemes in different stochastic optimization settings. Based on the methodology for constructing potentials, we propose a complementary automatic parameter selection technique in \appref{sec:paramSelec}, whose main idea is roughly to optimize the algorithmic parameters while designing the potentials.
			
			\begin{table}[!ht]
				\begin{center}
					{\renewcommand{\arraystretch}{1.3}
						\begin{tabular}{@{}ccccc@{}}
							\specialrule{2pt}{1pt}{1pt}
							$\F$ & Noise model &  $\E_i\normsq{G(x;i)}\leq$ &  Note & Sections \\
							\specialrule{2pt}{1pt}{1pt}
							$f\in\FL$ & $G(x;i)=f'(x)$ &  --- & No noise & \ref{sec:GM}, \ref{sec:GM_app} \\
							\multirow{2}{.1\linewidth}{ $f\in\FL$ } & \multirow{2}{.2\linewidth}{\centering{\color{red}$\E_i G(x;i)=f'(x)$}} &\multirow{2}{.27\linewidth}{\centering $\sigma_\star^2+2\rho_1 L (f(x)-f_\star)+\rho_2 \normsq{f'(x)}$ for all $x\in\Rd$} &  \multirow{2}{.17\linewidth}{\centering Unified variance model} & \multirow{2}{.12\linewidth}{\centering \ref{sec:boundedVar}, \ref{sec:boundedvariances}, \ref{sec:weakgrowth_proofs}}\\
							& & \\
							$f_i\in\FL$ & $G(x;i)=f_i'(x)$ & $\sigma^2_\star$ for some $x=x_\star$ & Variance at $x_\star$ & \ref{sec:zeroVarOpt}, \ref{sec:overparameterized}, \ref{sec:boundedVaratOpt} \\
							$f_i\in\FL$ & $G(x;i)=f_i'(x)$ & --- & Finite sums & (not presented) \\
							$f_i\in\FL$ & $G(x;i)=\nabla_i f(x)$ & --- & Block-coordinate  & \ref{sec:coordinatedescent} \\
							\specialrule{2pt}{1pt}{1pt}
						\end{tabular}}\vspace{-.6cm}
				\end{center}
				\caption{Stochastic settings summary: non-exhaustive list of assumptions on the classes of functions $\F$ and the nature of the noisy oracle $G(x;i)$ that can be directly embedded within the framework. Further examples are discussed in \secref{sec:ccl}.}\label{tab:setting_summary}
			\end{table}
			
			\subsection{Prior works}\label{sec:prior}
			In what follows, we take a \emph{worst-case} point of view, that is standard in the optimization and machine learning communities, as in the original works~\citep{Book:NemirovskyYudin,Book:polyak1987,Book:Nesterov}.
			
			\paragraph{Stochastic first-order methods.} Stochastic approximation algorithms date back to the works of~\citet{robbins1951stochastic}, and numerous analyses and improvements can be found in the literature (see e.g.,~\citet{bottou2018optimization} and the references therein). Among others, averaging plays a crucial role for improving their convergence guarantees~\citep{ruppert1988efficient,polyak1992acceleration}. 
			
			\sloppy Stochastic methods are usually analyzed using a uniformly bounded variance assumption (i.e., $\E_i\normsq{G(x;i)-f'(x)}\leq \sigma^2$ for all $x\in\Rd$), or bounded gradients. This intrinsically has limited applicability ranges (e.g., it does not even hold for quadratic minimization), although theoretical guarantees for stochastic methods involving momentum were out of reach so far without such assumptions~ \citep{hu2009accelerated,xiao2010dual,devolder2011stochastic,lan2012optimal,cohen2018acceleration}. A better understanding of stochastic gradient methods (in particular, those involving momentum) can therefore only be achieved by studying alternatives to standard assumptions. In particular, let us mention the non-asymptotic analyses of~\citet{moulines2011non} (not relying on the uniformly bounded assumption), and~\citet{schmidt2013fast,belkin18,vaswani2018fast} (strong and weak growth conditions), and numerous works on stochastic methods for quadratic minimization, see e.g.,~\citet{bach2013non,dieuleveut2017harder,jain2018accelerating,jain2018parallelizing}.
			
			\paragraph{Potential functions for first-order methods.} Potential functions have been used a lot for studying convergence properties of first-order methods. This kind of analyses is typically natural for obtaining linear convergence results---potentials are then often being referred to as Lyapunov functions~\citep{lyapunov}, as in the analyses of dynamical systems---, but is typically also used for certifying sublinear convergence rates, as discussed in \secref{sec:Lyap}. As being nicely reviewed by~\citet{bansal2017potential}, the use of potential functions is not new in the optimization literature, and is closely related to the machinery of \emph{estimate sequences}~\citep{Book:Nesterov,Book:Nesterov2,wilson2016lyapunov}. Successful uses of such techniques include the original developments underlying accelerated gradient~\citep[Theorem 1]{Nesterov:1983wy} and FISTA~\citep[Lemma 4.1]{beck2009fast}.
			
			\paragraph{Computer-assisted analyses of first-order methods.} \sloppy Recently, linear matrix inequalities (LMI) and semidefinite programming (SDP) \edit{(see e.g.,~\cite{vandenberghe1996semidefinite})} techniques were used for automatically generating worst-case guarantees for first-order methods. This trend started with \emph{performance estimation} as initiated by~\cite{drori2014performance} and was taken \edit{further in different directions: for designing optimal} methods~\citep{drori2014performance,drori2016optimal,kim2016optimized,kim2018optimizing}, lower bounds~\citep{drori2017exact}, or to be featured with automated tightness guarantees and broader range of applications~\citep{taylor2017exact,taylor2017smooth}. A~competing strategy, inspired on the one hand by performance estimation and on the other one by control theory, was developed by~\cite{lessard2016analysis}. This technique is based on \emph{integral quadratic constraints} and was initially specialized for obtaining linear convergence rates. \edit{This work adapts the performance estimation approach for using potential functions, easing the development of proofs in settings involving sublinear convergence rates. Detailed relations to works in this research stream are provided in \appref{app:rel_works}.}

			\subsection{Organization}
			The flow of this work is as follows. First, we summarize the overall methodology and recall the general principle behind potential-based proofs in \secref{sec:Lyap}. After that, the procedure for designing potential functions is presented in \secref{sec:designpotentials}; in particular, the methodology is illustrated on gradient descent and a few stochastic variants in the same section.  Then, we present simple results obtained with the analysis technique in different stochastic optimization settings (a few samples from Table~\ref{tab:setting_summary}) in \secref{sec:zeroVarOpt} and in the appendix. Most technical tools are presented in appendix, including an automatic parameter selection technique, and the application to accelerated first-order methods, to stochastic optimization under weak growth conditions, and to coordinate descent. The organization and content of the appendix is summarized in \appref{sec:gen}.
			\section{Potential functions for a restricted class of first-order methods}\label{sec:Lyap}
			Let us restrict ourselves to a specific class of methods encapsulating SGD, along with possibly averaging and momentum. This restriction is made for readability purposes only. We consider the following class of (stochastic) first-order methods:
			\begin{equation}\label{eq:algo}\tag{SFO}
			\begin{aligned}
			y_{k+1}&=y_k+\alpha_k \, (x_k-y_k)+ \alpha_k' \, (z_k-y_k),  \\
			x_{k+1}^{(i_{k})}&=y_{k+1}+\beta_k\,(x_k-y_{k+1})+\beta_k'\, (z_k-y_{k+1}) - \delta_k G(y_{k+1};i_{k}),\\
			z_{k+1}^{(i_{k})}&=y_{k+1}+\gamma_k\,(x_k-y_{k+1})+\gamma_k'\, (z_k-y_{k+1}) - \epsilon_k G(y_{k+1};i_{k}),\\
			\end{aligned}
			\end{equation} 
			where the superscript $(i_k)$ corresponds to the sampled random variable that was used for performing iteration $k$.
			In what follows, we provide a generic approach to study its worst-case properties. For now, let us ask the question \emph{how can we prove such an algorithm work?} A possible methodology for showing convergence consists in exhibiting a potential function, also often referred to as a \emph{Lyapunov function}. For example, when $x_{k+1}=x_k-\tfrac1L f'(x_k)$ (i.e., gradient descent), it is possible to show (see, e.g.,~\cite{bansal2017potential}) that for all $f\in\FL$ and $k\geq0$ the inequality $\phi_{k+1}^f(x_{k+1})\leq\phi_k^f(x_k)$ holds with \[ \phi^f_k(x_k) = k(f(x_{k})-f_\star)+\frac L2 \normsq{x_{k}-x_\star},\]
			leading to $N(f(x_{N})-f_\star)\leq {\phi^f_{N}\leq \phi^f_{N-1} \leq \hdots \leq \phi^f_0}{=\frac{L}2 \normsq{x_0-x_\star}}$. Therefore, $f(x_{N})-f_\star\leq \frac{L\normsq{x_0-x_\star}}{2N}$. This proof relies on two key ideas: (i) forget how $x_k$ was generated and study only one iteration at a time, and (ii) choose an appropriate sequence of potentials. Such proofs are philosophically simple, but it is generally unclear how to chose such potentials. Choosing an appropriate sequence $\{\phi_k^f\}$ usually requires a lot of intuitions and potentially tedious investigations. Such proofs may therefore be seen as reserved to experts, and the purpose of this work is to alleviate as much as possible this burden, by proposing a systematic way of designing and verifying potentials. All proofs developed hereafter follow the same principles, and reduce to proving inequalities of type
			\begin{equation}\label{eq:pot}\tag{Pot}
			\E_{i_k} \phi^f_{k+1}(y_{k+1},x_{k+1}^{(i_k)},z_{k+1}^{(i_k)})\leq \phi^f_{k}(y_k,x_k,z_k) + e_k,
			\end{equation}
			for all $f\in\F$ and all $x_k,y_k,z_k$ used to generate $y_{k+1},x_{k+1}^{(i_k)},z_{k+1}^{(i_k)}$ with the method of interest. The term $e_k$ is typically used for encapsulating the \emph{variance} of stochastic algorithms. As before, a recursive use of this inequality allows obtaining
			\[ \E \phi^f_{N}(y_{N},x_{N},z_{N})\leq \phi^f_{0}(y_0,x_0,z_0) + \sum_{k=0}^{N-1}e_k, \]
			and the game consists in choosing appropriate sequences for $\phi^f_k$ and $e_k$. The expectation $\E$ is taken over all sequences of indices $(i_1,i_2,\hdots,i_N)$ with $i_k\in I$. \edit{Such convergence results in expectations can then typically be converted to almost sure convergence using, e.g., Robbins-Siegmund supermartingale theorem~\citep{robbins1971convergence,Duflo97}.}
			
			\section{Design methodology for potential functions}\label{sec:designpotentials}
			We propose a systematic way to verify that a given tuple $(\phi_{k+1}^f,\phi_{k}^f,e_k)$ satisfies inequality~\eqref{eq:pot}. First of all, it is clear that the set of such acceptable tuples is convex. Even more, when $\phi_{k}^f,\phi_{k+1}^f$ are both \emph{quadratic} functions of the first-order information $G(.;i)$ and the coordinates $x$ and \emph{linear} functions of the function values $f(.)$, then verifying that the tuple satisfies~\eqref{eq:pot} can equivalently be formulated as a linear matrix inequality (LMI). This section aims at providing strategies for finding sequences of potentials $\{\phi_k^f\}_k$ based on a few examples and on Proposition~\ref{prop:verify_potential} that follows.
			\subsection{Verifying a potential}
			The main tool we use for designing potentials is summarized through the following proposition. Even though its proof may appear as straightforward (we do not provide it), the main component in our strategies is the possibility of efficiently formulating~\eqref{eq:tt} (for verifying a potential) using LMIs. In the sequel, we present the methodology in a high-level form and delay all LMI formulations to appendix for readability purposes.
			\begin{proposition}\label{prop:verify_potential} 
				Let $\F$ be a class of functions, $I$ be an index set,  $G(x;i)$ (with ${i}\in I$) be satisfying one of the noise model of Table~\ref{tab:setting_summary}, \eqref{eq:algo} be the class of algorithms under consideration, and a given tuple $(\phi_{k+1}^f,\phi_k^f,e_k)$. We have \[\E_{i_k} \phi^f_{k+1}(y_{k+1},x_{k+1}^{({i_k})},z_{k+1}^{({i_k})})\leq \phi^f_{k}(y_k,x_k,z_k) + e_k \quad \text{($\E_{i_k}$ denotes the expectation over ${i_k}\in I$)}\] for all $d\in\N$, $f\in\F(\Rd)$ and all $(y_k,x_k,z_k)\in\Rd\times\Rd\times\Rd$ if and only if
				\begin{equation}\label{eq:tt}\begin{aligned}
				0\geq \edit{\sup_{\substack{d,f,y_k,x_k,z_k\\ \{G(x; i)\}_{i\in I}}}} \ & \E_{i_k} \phi^f_{k+1}(y_{k+1},x_{k+1}^{({i_k})},z_{k+1}^{({i_k})})-\phi^f_{k}(y_k,x_k,z_k)- e_k\\
				\text{s.t. } &(y_{k+1},x_{k+1}^{({i_k})},z_{k+1}^{({i_k})}) \text{ generated by~\eqref{eq:algo} from } (y_{k},x_{k},z_{k})\\
				& \{G(x; i)\}_{i\in I} \text{ compatible with $f$ and the noise model for all $x\in\mathrm{dom}f$} \\
				& f\in\F(\Rd) \text{ and } f'(x_\star)=0.
				\end{aligned}
				\end{equation}
			\end{proposition}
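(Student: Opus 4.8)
The plan is to observe that the claimed equivalence is a \emph{verbatim} reformulation of a universally quantified inequality as an optimization feasibility statement, and then to spell out the elementary reason this reformulation is valid: for any real-valued function $h$ defined on a set $\Omega$, one has $h(\omega)\le 0$ for every $\omega\in\Omega$ if and only if $\sup_{\omega\in\Omega}h(\omega)\le 0$. No quantitative estimate is required, which is presumably why the authors deem the proof straightforward enough to omit.

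First I would make the correspondence explicit. Let $\Omega$ denote the set of all tuples $(d,f,y_k,x_k,z_k,\{G(x;i)\}_{i\in I})$ satisfying $d\in\N$, $f\in\F(\Rd)$, $f'(x_\star)=0$, and with $\{G(x;i)\}_{i\in I}$ compatible with $f$ and the prescribed noise model for every $x\in\dom f$; this is exactly the feasible set of the constraints in~\eqref{eq:tt}. For each $\omega\in\Omega$, the recursion~\eqref{eq:algo} determines $(y_{k+1},x_{k+1}^{(i_k)},z_{k+1}^{(i_k)})$ as a function of $\omega$ and of the sampled index $i_k\in I$ (note $y_{k+1}$ does not depend on $i_k$, while $x_{k+1}^{(i_k)},z_{k+1}^{(i_k)}$ do, through the oracle call), so
\[
h(\omega)\defeq\E_{i_k}\,\phi^f_{k+1}\big(y_{k+1},x_{k+1}^{(i_k)},z_{k+1}^{(i_k)}\big)-\phi^f_k(y_k,x_k,z_k)-e_k
\]
is a well-defined real number for every $\omega\in\Omega$. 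I would then note that the statement preceding ``if and only if'' is precisely ``$h(\omega)\le 0$ for all $\omega\in\Omega$,'' whereas~\eqref{eq:tt} is precisely ``$\sup_{\omega\in\Omega} h(\omega)\le 0$.''

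Next I would dispatch the two implications. For the forward direction, if $h(\omega)\le 0$ for every $\omega$, then $0$ is an upper bound on $\{h(\omega):\omega\in\Omega\}$, hence the supremum is $\le 0$. For the converse, if the supremum is $\le 0$, then for any fixed $\omega_0\in\Omega$ we get $h(\omega_0)\le\sup_{\omega\in\Omega}h(\omega)\le 0$, and since $\omega_0$ is arbitrary the potential inequality holds for all admissible configurations; the degenerate case $\Omega=\emptyset$ makes both sides vacuously true.

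The only delicate point — and the reason this is bookkeeping rather than mathematics — will be verifying that the constraints written in~\eqref{eq:tt} describe $\Omega$ faithfully: that ``$(y_{k+1},x_{k+1}^{(i_k)},z_{k+1}^{(i_k)})$ generated by~\eqref{eq:algo}'' encodes exactly the intended dependence on $(y_k,x_k,z_k)$ and on $i_k$, and that ``$\{G(x;i)\}_{i\in I}$ compatible with $f$ and the noise model for all $x$'' is the correct admissibility notion for the oracle. I do not expect a genuine obstacle here; the substantive content — that~\eqref{eq:tt} can in turn be transformed into a finite-dimensional LMI, using interpolation conditions for the class $\F$ and the observation that the objective depends on the oracle only through its values at the finitely many points queried by the algorithm (together with $x_\star$) — lies in the appendix and is not part of this proposition.
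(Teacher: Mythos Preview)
Your proposal is correct and matches the paper's stance: the authors explicitly state that the proof ``may appear as straightforward (we do not provide it),'' and your argument---reducing the equivalence to the tautology $\forall\omega\in\Omega,\ h(\omega)\le 0 \iff \sup_{\omega\in\Omega}h(\omega)\le 0$---is exactly the triviality they are alluding to. Your closing remark that the substantive content lies in the LMI reformulation of~\eqref{eq:tt} rather than in the proposition itself is also consistent with the paper's Remark~\ref{rem:lossless} and the appendix derivations.
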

			\begin{remark}\label{rem:lossless} In many standard settings (including noise models presented in Table~\ref{tab:setting_summary} and the use of quadratic potentials---see examples below), the decision problem~\eqref{eq:tt} can be reformulated as a LMI. This lossless reformation into a suitable LMI directly follows from the derivations presented by~\citet[Section 2]{taylor2017exact} for the deterministic setting. Those reformulations can be extended in a straightforward manner to all settings presented in Table~\ref{tab:setting_summary}, so we only present them in appendix for the different examples treated hereafter. As an introductory example, one can find the $3\times 3$ LMI reformulation for gradient descent in \appref{sec:Vk_pgm}; the other examples are summarized in \appref{sec:gen}.
			\end{remark}
			When the class of functions $\F$, the method and the noise model are clear from the context, we abusively denote the set of tuples $(\phi^f_{k+1},\phi^f_{k},e_k)$ that satisfies~\eqref{eq:tt} by $\V_k$.
			\subsection{Strategies for designing sequences of potentials}
			Taking advantage of Proposition~\ref{prop:verify_potential}, we propose a few strategies for choosing sequences of potential functions $\{\phi_k^f\}_k$ based on two examples. For simplicity, we start in a deterministic setting. The codes used for generating the numerics below are provided in \secref{sec:ccl}.
			\subsubsection{Example I: gradient descent}\label{sec:GM}
			Say we want to bound $\normsq{f'(x_N)}$, where $x_N$ is the iterate obtained after performing $N$ iterations of gradient descent $x_{k+1}=x_k-\tfrac1L f'(x_k)$ where $f\in\FL$. Let us choose the family of potentials:
			\begin{equation}\label{eq:pot_GD}
			\begin{aligned}
			\phi_k^f=\begin{pmatrix}x_k-x_\star\\ f'(x_k)\end{pmatrix}^\top \left[\begin{pmatrix}
			a_k & c_k \\ c_k & b_k
			\end{pmatrix}\otimes I_d\right]\begin{pmatrix}x_k-x_\star\\ f'(x_k)\end{pmatrix} + d_k\, (f(x_k)-f_\star),
			\end{aligned}
			\end{equation}
			parametrized by $\{(a_k,b_k,c_k,d_k)\}_k$. The motivation for such a shape is simply to allow all the information available at $x_k$ to be used, and the Kronecker product with the identity ``$\cdot \otimes I_d$'' corresponds to requiring the potential function to be \emph{isotropic} in the spaces of coordinates and gradients; in other words, the potentials can be written
			\begin{align*}
			\phi_k^f= a_k\, \normsq{x_k-x_\star}+b_k\,\normsq{f'(x_k)}+2c_k\,\inner{f'(x_k)}{x_k-x_\star}+ d_k\, (f(x_k)-f_\star).
			\end{align*}		
			Now let us arbitrarily choose $\phi_0^f=L^2\normsq{x_0-x_\star}$ and $\phi_N^f=b_N\, \normsq{f'(x_N)}$. The main motivation underlying this choice is that this structure may result in $\normsq{f'(x_N)} \leq \tfrac{ L^2\normsq{x_0-x_\star}}{b_N}$ using similar arguments as in \secref{sec:Lyap}. For choosing an appropriate sequence $\{(a_k,b_k,c_k,d_k)\}_k$, we propose to solve the following problem:
			\begin{equation}\label{eq:GM_lyap}
			b_N^{\text{(opt)}}=\max_{\phi_1^f,\hdots,\phi_{N-1}^f,b_N} b_N \text{ subject to } (\phi_0^f,\phi_1^f)\in\V_0,\hdots,(\phi_{N-1}^f,\phi_N^f)\in\V_{N-1}.
			\end{equation}
			This problem can be formulated via semidefinite programming (SDP) with $N$ LMIs of size $3\times 3$ (see \appref{sec:GM_app})\edit{, and allows recovering the largest possible valid $b_N$ given structure~\eqref{eq:pot_GD}, $\phi_0^f$ and $\phi_N^f$}. Based on numerical inspection (details hereafter), one can find the following valid sequence of potentials: $$\phi_k^f(x_k) = (2k+1) L (f(x_k)-f_\star) + k (k+2) \normsq{f'(x_k)}+L^2\normsq{x_k-x_\star},$$
			which directly allows to prove both $f(x_k)-f_\star=\bO(k^{-1})$ and $\normsq{f'(x_k)}=\bO(k^{-2})$ simultaneously. Although simple, this is apparently the first time that convergence in gradient norm is proved using standard techniques that are usually used only for function values: the typical technique for obtaining this rate of convergence for gradient norm was presented by~\cite{nesterov2012make}. Let us briefly describe how such a potential can be obtained (the steps can be followed on Figure~\ref{fig:GM}). 
			\begin{enumerate}
				\item We started by numerically solving~\eqref{eq:GM_lyap} for a few values of $N$ using standard packages~\citep{Article:Yalmip,Article:Mosek}. Approximate numerical results are provided below:
				\begin{center}
					{\renewcommand{\arraystretch}{1.3}
						\begin{tabular}{@{}cccccccc@{}}
							\specialrule{2pt}{1pt}{1pt}
							$N=$	&  $1$  & $2$ 	&  $3$ 	& $4$ & $5$ & $\hdots$ 	& $100$ \\
							$b_N^{\text{(opt)}}=$ 	&  $4$ 	& $9$	&  $16$ & $25$& $36$& $\hdots$	& $10201$ \\
							\specialrule{2pt}{1pt}{1pt}
						\end{tabular}
					}
				\end{center}
				The solution to~\eqref{eq:GM_lyap} numerically appeared to match $b_N=(N+1)^2$. For completeness, note that the relative inaccuracy $b_N^{\text{(opt)}}/(N+1)^2-1$ observed on the numerical solutions appeared to be an increasing function of $N$ ranging from $10^{-8}$ for small values of $N$ to $10^{-5}$ for $N=100$. Readers interested in transforming those numerics into formal proofs may proceed with steps~2--4.
				\item Observe schedules of $\{(a_k,b_k,c_k,d_k)\}_k$ that are numerically obtained by solving~\eqref{eq:GM_lyap}. The result for $N=100$ is given in Figure~\ref{fig:GM} (plain brown).
				\item Try to simplify $\{\phi_k^f\}_k$ without loosing too much on $b_N^{\text{(opt)}}$ (i.e., keep $b_N^{\text{(opt)}}$ large enough). As examples, the first numerical schedules motivated trying to solve~\eqref{eq:GM_lyap} under the additional constraint $d_k=(2k+1)L$ (Figure~\ref{fig:GM}, dashed red), then we additionally tried $c_k=0$ and $a_k=L^2$ (Figure~\ref{fig:GM}, dashed blue). Those two simplifications turned out to be successful; an example of unsuccessful one is obtained by constraining $d_k=0$ in~\eqref{eq:GM_lyap} (Figure~\ref{fig:GM}, dashed purple), which does not allow achieving a large enough value of $b_N$ for proving $\bO(k^{-2})$ convergence in $\normsq{f'(x_k)}$.
				\item Using the numerical inspirations, study one step of the method, i.e., find a feasible point to~\eqref{eq:tt}.
				\begin{theorem}\label{thm:GM_Lyap}
					Let $x_k\in\Rd$, $f\in\FL$, and $x_{k+1}=x_k-\tfrac1L f'(x_k)$. The inequality $\phi_{k+1}^f(x_{k+1})\leq \phi_k^f(x_{k})$ holds with 
					\[\phi_k^f=d_k (f(x_k)-f_\star)+b_k\normsq{f'(x_k)}+ L^2\normsq{x_k-x_\star},\]
					and all values $b_k,d_k\geq0$ such that $b_{k+1}=2+\tfrac{d_{k}}{L}+b_k$ and $d_{k+1}= 2 L+d_k$.
				\end{theorem}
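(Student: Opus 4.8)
The plan is to verify the one-step inequality $\phi_{k+1}^f(x_{k+1}) \leq \phi_k^f(x_k)$ directly, by expanding both sides and using the standard $L$-smoothness interpolation inequalities. Write $g := f'(x_k)$, $d_x := x_k - x_\star$, $\Delta := f(x_k) - f_\star$, and note $x_{k+1} = x_k - \tfrac1L g$, so $x_{k+1} - x_\star = d_x - \tfrac1L g$ and $f'(x_{k+1}) =: g_+$. First I would write out
\[
\phi_{k+1}^f(x_{k+1}) = d_{k+1}(f(x_{k+1}) - f_\star) + b_{k+1}\normsq{g_+} + L^2 \normsq{d_x - \tfrac1L g},
\]
expand the last term as $L^2\normsq{d_x} - 2L\inner{g}{d_x} + \normsq{g}$, and subtract $\phi_k^f(x_k) = d_k \Delta + b_k \normsq{g} + L^2\normsq{d_x}$. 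The $L^2\normsq{d_x}$ terms cancel, and after substituting $b_{k+1} = 2 + \tfrac{d_k}{L} + b_k$ and $d_{k+1} = 2L + d_k$ the difference becomes a combination of: $d_{k+1}(f(x_{k+1}) - f_\star)$, $(2 + \tfrac{d_k}{L})\normsq{g}$, $b_{k+1}\normsq{g_+}$, $-2L\inner{g}{d_x}$, and $-d_k\Delta$.

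The key is then to bound this combination above by $0$ using the two smoothness/convexity inequalities that hold for any $f \in \FL$: (a) $f(x_{k+1}) - f_\star \leq f(x_k) - f_\star - \tfrac1L\inner{g}{g} + \tfrac1{2L}\normsq{g}$ — more precisely the descent lemma $f(x_{k+1}) \leq f(x_k) - \tfrac1{2L}\normsq{g}$ — together with (b) the convexity-type bound $f(x_k) - f_\star \leq \inner{g}{d_x} - \tfrac1{2L}\normsq{g}$ (which is the interpolation inequality between $x_k$ and $x_\star$ using $f'(x_\star) = 0$), and possibly (c) $\normsq{g_+} \leq \normsq{g} - $ (something nonnegative), coming from cocoercivity $\inner{g - g_+}{x_k - x_{k+1}} \geq \tfrac1L\normsq{g - g_+}$, i.e. $\tfrac1L\inner{g}{g - g_+} \geq \tfrac1L\normsq{g-g_+}$. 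Using (b) to replace $-d_k\Delta$ by $-d_k\inner{g}{d_x} + \tfrac{d_k}{2L}\normsq{g}$ produces the coefficient $-(2L + d_k)\inner{g}{d_x} = -d_{k+1}\inner{g}{d_x}$; using the descent lemma on the $d_{k+1}(f(x_{k+1}) - f_\star)$ term and then (b) again on $f(x_k) - f_\star$ cancels this inner product against the $-d_{k+1}\inner{g}{d_x}$, leaving only $\normsq{g}$ and $\normsq{g_+}$ terms, which one closes with the cocoercivity inequality and the fact that $b_k, d_k \geq 0$.

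The main obstacle I anticipate is bookkeeping: getting the coefficients of $\normsq{g}$, $\normsq{g_+}$, and $\inner{g}{d_x}$ (and $\inner{g}{g_+}$) to line up exactly so that the residual is a nonnegative multiple of $\normsq{g - g_+}$ plus nonnegative multiples of the individual interpolation-inequality slacks — rather than discovering a sign error that forces a different recursion. Concretely, one should write the target as a nonnegative linear combination $\lambda_1 \cdot (\text{ineq between } x_k, x_\star) + \lambda_2 \cdot (\text{ineq between } x_{k+1}, x_k) + \lambda_3 \cdot (\text{ineq between } x_k, x_{k+1}) + \dots$ with multipliers depending affinely on $b_k, d_k, L$, and check it is an identity. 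This is exactly the ``feasible point to~\eqref{eq:tt}'' that step~4 of the recipe calls for; since Proposition~\ref{prop:verify_potential} guarantees the LMI is feasible whenever the bound is provable, and the numerics in step~1 confirmed $b_N = (N+1)^2$ (consistent with $b_{k+1} = b_k + 2 + d_k/L$, $d_k = 2kL + d_0$, $d_0 = L$ giving $b_k = k^2 + 2k + b_0$), the combination is guaranteed to exist; the work is purely to exhibit the multipliers explicitly. Once the one-step inequality is established, the theorem statement (which asserts only that single inequality) is complete, with no telescoping argument needed at this stage.
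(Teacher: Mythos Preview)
Your final plan—write $\phi_{k+1}^f-\phi_k^f$ as a nonnegative combination of the interpolation inequalities between the three points $x_\star,x_k,x_{k+1}$—is exactly the paper's approach, and the paper supplies the multipliers you leave as bookkeeping: $\lambda_1=2L$ on $f_\star\geq f(x_k)+\inner{g}{x_\star-x_k}+\tfrac{1}{2L}\normsq{g}$, $\lambda_2=2d_k+2L(2+b_k)$ on $f(x_k)\geq f(x_{k+1})+\inner{g_+}{x_k-x_{k+1}}+\tfrac{1}{2L}\normsq{g_+-g}$, and $\lambda_3=2L(1+b_k)+d_k$ on $f(x_{k+1})\geq f(x_k)+\inner{g}{x_{k+1}-x_k}$; the weighted sum is then an exact identity for $\phi_{k+1}^f(x_{k+1})-\phi_k^f(x_k)$.

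One caution about your intermediate reasoning, though: the step ``use (b) to replace $-d_k\Delta$ by $-d_k\inner{g}{d_x}+\tfrac{d_k}{2L}\normsq{g}$'' goes the wrong way, since (b) upper-bounds $\Delta$ and hence \emph{lower}-bounds $-d_k\Delta$. And the descent lemma alone is too weak: applying (b) with weight $2L$ and then $f(x_{k+1})\leq f(x_k)-\tfrac{1}{2L}\normsq{g}$ with weight $d_{k+1}$ leaves the residual $-(b_k+1+\tfrac{d_k}{2L})\normsq{g}+(b_k+2+\tfrac{d_k}{L})\normsq{g_+}$, whose $\normsq{g_+}$-coefficient exceeds the $\normsq{g}$-coefficient by $1+\tfrac{d_k}{2L}$, so $\normsq{g_+}\leq\normsq{g}$ does not close it. You need the full interpolation inequality from $x_{k+1}$ to $x_k$ (your (c) is in the right spirit but cocoercivity is the \emph{sum} of the two directional inequalities and loses the function-value information), together with plain convexity from $x_k$ to $x_{k+1}$, exactly as in the $\lambda_2,\lambda_3$ combination above.
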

				The proof is relatively simple relies on finding a sequence of feasible points to~\eqref{eq:tt}. The proof is delayed to \appref{sec:GM_app}; in particular, the choice $d_k=(2k+1)L$ and $b_k=k(k+2)$ is valid for using Theorem~\ref{thm:GM_Lyap}, and allows recovering the previous claim. A corresponding result for the proximal gradient method is presented in~\appref{sec:pgm}, and similar results for accelerated variants are given in \secref{sec:opt_smooth}.
			\end{enumerate}
			\begin{remark}\label{rem:polynomialcoefficients}
				Instead of using $\{(a_k,b_k,c_k,d_k)\}_k$ as variables, one could explicitly require each of those to be a polynomial in $k$, and use the coefficients of those polynomials as variables in~\eqref{eq:GM_lyap}. Our choice of using $\{(a_k,b_k,c_k,d_k)\}_k$ allows knowing in advance that the optimal $b_N$ value obtained by solving~\eqref{eq:GM_lyap} is the best value of $b_N$ that can be certified given $\phi_0^f$ and the structure of $\phi_k^f$. It could nevertheless be more practical to use the polynomials' coefficients as variables in certain situations. 
			\end{remark}
			\begin{center}
				\begin{figure}[!ht]
					\begin{tabular}{r}
						\hspace{-.5cm}
						\begin{tikzpicture}
						\begin{axis}[plotOptions, title={$a_k$}, ymin=0, ymax=2,xmin=0,xmax=102,width=.285\linewidth]
						\addplot [colorP5] table [x=k,y=ak] {Data/GD_FG1.dat};
						\addplot [colorP2, dashed] table [x=k,y=ak] {Data/GD_FG2.dat};
						\addplot [colorP1, dashed] table [x=k,y=ak] {Data/GD_FG3.dat};
						\addplot [colorP3, dashed] table [x=k,y=ak] {Data/GD_FG4.dat};
						\end{axis}
						\end{tikzpicture} \hspace{-.3cm}
						\begin{tikzpicture}
						\begin{axis}[plotOptions, title={$b_k$}, ymin=-40, ymax=10000,xmin=0,xmax=102,width=.285\linewidth]
						\addplot [colorP5] table [x=k,y=bk] {Data/GD_FG1.dat};
						\addplot [colorP2, dashed] table [x=k,y=bk] {Data/GD_FG2.dat};
						\addplot [colorP1, dashed] table [x=k,y=bk] {Data/GD_FG3.dat};
						\addplot [colorP3, dashed] table [x=k,y=bk] {Data/GD_FG4.dat};
						\end{axis}
						\end{tikzpicture}\hspace{-.3cm}
						\begin{tikzpicture}
						\begin{axis}[plotOptions, title={$c_k$}, ymin=-10, ymax=10,xmin=0,xmax=102,width=.285\linewidth]
						\addplot [colorP5] table [x=k,y=ck] {Data/GD_FG1.dat};
						\addplot [colorP2, dashed] table [x=k,y=ck] {Data/GD_FG2.dat};
						\addplot [colorP1, dashed] table [x=k,y=ck] {Data/GD_FG3.dat};
						\addplot [colorP3, dashed] table [x=k,y=ck] {Data/GD_FG4.dat};
						\end{axis}
						\end{tikzpicture} \hspace{-.3cm}
						\begin{tikzpicture}
						\begin{axis}[plotOptions, title={$d_k$}, ymin=-10, ymax=200,xmin=0,xmax=102,width=.285\linewidth]
						\addplot [colorP5] table [x=k,y=dk] {Data/GD_FG1.dat};
						\addplot [colorP2, dashed] table [x=k,y=dk] {Data/GD_FG2.dat};
						\addplot [colorP1, dashed] table [x=k,y=dk] {Data/GD_FG3.dat};
						\addplot [colorP3, dashed] table [x=k,y=dk] {Data/GD_FG4.dat};
						\end{axis}
						\end{tikzpicture}
					\end{tabular}\vspace{-.35cm}
					\caption{Numerical solution to~\eqref{eq:GM_lyap} for $N=100$ and $L=1$ (plain brown), forced $d_k=(2k+1)L$ (dashed red), forced $d_k=(2k+1)L$, $c_k=0$ and $a_k=L^2$ (dashed  blue) and forced $d_k=0$ (dashed purple). \edit{Total time: $\sim35$ sec{.} on single core of Intel Core i$7$ $1.8$GHz~CPU.}}\label{fig:GM}
				\end{figure}
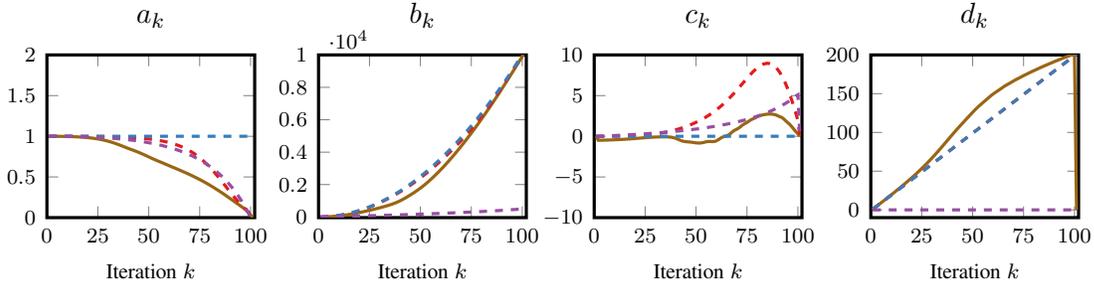
			\end{center}
			\subsubsection{Example II: stochastic smooth convex minimization, bounded variance}\label{sec:boundedVar}
			
			For studying stochastic methods, we heavily rely on the finite support assumption of the random variable $i_k\in I$. This is crucial, as our main tool is a reformulation of~\eqref{eq:tt} into a LMI where we perform an averaging over the $n=|I|$ possible scenarios.
			
			\sloppy Stochastic methods are commonly studied using a uniformly bounded variance assumption $\E_i\normsq{G(x;i)-f'(x)}$ over $x$~\citep{hu2009accelerated,xiao2010dual,lan2012optimal,devolder2011stochastic}. This assumption is quite restrictive, but analyses not relying on it often appear to be much more challenging and sometimes even out of reach so far. Nevertheless, this restrictive setting is used for the examples of \edit{this section}. Other setups are explored in \secref{sec:zeroVarOpt} and in appendix.
			
			In the previous section, the use of Proposition~\ref{prop:verify_potential} was exemplified for designing a potential function for vanilla gradient method. In the following lines, we provide two alternate ways of choosing sequences of potentials that can be used for stochastic first-order methods, the main additional difficulty being the appearance of a \emph{variance} term $e_k$ in the inequality~\eqref{eq:pot}. Final consequences of the results of this section are depicted in Table~\ref{tab:bv_best} for when (decreasing) step-sizes of the form $\delta_k=\left({L(1+k)^\alpha}\right)^{-1}$ are used in the stochastic algorithms. For SGD with and without averaging we discuss the differences with~\citet{moulines2011non} below.

			\paragraph{Stochastic gradient.}\label{sec:ex_sgd} Let us apply the methodology to the SGD iteration \begin{equation*}
			\begin{aligned}
			x_{k+1}^{({i_k})}&=x_k - \delta_k G(x_k;{i_k}),
			\end{aligned}
			\end{equation*} {\color{red}(with $\E_{i_k}G(x_k;i_k)=f'(x_k)$)} where we choose the following family of potentials:
			\begin{align*}
			\phi_k^f=\begin{pmatrix}x_k-x_\star\\ f'(x_k)\end{pmatrix}^\top \left[\begin{pmatrix}
			a_k & c_k \\ c_k & b_k
			\end{pmatrix}\otimes I_d\right]\begin{pmatrix}x_k-x_\star\\ f'(x_k)\end{pmatrix} + d_k\, (f(x_k)-f_\star).
			\end{align*}
			For choosing the sequence $\{(a_k,b_k,c_k,d_k,e_k)\}_k$, we arbitrarily start with $\phi^f_0=\tfrac{L}{2}\normsq{x_0-x_\star}$ and $\phi^f_N=d_N (f(x_N)-f_\star)$ as this may result in a guarantee of the form $\E[f(x_N)-f_\star] \leq \tfrac{L\, \normsq{x_0-x_\star}}{2d_N}+\sigma^2\tfrac{\sum_{k=0}^{N-1}e_k}{d_N}$. We proceed with a two-stage strategy:
			\begin{align}
			&d_N^{(\mathrm{opt})}=\max_{\substack{\phi_1^f,\hdots,\phi_{N-1}^f,d_N\\e_0,\hdots,e_{N-1}}} d_N\,\text{ s.t. }\, (\phi_0^f,\phi_1^f,e_0)\in\V_0,\hdots,(\phi_{N-1}^f,\phi_N^f,e_{N-1})\in\V_{N-1},\notag\\
			&\min_{\substack{\phi_1^f,\hdots,\phi_{N-1}^f,\\d_N,e_0,\hdots,e_{N-1}}} \sum_{k=1}^{N} e_k \,\text{ s.t. }\, d_N=d_N^{(\mathrm{opt})},\, (\phi_0^f,\phi_1^f,e_0)\in\V_0,\hdots,(\phi_{N-1}^f,\phi_N^f,e_{N-1})\in\V_{N-1},\label{eq:SGD_Lyap}
			\end{align}
			where the sequence is chosen as the optimal solution to~\eqref{eq:SGD_Lyap}\edit{, which is formulated using $N$ LMIs of sizes $(2n+1)\times(2n+1)$. Alternatively, one can choose two weights $R^2$ and $\sigma^2$ and solve}
			\begin{align*}
			\min_{V_1,\hdots,V_{N-1},d_N} \tfrac{R^2}{d_N}+\tfrac{\sigma^2}{d_N}\sum_{k=1}^Ne_k \text{ subject to } (\phi_0^f,\phi_1^f,e_0)\in\V_0,\hdots,(\phi_{N-1}^f,\phi_N^f,e_{N-1})\in\V_{N-1},
			\end{align*}
			which is also convex. As in the case of the gradient method, let us briefly describe the steps.
			\begin{enumerate}
				\item Solve~\eqref{eq:SGD_Lyap} for the fixed step-size policy $\delta_k=\tfrac1L$ and for a few values of $N$ (number of iterations) and $n$ (cardinality of $\{G(x;1),\hdots,G(x;n)\}$). Approximate numerical results are as follow:
				\begin{center}
					{\renewcommand{\arraystretch}{1.3}
						\begin{tabular}{@{}c|ccccccc|ccccccc@{}}
							\specialrule{2pt}{1pt}{1pt}
							$n=$	& \multicolumn{7}{c}{$2$}										& \multicolumn{7}{c}{$10$} 			\\
							$N=$	&  $1$  & $2$ 	&  $3$ 		& $4$	&  $5$	& $\hdots$ 	&$100$ 	&  $1$  & $2$ 	&  $3$ 		&  	$4$	&  $5$	& $\hdots$ 	& $100$\\
							$d_N^{\text{(opt)}}=$ 	&  $3$ 	& $5$	&  $7$ 	 	& $9$	& $11$ 	& $\hdots$	&$201$  & $3$  	& $5$ 	&$7$   	 	& $9$ 	&$11$  	& $\hdots$	& $201$\\
							$e_N^{\text{(opt)}}=$ 	&  $1.5$& $2.5$	&  $3.5$	& $4.5$	& $5.5$	& $\hdots$	&$100.5$& $1.5$	&$2.5$	&$3.5$ 	 	& $4.5$	&$5.5$ 	& $\hdots$	& $100.5$\\
							\specialrule{2pt}{1pt}{1pt}
						\end{tabular}
					}
				\end{center}
				\item Observe schedules $\{(a_k,b_k,c_k,d_k,e_k)\}_k$ that are numerically obtained by solving~\eqref{eq:SGD_Lyap}. The result for $N=100$ and $n=2$ is given in Figure~\ref{fig:sgd_BV_example} (plain brown).
				\item Simplify $\{\phi_k^f\}_k$ as much as possible without loosing too much (i.e., keep $d_N$ large and $e_N$ small). For example, enforcing $c_k=0$ in~\eqref{eq:SGD_Lyap} (Figure~\ref{fig:sgd_BV_example}, dashed red), enforcing $b_k=c_k=0$ in~\eqref{eq:SGD_Lyap} (Figure~\ref{fig:sgd_BV_example}, dashed blue) and finally enforcing $b_k=c_k=0$ and $a_k=\tfrac{L}{2}$ in ~\eqref{eq:SGD_Lyap} (Figure~\ref{fig:sgd_BV_example}, dashed purple). 
				\item Using numerical inspiration, study one step of the method, i.e., find a feasible point to~\eqref{eq:tt}.
			\end{enumerate}
			\begin{theorem}\label{thm:sgd_BV} Let $x_k\in\Rd$, $f\in\FL$ and $x_{k+1}^{({i_k})}=x_k - \delta_k G(x_k;{i_k})$ with $\E_{i_k} \normsq{G(x_k;{i_k})}\leq \sigma^2+\normsq{f'(x_k)}$ {\color{red}(and $\E_{i_k} G(x_k;i_k)=f'(x_k)$)}. The inequality $\E_{i_k} [\phi^f_{k+1}(x_{k+1}^{({i_k})})]\leq \phi_k^f(x_k)+e_k \sigma^2$ holds with
				\[\phi^f_k(x_k)= d_{k} (f(x_k)-f_\star)+\tfrac{L}{2}\normsq{x_k-x_\star},\]
				for all values $d_k, \delta_k\geq0$ such that $d_{k+1}=d_k+\delta_k L$, $e_k= \tfrac{\delta_k^2 L}{2}(1+d_{k+1})$, and $\delta_k d_{k+1} \geq e_k$ (i.e., when step-size $\delta_k$ is small enough; in particular, the choice $0\leq \delta_k\leq \tfrac1L$ is valid).
			\end{theorem}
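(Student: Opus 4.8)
The plan is to exhibit a feasible point of the one-step verification problem~\eqref{eq:tt}, i.e., to prove the claimed inequality directly by combining three elementary facts about $f\in\FL$ and the SGD step, and then bookkeeping the coefficients. Write $g_k:=f'(x_k)$. First I would expand, using $x_{k+1}^{({i_k})}=x_k-\delta_k G(x_k;{i_k})$,
\[\phi^f_{k+1}(x_{k+1}^{({i_k})})= d_{k+1}\big(f(x_{k+1}^{({i_k})})-f_\star\big)+\tfrac{L}{2}\normsq{x_{k+1}^{({i_k})}-x_\star},\]
bound the function-value term by $L$-smoothness, $f(x_{k+1}^{({i_k})})\leq f(x_k)-\delta_k\inner{g_k}{G(x_k;{i_k})}+\tfrac{L\delta_k^2}{2}\normsq{G(x_k;{i_k})}$, and expand the distance term exactly, $\normsq{x_{k+1}^{({i_k})}-x_\star}=\normsq{x_k-x_\star}-2\delta_k\inner{G(x_k;{i_k})}{x_k-x_\star}+\delta_k^2\normsq{G(x_k;{i_k})}$. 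Taking $\E_{i_k}$ and using unbiasedness $\E_{i_k}G(x_k;{i_k})=g_k$ together with the second-moment bound $\E_{i_k}\normsq{G(x_k;{i_k})}\leq \sigma^2+\normsq{g_k}$ turns all the oracle terms into expressions in $g_k$, $x_k$, and $\sigma^2$.

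Next I would absorb the growth of $d_k$: since $d_{k+1}-d_k=\delta_k L$, I write $d_{k+1}(f(x_k)-f_\star)=d_k(f(x_k)-f_\star)+\delta_k L\,(f(x_k)-f_\star)$ and bound the last summand with convexity, $f(x_k)-f_\star\leq \inner{g_k}{x_k-x_\star}$. After this substitution the coefficients line up cleanly: the $\inner{g_k}{x_k-x_\star}$ terms cancel ($+\delta_k L$ from convexity against $-L\delta_k$ from the distance expansion); the coefficients of $f(x_k)-f_\star$ and of $\tfrac{L}{2}\normsq{x_k-x_\star}$ match exactly the target $\phi_k^f(x_k)$; the $\sigma^2$ terms collect to $\tfrac{L\delta_k^2}{2}(1+d_{k+1})\sigma^2=e_k\sigma^2$, precisely the right-hand side; and the residual $\normsq{g_k}$ term carries coefficient $-\delta_k d_{k+1}+\tfrac{L\delta_k^2}{2}(1+d_{k+1})=e_k-\delta_k d_{k+1}$, which is $\leq 0$ by hypothesis and can therefore be discarded.

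The only step needing a short argument is the sign condition $\delta_k d_{k+1}\geq e_k$ and the claim that $0\leq\delta_k\leq\tfrac1L$ suffices: from $d_k\geq 0$ we get $d_{k+1}=d_k+\delta_k L\geq \delta_k L$, whence
\[\delta_k d_{k+1}-e_k=\tfrac{\delta_k}{2}\big(d_{k+1}(2-\delta_k L)-\delta_k L\big)\geq \tfrac{\delta_k}{2}\big(\delta_k L(2-\delta_k L)-\delta_k L\big)=\tfrac{\delta_k^2 L}{2}(1-\delta_k L)\geq 0\]
as soon as $\delta_k L\leq 1$ (the intermediate inequality also uses $2-\delta_k L\geq 0$). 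I expect this coefficient-tracking to be the whole of the work; everything else is the standard smoothness/convexity toolbox, and the real content of the theorem is that the framework of Proposition~\ref{prop:verify_potential} pinpointed the potential $\phi_k^f$, the variance bookkeeping term $e_k$, and the recursion on $(d_k,\delta_k)$ for which these elementary inequalities close. For completeness I would also note at the end that taking $d_0=0$, i.e., $\phi_0^f=\tfrac{L}{2}\normsq{x_0-x_\star}$, and summing the recursion over $k=0,\dots,N-1$ recovers the announced bound $\E[f(x_N)-f_\star]\leq \big(\tfrac{L}{2}\normsq{x_0-x_\star}+\sigma^2\sum_{k=0}^{N-1}e_k\big)\big/ d_N$ with $d_N=L\sum_{k=0}^{N-1}\delta_k$.
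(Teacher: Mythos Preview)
Your proof is correct and is essentially the same as the paper's: the paper presents it as a weighted sum of the three inequalities (convexity at $x_k$ with weight $\delta_k L$, the descent lemma with weight $d_{k+1}$, and the variance bound with weight $e_k$), while you carry out the same combination by direct substitution and coefficient tracking. The ingredients, the multipliers, and the residual term $(e_k-\delta_k d_{k+1})\normsq{g_k}$ all match; your additional verification that $\delta_k\leq 1/L$ implies $\delta_k d_{k+1}\geq e_k$ is a detail the paper leaves to the reader.
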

			The proof is provided in \appref{sec:boundedvariances}. The choice $\delta_k=\tfrac1L$, $d_k=k$ and $e_k=\tfrac{1}{L}\left(\tfrac{k}2+1\right)$ leads to \begin{equation*}
			\begin{aligned}
			(k+1)\E_{i_k}[f(x_{k+1}^{({i_k})})-f_\star]+&\tfrac{L}{2}\E_{i_k}[\normsq{x_{k+1}^{({i_k})}-x_\star}]\leq k \left[f(x_{k})-f_\star\right]+\tfrac{L}{2}[\normsq{x_{k}-x_\star}] + (\tfrac{k}{2}+1)\tfrac{\sigma^2}L,
			\end{aligned}
			\end{equation*}
			the results are shown on Figure~\ref{fig:sgd_BV_example}, whose dashed purple curves correspond to
			\[N\E[f(x_{N})-f_\star]+\tfrac{L}{2}\E[\normsq{x_{N}-x_\star}]\leq \tfrac{L}{2}\normsq{x_{0}-x_\star} + \tfrac{N}{4} (N+3)\tfrac{\sigma^2}L.\]
			The choice $\delta_k=(L(1+k)^{\alpha})^{-1} $ leads to the results provided in Table~\ref{tab:bv_best} (details in \appref{sec:BV_rates}). Compared to~\citet{moulines2011non} ---which deals with the slightly different setting $f_i\in\FL$ and bounded variance at $x_\star$: $\E_i\normsq{f_i'(x_\star)}\leq\sigma^2_\star$--- we obtained bounds that are valid for all values of~$\alpha$, compared to $\alpha\in(1/2,1)$, and the same optimal value $\alpha=2/3$. Although the scope of this new result is more limited through the assumptions, the proof is considerably simpler.
			\begin{center}
				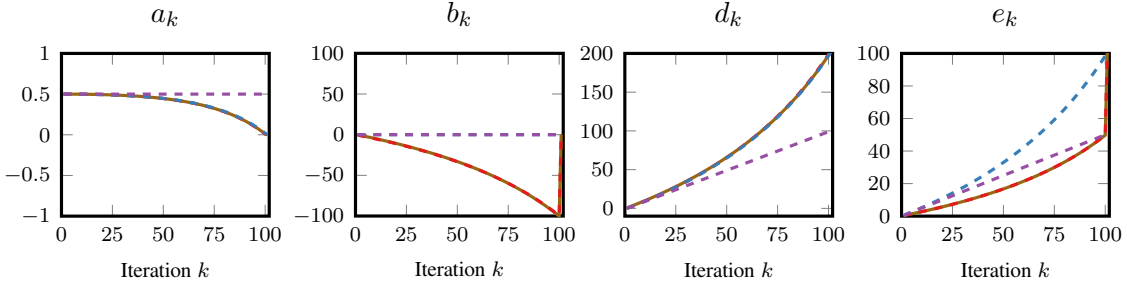
\begin{figure}[!t]
					\begin{tabular}{r}
						\hspace{-.5cm}
						\begin{tikzpicture}
						\begin{axis}[plotOptions, title={$a_k$}, ymin=-1, ymax=1,xmin=0,xmax=102,width=.285\linewidth]
						\addplot [colorP5] table [x=k,y=ak] {Data/SGD_F0.dat};
						\addplot [colorP2, dashed] table [x=k,y=ak] {Data/SGD_F1.dat};
						\addplot [colorP1, dashed] table [x=k,y=ak] {Data/SGD_F2.dat};
						\addplot [colorP3, dashed] table [x=k,y=ak] {Data/SGD_F3.dat};
						\end{axis}
						\end{tikzpicture} \hspace{-.3cm}
						\begin{tikzpicture}
						\begin{axis}[plotOptions, title={$b_k$}, ymin=-100, ymax=100,xmin=0,xmax=102,width=.285\linewidth]
						\addplot [colorP5] table [x=k,y=bk] {Data/SGD_F0.dat};
						\addplot [colorP2, dashed] table [x=k,y=bk] {Data/SGD_F1.dat};
						\addplot [colorP1, dashed] table [x=k,y=bk] {Data/SGD_F2.dat};
						\addplot [colorP3, dashed] table [x=k,y=bk] {Data/SGD_F3.dat};
						\end{axis}
						\end{tikzpicture}\hspace{-.3cm}
						\begin{tikzpicture}
						\begin{axis}[plotOptions, title={$d_k$}, ymin=-10, ymax=200,xmin=0,xmax=102,width=.285\linewidth]
						\addplot [colorP5] table [x=k,y=dk] {Data/SGD_F0.dat};
						\addplot [colorP2, dashed] table [x=k,y=dk] {Data/SGD_F1.dat};
						\addplot [colorP1, dashed] table [x=k,y=dk] {Data/SGD_F2.dat};
						\addplot [colorP3, dashed] table [x=k,y=dk] {Data/SGD_F3.dat};
						\end{axis}
						\end{tikzpicture} \hspace{-.3cm}
						\begin{tikzpicture}
						\begin{axis}[plotOptions, title={$e_k$}, ymin=0, ymax=100,xmin=0,xmax=102,width=.285\linewidth]
						\addplot [colorP5] table [x=k,y=ek] {Data/SGD_F0.dat};
						\addplot [colorP2, dashed] table [x=k,y=ek] {Data/SGD_F1.dat};
						\addplot [colorP1, dashed] table [x=k,y=ek] {Data/SGD_F2.dat};
						\addplot [colorP3, dashed] table [x=k,y=ek] {Data/SGD_F3.dat};
						\end{axis}
						\end{tikzpicture} 
					\end{tabular}
					\caption{Numerical solution to~\eqref{eq:SGD_Lyap} for $n=2$, $N=100$ and $L=1$ (plain brown), forced $c_k=0$ (dashed red), forced $c_k=0$, $b_k=0$ (dashed blue) and forced $c_k=0$, $b_k=0$ and $a_k=\tfrac{L}{2}$ (dashed purple). \edit{Total time: $\sim60$ sec{.} on single core of Intel Core i$7$ $1.8$GHz CPU.}}\label{fig:sgd_BV_example}
				\end{figure}
			\end{center}
			\paragraph{Stochastic gradient with averaging.}\label{sec:ex_avg_sgd} A standard way to improve theoretical guarantees of stochastic gradient methods is to embed them within an averaging process, as in~\citet{ruppert1988efficient,polyak1992acceleration,moulines2011non}, leading to the following iterative process:
			\begin{equation*}
			\begin{aligned}
			x_{k+1}^{({i_k})}&=x_k - \delta_k G(x_k;{i_k}),\\
			z_{k+1}^{({i_k})}&=\tfrac{1}{k+1}\,x_{k+1}^{({i_k})}+\tfrac{k}{k+1}\, z_k.
			\end{aligned}
			\end{equation*} 
			The following potential was found using a procedure similar to the ones presented for gradient and stochastic gradient methods. In contrast with previous examples, this bound can only be propagated for decreasing step-sizes (i.e., $\delta_{k+1}\leq \delta_k$). In particular, the choice $\delta_k=(L(1+k)^{\alpha})^{-1} $ leads to the results provided in Table~\ref{tab:bv_best} (details in \appref{sec:BV_rates}). Compared to~\citet{moulines2011non}, as for SGD, we obtain the same optimal value $\alpha=1/2$ for averaging, and bounds that are valid for all values of $\alpha$ (compared to only $\alpha\in(1/2,1)$ before). The proof is delayed to \appref{sec:avgsgd_pot}\edit{, and relied on using $N$ LMIs of sizes $(3+2n)\times (3+2n)$.}
			\begin{theorem}\label{thm:avg_BV}
				Consider the following iterative scheme
				\begin{equation*}
				\begin{aligned}
				x_{k+1}^{({i_k})}&=x_k-\delta_k G(x_k;{i_k}),\\
				z^{({i_k})}_{k+1}&=\tfrac{1}{d_k+1}\,x_{k+1}^{({i_k})}+\tfrac{d_k}{d_k+1}\, z_k,
				\end{aligned}
				\end{equation*}
				for some $d_k, \delta_k\geq0$. Assuming $f\in\FL$ and $\E_{i_k} \normsq{G(x_k;{i_k})}\leq \sigma^2+\normsq{f'(x_k)}$ {\color{red}(and $\E_{i_k} G(x_k;i_k)=f'(x_k)$)}, the following inequality holds
				\[\delta_k d_{k+1}L\E_{i_k}[f(z^{({i_k})}_{k+1})-f_\star]+\tfrac{L}{2}\E_{i_k}\normsq{x_{k+1}^{({i_k})}-x_\star}\leq \delta_k d_{k}L (f(z_k)-f_\star)+\tfrac{L}{2}\normsq{x_k-x_\star}+e_{k}\sigma^2,\]
				with $d_{k+1}= d_k+1$, $e_{k}=\tfrac{\delta_k^2}{2} \tfrac{L(1+d_k+L\delta_k)}{1+d_k}$ and $\delta_k \leq \tfrac{1+\sqrt{5}}{2L}$. 
			\end{theorem}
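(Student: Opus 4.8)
The plan is to instantiate Proposition~\ref{prop:verify_potential}: since this is a one-step statement, it suffices to exhibit a feasible point of~\eqref{eq:tt}, that is, to certify that $(\text{right-hand side})-(\text{left-hand side})$ of the claimed inequality is a nonnegative linear combination of the valid inequalities available in this setting. These are the $\FL$-interpolation inequalities (convexity together with the $\tfrac1{2L}\normsq{\cdot}$ refinement) relating the values and gradients of $f$ at the points $x_\star$, $x_k$, $z_k$ and the sampled averages $z_{k+1}^{({i_k})}$; the (assumed) unbiasedness $\E_{i_k} G(x_k;{i_k})=f'(x_k)$; and the noise-model inequality $\E_{i_k}\normsq{G(x_k;{i_k})}\leq \sigma^2+\normsq{f'(x_k)}$. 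As advocated in \secref{sec:designpotentials}, the multipliers can first be read off from the numerical solution of the associated $(3+2n)\times(3+2n)$ SDP and then turned into an explicit hand-checkable combination.

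Concretely, I would assemble the certificate from four pieces. First, expand $\tfrac L2\normsq{x_{k+1}^{({i_k})}-x_\star}$ and bound $\E_{i_k}\normsq{G(x_k;{i_k})}$ by the noise model with multiplier $\tfrac L2$; this produces $\tfrac{L\delta_k^2}{2}\sigma^2$, $\tfrac{L\delta_k^2}{2}\normsq{f'(x_k)}$ and $-\delta_k L\inner{f'(x_k)}{x_k-x_\star}$. Second, use the interpolation inequalities $f(x_k)-f_\star\leq \inner{f'(x_k)}{x_k-x_\star}-\tfrac1{2L}\normsq{f'(x_k)}$ and $f(x_k)\geq f(z_k)+\inner{f'(z_k)}{x_k-z_k}+\tfrac1{2L}\normsq{f'(x_k)-f'(z_k)}$, each with multiplier $\delta_k L$, to cancel the function values and the inner product against $x_k-x_\star$, at the price of the favourable quantities $-\tfrac{\delta_k}{2}\normsq{f'(x_k)}$ and $-\tfrac{\delta_k}{2}\normsq{f'(x_k)-f'(z_k)}$. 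Third --- the crux --- handle $f(z_{k+1}^{({i_k})})$ not by plain convexity but by an interpolation inequality anchored at $z_k$, exploiting $z_{k+1}^{({i_k})}-z_k=\tfrac1{d_k+1}\,(x_{k+1}^{({i_k})}-z_k)$. Because the new displacement is shrunk by $\tfrac1{d_k+1}$, the oracle enters the relevant quadratic with weight $\tfrac1{(d_k+1)^2}$; multiplying this inequality by $\delta_k d_{k+1}L=\delta_k(d_k+1)L$ (to match the coefficient of $f(z_{k+1}^{({i_k})})$) turns its $\sigma^2$-contribution into exactly $\tfrac{L^2\delta_k^3}{2(d_k+1)}$, so that together with the $\tfrac{L\delta_k^2}{2}$ from the first piece one recovers $e_k=\tfrac{\delta_k^2}{2}\cdot\tfrac{L(1+d_k+L\delta_k)}{1+d_k}$. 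Fourth, mop up the remaining inner products using unbiasedness and, if needed, cocoercivity between $x_k$ and $z_k$.

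After these substitutions all function values and all inner products against $x_\star$ cancel, the $\sigma^2$-coefficient is $0$ by the choice of $e_k$, and what remains is a quadratic form in $f'(x_k)$, $f'(z_k)$ (and $x_k-z_k$, with $\tfrac1{d_k+1}$-weighted coefficients). The main obstacle is to certify that this residual is negative semidefinite over the whole stated step-size range: it reduces to the positive semidefiniteness of a small matrix affine in $L\delta_k$. Its binding $2\times 2$ block, in the variables $f'(x_k)$ and $f'(z_k)$, has diagonal $(2-L\delta_k,\,1)$ and off-diagonal entry $L\delta_k-1$; its leading principal minors are nonnegative precisely when $L\delta_k\leq 2$ and $1+L\delta_k-L^2\delta_k^2\geq 0$, i.e. $L^2\delta_k^2-L\delta_k-1\leq 0$, which is $\delta_k\leq \tfrac{1+\sqrt 5}{2L}$ --- the golden ratio being the positive root of $t^2=t+1$, and the first condition being implied by the second. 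The delicate bookkeeping is to check that the extra $\tfrac1{d_k+1}$-weighted terms (in particular the $\normsq{x_k-z_k}$ generated by the third piece) do not spoil this; this is why in practice I would take the third inequality as the \emph{tight} interpolation at $z_{k+1}^{({i_k})}$ (which introduces no $\normsq{z_{k+1}^{({i_k})}-z_k}$ term at all) and lean on the tightness guarantee of the SDP formulation: if the numerics predict the combination exists, it does. Finally, chaining the one-step inequality over $k$ with $f'(x_\star)=0$, and specializing $d_k=k$ and $\delta_k=(L(1+k)^\alpha)^{-1}$, yields the rates of Table~\ref{tab:bv_best} after bounding $\sum_k e_k$.
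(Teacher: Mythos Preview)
Your high-level plan (exhibit a nonnegative weighted sum of valid inequalities, then check the residual quadratic is PSD) is exactly the paper's methodology, and your identification of the step-size threshold via the condition $1+L\delta_k-L^2\delta_k^2\ge 0$ is correct. But the concrete combination you sketch does not close, and your proposed fix is inconsistent with the rest of your derivation.

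The gap is in your third piece. If you bound $\E_{i_k}f(z_{k+1}^{(i_k)})$ by the descent lemma anchored at $z_k$,
\[
f(z_{k+1}^{(i_k)})\le f(z_k)+\inner{f'(z_k)}{z_{k+1}^{(i_k)}-z_k}+\tfrac{L}{2}\normsq{z_{k+1}^{(i_k)}-z_k},
\]
then after multiplying by $\delta_k(d_k+1)L$ and expanding $z_{k+1}^{(i_k)}-z_k=\tfrac{1}{d_k+1}(x_k-z_k-\delta_k G(x_k;i_k))$ you indeed recover the second $\sigma^2$-contribution $\tfrac{L^2\delta_k^3}{2(d_k+1)}$ that completes $e_k$. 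But you also produce $+\tfrac{\delta_k L^2}{2(d_k+1)}\normsq{x_k-z_k}$ and a cross term $-\tfrac{\delta_k^2 L^2}{d_k+1}\inner{f'(x_k)}{x_k-z_k}$ that no remaining inequality can absorb: none of the $\FL$-interpolation inequalities between $x_k,z_k,x_\star$ yields a negative multiple of $\normsq{x_k-z_k}$. You noticed this and proposed to switch to the ``tight interpolation at $z_{k+1}^{(i_k)}$'' instead; but that inequality does not contain $\normsq{z_{k+1}^{(i_k)}-z_k}$ at all, so your earlier decomposition of $e_k$ into $\tfrac{L\delta_k^2}{2}+\tfrac{L^2\delta_k^3}{2(d_k+1)}$ collapses. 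The two halves of your argument cannot both be kept.

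What the paper actually does is commit to interpolation inequalities \emph{anchored at $z_{k+1}^{(i_k)}$} (i.e.\ involving $f'(z_{k+1}^{(i_k)})$) from both $x_k$ and $z_k$, with weights $\delta_k L$ and $d_k\delta_k L$ respectively, together with the interpolation from $x_k$ to $x_\star$ (weight $\delta_k L$) and the variance inequality with weight exactly $e_k$. The whole $\sigma^2$-budget sits on this last multiplier; there is no splitting. With these four ingredients the residual factors as a sum of three explicit squared norms, one of which carries the coefficient $\tfrac{\delta_k}{2}\bigl(1+\delta_k L(1-\delta_k L)\bigr)$ and gives the golden-ratio bound. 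The crucial structural point you are missing is that the certificate must bring $f'(z_{k+1}^{(i_k)})$ into play: this extra degree of freedom is what lets the residual close without any $\normsq{x_k-z_k}$ leftover.
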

			
			\paragraph{Stochastic gradient with primal averaging.}\label{sec:ex_pravg_sgd} Inspired by the numerical step-size selection tool provided in \appref{sec:paramSelec}, we propose an alternative to averaging---sometimes referred to as \emph{primal averaging}~\citep{tao2018primal}---corresponding to evaluating the stochastic gradient at the averaged iterate, in the particular case of a fixed-step policy $\delta_k=\tfrac1L$:
			\begin{equation*}
			\begin{aligned}
			y_{k+1}&=\tfrac{k}{k+1}\,y_k+\tfrac{1}{k+1}\, x_k,\\
			x_{k+1}^{({i_k})}&=x_k - \tfrac1L G(y_{k+1};{i_k}).
			\end{aligned}
			\end{equation*}
			\edit{The following theorem was obtained through the use of $N$ LMIs of sizes $(3+n)\times (3+n)$.}
			\begin{theorem}\label{thm:pavg_BV}
				Consider the following iterative scheme
				\begin{equation*}
				\begin{aligned}
				y_{k+1}&=\tfrac{d_k}{d_k+\delta_k L} y_k+\tfrac{\delta_k L}{d_k+\delta_k L}x_k\\
				x_{k+1}^{({i_k})}&=x_k-\delta_k G(y_{k+1};{i_k})
				\end{aligned}
				\end{equation*}
				for some $d_k, \delta_k\geq0$. Assuming $f\in\FL$ and $\E_{i_k} \normsq{G(y_{k+1};{i_k})}\leq \sigma^2+\normsq{f'(y_{k+1})}$ {\color{red}(and $\E_{i_k} G(y_{k+1};i_k)=f'(y_{k+1})$)}, the following inequality holds
				\[d_{k+1}(f(y_{k+1})-f_\star)+\tfrac{L}{2}\E_{i_k}\normsq{x_{k+1}^{({i_k})}-x_\star}\leq d_k (f(y_{k})-f_\star)+\tfrac{L}{2}\normsq{x_k-x_\star}+e_k\sigma^2,\]
				with $d_{k+1}=d_k+\delta_k L$ and $e_k=\tfrac{L\delta_k^2}{2}$ when $\delta_k\leq\tfrac1L$.
			\end{theorem}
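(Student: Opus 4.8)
The plan is to establish the stated bound directly as a one-step \eqref{eq:pot}-type estimate, following the recipe behind Proposition~\ref{prop:verify_potential}: with the candidate potential $\phi^f_k(y_k,x_k)=d_k(f(y_k)-f_\star)+\tfrac L2\normsq{x_k-x_\star}$, I would exhibit an explicit nonnegative combination of valid inequalities that certifies $\E_{i_k}\phi^f_{k+1}-\phi^f_k-e_k\sigma^2\le0$, i.e., a feasible point of~\eqref{eq:tt}. Since the oracle enters the $x$-update linearly, it is convenient to take the expectation over $i_k$ first, using only the two moments $\E_{i_k}G(y_{k+1};i_k)=f'(y_{k+1})$ and $\E_{i_k}\normsq{G(y_{k+1};i_k)}\le\sigma^2+\normsq{f'(y_{k+1})}$; this collapses the $|I|$ scenarios and reduces the whole claim to a deterministic inequality in $f(y_k),f(y_{k+1}),f'(y_{k+1}),y_k,x_k,x_\star$. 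The finite-support / LMI-with-averaging machinery of \secref{sec:boundedVar} and Remark~\ref{rem:lossless} is what numerically produced the multipliers used below, but it plays no further role once those two moments are in hand.

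First I would expand $\E_{i_k}\normsq{x_{k+1}^{(i_k)}-x_\star}=\normsq{x_k-x_\star}-2\delta_k\inner{f'(y_{k+1})}{x_k-x_\star}+\delta_k^2\,\E_{i_k}\normsq{G(y_{k+1};i_k)}$ and bound the last term by $\delta_k^2(\sigma^2+\normsq{f'(y_{k+1})})$; the choice $e_k=\tfrac{L\delta_k^2}{2}$ then cancels the $\sigma^2$ contribution exactly, leaving
\begin{align*}
\E_{i_k}\phi^f_{k+1}-\phi^f_k-e_k\sigma^2\le{}& d_{k+1}(f(y_{k+1})-f_\star)-d_k(f(y_k)-f_\star)\\
&{}-\delta_kL\inner{f'(y_{k+1})}{x_k-x_\star}+\tfrac{L\delta_k^2}{2}\normsq{f'(y_{k+1})}.
\end{align*}
The key algebraic step is to rewrite the inner-product term through the averaging rule: since $d_{k+1}=d_k+\delta_kL$, the identity $(d_k+\delta_kL)y_{k+1}=d_ky_k+\delta_kLx_k$ yields $\delta_kL(x_k-x_\star)=\delta_kL(y_{k+1}-x_\star)+d_k(y_{k+1}-y_k)$, hence $-\delta_kL\inner{f'(y_{k+1})}{x_k-x_\star}=-\delta_kL\inner{f'(y_{k+1})}{y_{k+1}-x_\star}+d_k\inner{f'(y_{k+1})}{y_k-y_{k+1}}$.

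It then remains to invoke two $\FL$-interpolation inequalities at $y_{k+1}$: (i) plain convexity between $y_k$ and $y_{k+1}$, with multiplier $d_k$, which turns $-d_k(f(y_k)-f_\star)+d_k\inner{f'(y_{k+1})}{y_k-y_{k+1}}$ into $-d_k(f(y_{k+1})-f_\star)$; and (ii) the smoothness-augmented convexity inequality against $x_\star$, which with $f'(x_\star)=0$ reads $\inner{f'(y_{k+1})}{y_{k+1}-x_\star}\ge f(y_{k+1})-f_\star+\tfrac1{2L}\normsq{f'(y_{k+1})}$, with multiplier $\delta_kL$. Substituting, and using $d_{k+1}-d_k=\delta_kL$, all function-value terms cancel and the bound collapses to $-\tfrac{\delta_k}{2}(1-L\delta_k)\normsq{f'(y_{k+1})}$, which is $\le0$ exactly when $0\le\delta_k\le\tfrac1L$; the leftover $\tfrac{\delta_k}{2}(1-L\delta_k)\normsq{f'(y_{k+1})}\ge0$ is slack we discard. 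The main obstacle is purely bookkeeping: putting the averaging identity in the precise form that makes the function values telescope, and tracking the sign of the residual gradient-norm term---which is exactly what forces $\delta_k\le1/L$. Nonnegativity of the three multipliers $\tfrac{L\delta_k^2}{2}$, $d_k$, $\delta_kL$ is immediate.
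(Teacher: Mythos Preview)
Your proof is correct and follows essentially the same route as the paper's: the same three ingredients with the same weights $\delta_kL$, $d_k$, and $e_k=\tfrac{L\delta_k^2}{2}$, and the averaging identity $(d_k+\delta_kL)y_{k+1}=d_ky_k+\delta_kLx_k$ is exactly what the paper's weighted sum exploits implicitly. The only difference is that for step (i) the paper uses the full smoothness--convexity interpolation inequality $f(y_k)\ge f(y_{k+1})+\inner{f'(y_{k+1})}{y_k-y_{k+1}}+\tfrac1{2L}\normsq{f'(y_{k+1})-f'(y_k)}$, yielding an extra nonnegative slack term $\tfrac{d_k}{2L}\normsq{f'(y_{k+1})-f'(y_k)}$, whereas you use plain convexity; your version is a harmless simplification (the paper itself makes the same simplification in the closely related Theorem~\ref{thm:pavgbis_BV}).
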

			The proof is provided in \appref{sec:primalavg_pot}. In particular, the choice $\delta_k=(L(1+k)^{\alpha})^{-1} $ leads to the results provided in Table~\ref{tab:bv_best} (details in \appref{sec:BV_rates}), and an alternate version where we always evaluate the stochastic gradient at the averaged iterate for any step-size policy $\delta_k$ is provided in \appref{sec:alt_pavging}.
			\begin{table}[!ht]
				\begin{center}
					{\renewcommand{\arraystretch}{1.3}
						\begin{tabular}{@{}ccccc@{}}
							\specialrule{2pt}{1pt}{1pt}
							& & $\normsq{x_0-x_\star}$ & $\sigma^2$ & Optimal $\alpha$\\
							Vanilla SGD&$\E f(x_k)-f_\star\leq $ & { $\bO(k^{\alpha-1})$} & { $\bO(k^{1-2\alpha})$} & {$2/3$} \\
							Polyak-Ruppert averaging&$\E f(z_k)-f_\star\leq $ & {$\bO(k^{\alpha-1})$} & {$\bO(k^{-\alpha})$} & {$1/2$}\\
							Primal averaging&$\E f(y_k)-f_\star\leq $ & {$\bO(k^{\alpha-1})$} & { $\bO(k^{-\alpha})$} & {$1/2$} \\
							\specialrule{2pt}{1pt}{1pt}
						\end{tabular}}\vspace{-.6cm}
				\end{center}
				\caption{Asymptotic rates for SGD, Polyak-Ruppert averaging, and primal averaging under uniformly bounded variance $\E_i\normsq{G(x;i)}\leq \sigma^2+\normsq{f'(x)}$ and step-sizes $\delta_k\sim k^{-\alpha}$. \edit{A factor $\log k$ was neglected for optimal $\alpha$'s. Details in~\appref{sec:BV_rates}, and momentum in \appref{sec:sgd_fast}}.}
				\label{tab:bv_best}
			\end{table}
			
			\section{Application to stochastic convex minimization for over-parameterized models}\label{sec:zeroVarOpt}		
			In many modern machine learning settings, models are over-parametrized and allow interpolating the data. This is discussed by~\citet{schmidt2013fast,belkin18,vaswani2018fast} and sometimes analyzed through the use of \emph{growth conditions} (which we discuss in \appref{sec:weakgrowth_proofs}). Alternatively, we model this scenario through the setup
			\[ \min_{x\in\Rd} \{ f(x)\equiv \ \E_i f_i(x) \},  \]
			where we assume $f_i\in\FL$ and that there exists an optimal point $x_\star$ such that $f_i'(x_\star)=0$ for all $i\in I$. Using the previous methodology, the best worst-case guarantees we could reach for vanilla SGD (without averaging) was achieved by using a decreasing step-size policy, resulting only in a disappointing $\bO(k^{-1/2})$ guarantee. On the other hand, the following method (inspired by our step-size selection tool in appendix) turned out to be considerably simpler to analyze, while enjoying better worst-case guarantees. As in the previous section, the main idea is to evaluate the stochastic gradient at the averaged iterate instead of the last one (primal averaging). The proof is delayed to \appref{sec:overparameterized}\edit{, and relied on using our step-size selection technique. The computational cost (of designing $N$ iterations of this method) was that of solving $N$ LMIs of sizes $(3 + 3n + n^2)\times(3 + 3n + n^2)$.}
			\begin{theorem}\label{thm:overparameterized} Let $x_k\in\Rd$, $f_i\in\FL$ and an optimal point $x_\star$ such that $f_i'(x_\star)=0$ for all $i\in I$. Then the iterative scheme
				\begin{equation*}
				\begin{aligned}
				y_{k+1}&=\tfrac{d_k}{d_k+\delta_k L}y_k+\tfrac{\delta_k L}{d_k+\delta_k L}x_k,\\
				x_{k+1}^{({i_k})}&=x_k-\delta_k f_{{i_k}}'(y_{k+1}),
				\end{aligned}
				\end{equation*}
				satisfies
				\[d_{k+1} (f(y_{k+1})-f_\star)+\tfrac{L}{2}\E_{i_k}\normsq{x_{k+1}^{({i_k})}-x_\star}\leq d_{k} (f(y_k)-f_\star)+\tfrac{L}{2}\normsq{x_k-x_\star},\]
				for all values of $d_k, \delta_k\geq0$ and
				\begin{equation*}
				d_{k+1}= \left\{\begin{array}{ll}
				d_k+\delta_k L & \text{if } \delta_k\leq \tfrac1L,\\
				d_k+2 \delta_k L - \delta_k^2 L^2 & \text{otherwise.}
				\end{array}\right.
				\end{equation*} 
			\end{theorem}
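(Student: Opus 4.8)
The plan is to produce this statement exactly in the style of Theorem~\ref{thm:GM_Lyap} and Theorem~\ref{thm:pavg_BV}, namely by exhibiting a feasible point of the one-step problem~\eqref{eq:tt} for the potential $\phi_k^f(y_k,x_k)=d_k(f(y_k)-f_\star)+\tfrac{L}{2}\normsq{x_k-x_\star}$ with $e_k=0$ (the interpolation hypothesis $f_i'(x_\star)=0$ is precisely what makes the variance term vanish). Concretely, the claimed inequality will be obtained as a nonnegative linear combination of a few inequalities valid for each $f_i$; since the only squared-gradient quantity that survives averaging enters linearly, this verification is in fact affine rather than semidefinite, so the whole argument reduces to choosing the right multipliers.

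First I would reduce to a scalar inequality. Writing $g:=f'(y_{k+1})=\E_{i_k}f_{i_k}'(y_{k+1})$ and $S:=\E_{i_k}\normsq{f_{i_k}'(y_{k+1})}$, and expanding $\E_{i_k}\normsq{x_{k+1}^{(i_k)}-x_\star}$ from $x_{k+1}^{(i_k)}=x_k-\delta_k f_{i_k}'(y_{k+1})$, the target becomes equivalent to
\[ d_{k+1}(f(y_{k+1})-f_\star)-\delta_k L\,\inner{g}{x_k-x_\star}+\tfrac{L\delta_k^2}{2}S\leq d_k(f(y_k)-f_\star). \]
I would then use the averaging step in the form $\delta_k L(x_k-x_\star)=(d_k+\delta_k L)(y_{k+1}-x_\star)-d_k(y_k-x_\star)$ to rewrite $\inner{g}{x_k-x_\star}$ in terms of $\inner{g}{y_{k+1}-x_\star}$ and $\inner{g}{y_k-x_\star}$. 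The inequalities I would feed in, each valid for every $i\in I$ and then averaged over $i_k\in I$, are: (I1) the $L$-smoothness-plus-convexity inequality for $f_i$ between $y_{k+1}$ and $x_\star$, which thanks to $f_i'(x_\star)=0$ reads $f_i(x_\star)\geq f_i(y_{k+1})+\inner{f_i'(y_{k+1})}{x_\star-y_{k+1}}+\tfrac1{2L}\normsq{f_i'(y_{k+1})}$; (I1') its opposite direction, $f_i(y_{k+1})\geq f_i(x_\star)+\tfrac1{2L}\normsq{f_i'(y_{k+1})}$; and (I2) plain convexity of $f_i$ between $y_{k+1}$ and $y_k$.

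Finally I would fix the multipliers by coefficient matching. Matching the coefficients of $f(y_k)-f_\star$, $\inner{g}{y_k-x_\star}$ and $\inner{g}{y_{k+1}-x_\star}$ forces the weight of (I2) to be $d_k$ and that of (I1) to be $\delta_k L$; matching the coefficient of $S$ then forces the weight $\mu$ of (I1') to be $\delta_k L(\delta_k L-1)$, and matching the coefficient of $f(y_{k+1})-f_\star$ yields $d_{k+1}=d_k+2\delta_k L-\delta_k^2 L^2$. When $\delta_k>1/L$ this $\mu\geq0$, so the combination with weights $\delta_k L$ on (I1), $\delta_k L(\delta_k L-1)$ on (I1'), and $d_k$ on (I2) reproduces the target inequality verbatim. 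When $\delta_k\leq1/L$ that weight would be negative and hence inadmissible, so I would instead discard (I1') entirely: the combination with weights $\delta_k L$ on (I1) and $d_k$ on (I2) certifies the same inequality but with coefficient $\tfrac{\delta_k}{2}$ in front of $S$ instead of $\tfrac{L\delta_k^2}{2}$, which is fine since $\tfrac{L\delta_k^2}{2}\leq\tfrac{\delta_k}{2}$ when $\delta_k L\leq1$ and $S\geq0$, giving $d_{k+1}=d_k+\delta_k L$. I expect the only genuinely non-mechanical point to be this case split: recognizing that for small step-sizes the interpolation inequalities produce more ``variance-type'' slack than the recursion needs, so it can simply be thrown away, whereas for large step-sizes part of the progress must be spent absorbing it, which is exactly what degrades the growth of $d_k$; the rest is bookkeeping plus the equivalence transformations above.
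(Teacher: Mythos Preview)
Your proposal is correct and matches the paper's proof essentially line for line: the paper uses exactly the same three averaged inequalities---(I1) with weight $\delta_k L$, (I2) with weight $d_k$, and (I1') with weight $\delta_k L(\delta_k L-1)$---and performs the same case split, dropping (I1') when $\delta_k\leq 1/L$ and absorbing the resulting nonnegative slack $\tfrac{\delta_k}{2}(1-\delta_k L)\,S$. The only cosmetic difference is that the paper writes out the weighted sum and verifies the residual directly rather than presenting it as coefficient matching.
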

			Using $\delta_k=\tfrac1L$ (choice that maximizes $d_{k+1}$) and $d_0=0$ leads to $d_k=k$ and to the algorithm
			\begin{equation*}
			\begin{aligned}
			y_{k+1}&=\tfrac{k}{k+1} y_k+\tfrac{1}{k+1}x_k,\\
			x_{k+1}^{({i_k})}&=x_k-\tfrac{1}{L}f_{{i_k}}'(y_{k+1}),
			\end{aligned}
			\end{equation*}	
			for which the bound $\E f(y_{k})-f_\star\leq \tfrac{L\normsq{x_0-x_\star}}{2k}$ holds for all $k\geq 0$.
			
			\section{Conclusion}\label{sec:ccl}
			In this work, we showed how to adapt the performance estimation approach to obtain potential-based proofs. Given a first-order methods and a class of (quadratic) potential functions \edit{and predefined numbers of iterations}, the methodology allows obtaining the \emph{best} worst-case guarantees that can be obtained by a potential-based approach with a given structure---choosing an appropriate structure is therefore the critical point. Hence, if the methodology fails to provide the user a satisfactory worst-case bound, the only possible alternatives for improving the results are to either (i) enrich the class of potential functions, or (ii)~add assumptions, or change the problem class. This methodology has the advantage of quickly allowing to assess feasibility of new ideas and to develop simple algorithms for new settings. 
			
			Although provided only for unconstrained minimization, the methodology allows dealing with many other settings such as projection, linear optimization operators (a.k.a., Frank-Wolfe or conditional gradient oracles), proximal terms, deterministic noise (bias) and so on. For using the framework, the only requirement is the ability to formulate the verifications of potential inequalities of type ``$\E\phi_{k+1}^f\leq \phi_k^f+e_k$'' (or sufficient conditions for satisfying it) in a tractable way---and this can be done for many optimization settings~\citep{taylor2017exact} and standard operator classes~\citep{ryu2018operator} (e.g., for studying fixed-point iterations for monotone inclusion problems). The current work focuses on smooth problems without strong convexity, but the same tools can be used when strong convexity (or related notions for obtaining linear convergence results, see e.g., \citet{necoara2018linear,karimi2016linear}) is involved\edit{, as in e.g.,~\cite{moulines2011non,nguyen2018new}, where the norms of the stochastic gradients are not assumed to be uniformly bounded}. Finally, minor adaptations allow studying algorithms specifically designed for finite sums problems (see e.g.,~\citet{roux2012stochastic,johnson2013accelerating,defazio2014saga,schmidt2017minimizing,allen2017katyusha,zhou19c}).

			\paragraph{Acceleration and algorithmic design.} In \appref{sec:paramSelec}, \appref{sec:opt_smooth}, \appref{sec:sgd_fast}, \appref{sec:overp_param_selec}, we discuss techniques for automatic step-size selection in different settings. This is done by adapting the constructive approach to efficient first-order methods by~\citet{drori2018efficient} to deal with potential functions. In a few words, the idea is to study methods with \emph{unrealistic line-search procedures} and to deduce, from the analysis, step-size policies for methods of type~\eqref{eq:algo} that enjoys the same worst-case guarantees. The technique is also inspired by historical developments related to accelerated first-order methods~\citep{nemirovski1982orth,Nesterov:1983wy}.		
			
			\paragraph{Application to proximal/projected methods.} The methodology extends to projected and proximal settings, as previously used in the performance estimation literature~\citep{drori2014contributions,taylor2017exact,taylor2018exact}. As an example, we provide a corresponding potential function for the proximal gradient method in \appref{sec:pgm}.
			
			\paragraph{Application to coordinate descent.} We illustrate the application of technique to coordinate descent-type schemes in \appref{sec:coordinatedescent}. The assumptions used here differ from standard ones~\citep{nesterov2012efficiency}, but allows a unified treatment of this kind of methods. We also use this example for illustrating the incorporation of strong convexity within the framework.
			
			\section*{Codes}
			{The codes used to generate and validate the results are available at\\ {\centering \href{https://github.com/AdrienTaylor/Potential-functions-for-first-order-methods}{github.com/AdrienTaylor/Potential-functions-for-first-order-methods}.}}
			
			\section*{Acknowledgments}\label{s:acks}
			{The authors would like to thank Nicolas Flammarion, Fran\c{c}ois Glineur and Yurii Nesterov for insightful discussions related on the one hand to stochastic gradient methods and on the other one to potential functions and estimate sequences. \edit{The authors also thank the three anonymous referees for their constructive remarks which helped improving the quality of this paper.} The authors acknowledge support from the European Research Council (grant SEQUOIA 724063).} \correc{The authors are also extremely grateful to \href{https://konstmish.github.io/}{Konstantin Mishchenko} and \href{https://scholar.google.pt/citations?user=811XPMsAAAAJ&hl=en&oi=sra}{Jo\~{a}o Miguel Oliveira Domingos} for reporting a few typos and for suggesting a few corrections and improvements to the text and to the appendix.}

			\bibliographystyle{plainnat}
			\bibliography{bib}

\begin{thebibliography}{78}
\providecommand{\natexlab}[1]{#1}
\providecommand{\url}[1]{\texttt{#1}}
\expandafter\ifx\csname urlstyle\endcsname\relax
  \providecommand{\doi}[1]{doi: #1}\else
  \providecommand{\doi}{doi: \begingroup \urlstyle{rm}\Url}\fi

\bibitem[Allen-Zhu(2017)]{allen2017katyusha}
Zeyuan Allen-Zhu.
\newblock Katyusha: The first direct acceleration of stochastic gradient
  methods.
\newblock In \emph{Proceedings of the 49th Annual ACM SIGACT Symposium on
  Theory of Computing}, pages 1200--1205, 2017.

\bibitem[Azagra and Mudarra(2017)]{azagra2017extension}
Daniel Azagra and Carlos Mudarra.
\newblock An extension theorem for convex functions of class {C}$^{1, 1}$ on
  {H}ilbert spaces.
\newblock \emph{Journal of Mathematical Analysis and Applications},
  446\penalty0 (2):\penalty0 1167--1182, 2017.

\bibitem[Bach and Moulines(2011)]{moulines2011non}
Francis Bach and Eric Moulines.
\newblock Non-asymptotic analysis of stochastic approximation algorithms for
  machine learning.
\newblock In \emph{Advances in Neural Information Processing Systems (NIPS)},
  pages 451--459, 2011.

\bibitem[Bach and Moulines(2013)]{bach2013non}
Francis Bach and Eric Moulines.
\newblock Non-strongly-convex smooth stochastic approximation with convergence
  rate {O}($1/n$).
\newblock In \emph{Advances in Neural Information Processing Systems (NIPS)},
  pages 773--781, 2013.

\bibitem[Bansal and Gupta(2017)]{bansal2017potential}
Nikhil Bansal and Anupam Gupta.
\newblock Potential-function proofs for first-order methods.
\newblock \emph{preprint arXiv:1712.04581}, 2017.

\bibitem[Beck and Teboulle(2009)]{beck2009fast}
Amir Beck and Marc Teboulle.
\newblock A fast iterative shrinkage-thresholding algorithm for linear inverse
  problems.
\newblock \emph{SIAM Journal on Imaging Sciences}, 2\penalty0 (1):\penalty0
  183--202, 2009.

\bibitem[Bottou et~al.(2018)Bottou, Curtis, and
  Nocedal]{bottou2018optimization}
L{\'e}on Bottou, Frank~E. Curtis, and Jorge Nocedal.
\newblock Optimization methods for large-scale machine learning.
\newblock \emph{SIAM Review}, 60\penalty0 (2):\penalty0 223--311, 2018.

\bibitem[Cohen et~al.(2018)Cohen, Diakonikolas, and
  Orecchia]{cohen2018acceleration}
Michael~B. Cohen, Jelena Diakonikolas, and Lorenzo Orecchia.
\newblock On acceleration with noise-corrupted gradients.
\newblock In \emph{Proceedings of the 35th International Conference on Machine
  Learning (ICML)}, pages 1018--1027, 2018.

\bibitem[Cyrus et~al.(2018)Cyrus, Hu, Van~Scoy, and Lessard]{cyrus2018robust}
Saman Cyrus, Bin Hu, Bryan Van~Scoy, and Laurent Lessard.
\newblock A robust accelerated optimization algorithm for strongly convex
  functions.
\newblock In \emph{2018 Annual American Control Conference (ACC)}, pages
  1376--1381, 2018.

\bibitem[Daniilidis et~al.(2018)Daniilidis, Haddou, Le~Gruyer, and
  Ley]{daniilidis2018explicit}
Aris Daniilidis, Mounir Haddou, Erwan Le~Gruyer, and Olivier Ley.
\newblock Explicit formulas for {C}$^{\{1, 1\}}$ {G}laeser-{W}hitney extensions
  of $1$-{T}aylor fields in {H}ilbert spaces.
\newblock \emph{Proceedings of the American Mathematical Society}, 146\penalty0
  (10):\penalty0 4487--4495, 2018.

\bibitem[de~Klerk et~al.(2017{\natexlab{a}})de~Klerk, Glineur, and
  Taylor]{de2017worst}
Etienne de~Klerk, Fran{\c{c}}ois Glineur, and Adrien~B. Taylor.
\newblock On the worst-case complexity of the gradient method with exact line
  search for smooth strongly convex functions.
\newblock \emph{Optimization Letters}, 11\penalty0 (7):\penalty0 1185--1199,
  2017{\natexlab{a}}.

\bibitem[de~Klerk et~al.(2017{\natexlab{b}})de~Klerk, Glineur, and
  Taylor]{de2017worst2}
Etienne de~Klerk, Francois Glineur, and Adrien~B. Taylor.
\newblock Worst-case convergence analysis of gradient and newton methods
  through semidefinite programming performance estimation.
\newblock \emph{preprint arXiv:1709.05191}, 2017{\natexlab{b}}.

\bibitem[Defazio et~al.(2014)Defazio, Bach, and
  Lacoste-Julien]{defazio2014saga}
Aaron Defazio, Francis Bach, and Simon Lacoste-Julien.
\newblock {SAGA: A} fast incremental gradient method with support for
  non-strongly convex composite objectives.
\newblock In \emph{Advances in Neural Information Processing Systems (NIPS)},
  pages 1646--1654, 2014.

\bibitem[Devolder(2011)]{devolder2011stochastic}
Olivier Devolder.
\newblock Stochastic first order methods in smooth convex optimization.
\newblock Technical report, Universit{\'e} catholique de Louvain, Center for
  Operations Research and Econometrics, 2011.

\bibitem[Dieuleveut et~al.(2017)Dieuleveut, Flammarion, and
  Bach]{dieuleveut2017harder}
Aymeric Dieuleveut, Nicolas Flammarion, and Francis Bach.
\newblock Harder, better, faster, stronger convergence rates for least-squares
  regression.
\newblock \emph{Journal of Machine Learning Research}, 18\penalty0
  (1):\penalty0 3520--3570, 2017.

\bibitem[Drori(2014)]{drori2014contributions}
Yoel Drori.
\newblock \emph{Contributions to the Complexity Analysis of Optimization
  Algorithms}.
\newblock PhD thesis, Tel-Aviv University, 2014.

\bibitem[Drori(2017)]{drori2017exact}
Yoel Drori.
\newblock The exact information-based complexity of smooth convex minimization.
\newblock \emph{Journal of Complexity}, 39:\penalty0 1--16, 2017.

\bibitem[Drori(2018)]{drori2018properties}
Yoel Drori.
\newblock On the properties of convex functions over open sets.
\newblock \emph{preprint arXiv:1812.02419}, 2018.

\bibitem[Drori and Taylor(2018)]{drori2018efficient}
Yoel Drori and Adrien~B. Taylor.
\newblock Efficient first-order methods for convex minimization: a constructive
  approach.
\newblock \emph{preprint arXiv:1803.05676}, 2018.

\bibitem[Drori and Teboulle(2014)]{drori2014performance}
Yoel Drori and Marc Teboulle.
\newblock Performance of first-order methods for smooth convex minimization: a
  novel approach.
\newblock \emph{Mathematical Programming}, 145\penalty0 (1-2):\penalty0
  451--482, 2014.

\bibitem[Drori and Teboulle(2016)]{drori2016optimal}
Yoel Drori and Marc Teboulle.
\newblock An optimal variant of {K}elley's cutting-plane method.
\newblock \emph{Mathematical Programming}, 160\penalty0 (1-2):\penalty0
  321--351, 2016.

\bibitem[Duflo(1997)]{Duflo97}
Marie Duflo.
\newblock \emph{Random Iterative Models}.
\newblock Springer-Verlag, Berlin, Heidelberg, 1997.

\bibitem[Fazlyab et~al.(2018)Fazlyab, Ribeiro, Morari, and
  Preciado]{fazlyab2018analysis}
Mahyar Fazlyab, Alejandro Ribeiro, Manfred Morari, and Victor~M. Preciado.
\newblock Analysis of optimization algorithms via integral quadratic
  constraints: Nonstrongly convex problems.
\newblock \emph{SIAM Journal on Optimization}, 28\penalty0 (3):\penalty0
  2654--2689, 2018.

\bibitem[Fercoq and Richt{\'a}rik(2015)]{fercoq2015accelerated}
Olivier Fercoq and Peter Richt{\'a}rik.
\newblock Accelerated, parallel, and proximal coordinate descent.
\newblock \emph{SIAM Journal on Optimization}, 25\penalty0 (4):\penalty0
  1997--2023, 2015.

\bibitem[Hu and Lessard(2017)]{hu2017dissipativity}
Bin Hu and Laurent Lessard.
\newblock Dissipativity theory for nesterov's accelerated method.
\newblock In \emph{Proceedings of the 34th International Conference on Machine
  Learning (ICML)}, pages 1549--1557, 2017.

\bibitem[Hu et~al.(2017{\natexlab{a}})Hu, Seiler, and Lessard]{hu2017analysis}
Bin Hu, Peter Seiler, and Laurent Lessard.
\newblock Analysis of approximate stochastic gradient using quadratic
  constraints and sequential semidefinite programs.
\newblock \emph{preprint arXiv:1711.00987}, 2017{\natexlab{a}}.

\bibitem[Hu et~al.(2017{\natexlab{b}})Hu, Seiler, and Rantzer]{hu2017unified}
Bin Hu, Peter Seiler, and Anders Rantzer.
\newblock A unified analysis of stochastic optimization methods using jump
  system theory and quadratic constraints.
\newblock In \emph{Conference on Learning Theory (COLT)}, pages 1157--1189,
  2017{\natexlab{b}}.

\bibitem[Hu et~al.(2009)Hu, Pan, and Kwok]{hu2009accelerated}
Chonghai Hu, Weike Pan, and James~T. Kwok.
\newblock Accelerated gradient methods for stochastic optimization and online
  learning.
\newblock In \emph{Advances in Neural Information Processing Systems (NIPS)},
  pages 781--789, 2009.

\bibitem[Jain et~al.(2018{\natexlab{a}})Jain, Kakade, Kidambi, Netrapalli, and
  Sidford]{jain2018accelerating}
Prateek Jain, Sham~M. Kakade, Rahul Kidambi, Praneeth Netrapalli, and Aaron
  Sidford.
\newblock Accelerating stochastic gradient descent for least squares
  regression.
\newblock In \emph{Conference on Learning Theory (COLT)}, pages 545--604,
  2018{\natexlab{a}}.

\bibitem[Jain et~al.(2018{\natexlab{b}})Jain, Kakade, Kidambi, Netrapalli, and
  Sidford]{jain2018parallelizing}
Prateek Jain, Sham~M. Kakade, Rahul Kidambi, Praneeth Netrapalli, and Aaron
  Sidford.
\newblock Parallelizing stochastic gradient descent for least squares
  regression: Mini-batching, averaging, and model misspecification.
\newblock \emph{Journal of Machine Learning Research}, 18\penalty0
  (223):\penalty0 1--42, 2018{\natexlab{b}}.

\bibitem[Johnson and Zhang(2013)]{johnson2013accelerating}
Rie Johnson and Tong Zhang.
\newblock Accelerating stochastic gradient descent using predictive variance
  reduction.
\newblock In \emph{Advances in Neural Information Processing Systems (NIPS)},
  pages 315--323, 2013.

\bibitem[Karimi et~al.(2016)Karimi, Nutini, and Schmidt]{karimi2016linear}
Hamed Karimi, Julie Nutini, and Mark Schmidt.
\newblock Linear convergence of gradient and proximal-gradient methods under
  the {P}olyak-{\l}ojasiewicz condition.
\newblock In \emph{Joint European Conference on Machine Learning and Knowledge
  Discovery in Databases}, pages 795--811, 2016.

\bibitem[Kim and Fessler(2016)]{kim2016optimized}
Donghwan Kim and Jeffrey~A. Fessler.
\newblock Optimized first-order methods for smooth convex minimization.
\newblock \emph{Mathematical Programming}, 159\penalty0 (1-2):\penalty0
  81--107, 2016.

\bibitem[Kim and Fessler(2018{\natexlab{a}})]{kim2018another}
Donghwan Kim and Jeffrey~A Fessler.
\newblock Another look at the fast iterative shrinkage/thresholding algorithm
  (fista).
\newblock \emph{SIAM Journal on Optimization}, 28\penalty0 (1):\penalty0
  223--250, 2018{\natexlab{a}}.

\bibitem[Kim and Fessler(2018{\natexlab{b}})]{kim2018generalizing}
Donghwan Kim and Jeffrey~A Fessler.
\newblock Generalizing the optimized gradient method for smooth convex
  minimization.
\newblock \emph{SIAM Journal on Optimization}, 28\penalty0 (2):\penalty0
  1920--1950, 2018{\natexlab{b}}.

\bibitem[Kim and Fessler(2018{\natexlab{c}})]{kim2018optimizing}
Donghwan Kim and Jeffrey~A. Fessler.
\newblock Optimizing the efficiency of first-order methods for decreasing the
  gradient of smooth convex functions.
\newblock \emph{preprint arXiv:1803.06600}, 2018{\natexlab{c}}.

\bibitem[Lan(2012)]{lan2012optimal}
Guanghui Lan.
\newblock An optimal method for stochastic composite optimization.
\newblock \emph{Mathematical Programming}, 133\penalty0 (1-2):\penalty0
  365--397, 2012.

\bibitem[Le~Roux et~al.(2012)Le~Roux, Schmidt, and Bach]{roux2012stochastic}
Nicolas Le~Roux, Mark Schmidt, and Francis Bach.
\newblock A stochastic gradient method with an exponential convergence rate for
  finite training sets.
\newblock In \emph{Advances in Neural Information Processing Systems (NIPS)},
  pages 2663--2671, 2012.

\bibitem[Lessard et~al.(2016)Lessard, Recht, and Packard]{lessard2016analysis}
Laurent Lessard, Benjamin Recht, and Andrew Packard.
\newblock Analysis and design of optimization algorithms via integral quadratic
  constraints.
\newblock \emph{SIAM Journal on Optimization}, 26\penalty0 (1):\penalty0
  57--95, 2016.

\bibitem[L\"{o}fberg(2004)]{Article:Yalmip}
Johan L\"{o}fberg.
\newblock {YALMIP} : A toolbox for modeling and optimization in {MATLAB}.
\newblock In \emph{Proceedings of the {CACSD} Conference}, 2004.

\bibitem[Lyapunov and Fuller(1992)]{lyapunov}
Aleksandr~M. Lyapunov and Anthony~T. Fuller.
\newblock \emph{General Problem of the Stability Of Motion}.
\newblock Control Theory and Applications Series. Taylor \& Francis, 1992.
\newblock Original text in Russian, 1892.

\bibitem[Ma et~al.(2018)Ma, Bassily, and Belkin]{belkin18}
Siyuan Ma, Raef Bassily, and Mikhail Belkin.
\newblock The power of interpolation: Understanding the effectiveness of {SGD}
  in modern over-parametrized learning.
\newblock In \emph{Proceedings of the 35th International Conference on Machine
  Learning (ICML)}, pages 3325--3334, 2018.

\bibitem[Mosek(2010)]{Article:Mosek}
APS Mosek.
\newblock The {MOSEK} optimization software.
\newblock \emph{Online at http://www.mosek.com}, 54, 2010.

\bibitem[Necoara et~al.(2018)Necoara, Nesterov, and Glineur]{necoara2018linear}
Ion Necoara, Yurii Nesterov, and Francois Glineur.
\newblock Linear convergence of first order methods for non-strongly convex
  optimization.
\newblock \emph{Mathematical Programming}, pages 1--39, 2018.

\bibitem[Nemirovski(1982)]{nemirovski1982orth}
Arkadi~S. Nemirovski.
\newblock Orth-method for smooth convex optimization.
\newblock \emph{Izvestia AN SSSR, Tekhnicheskaya Kibernetika}, 2:\penalty0
  937--947, 1982.

\bibitem[Nemirovski and Yudin(1983)]{Book:NemirovskyYudin}
Arkadi~S. Nemirovski and David~B. Yudin.
\newblock Problem complexity and method efficiency in optimization.
\newblock \emph{Willey-Interscience, New York}, 1983.

\bibitem[Nesterov(1983)]{Nesterov:1983wy}
Yurii Nesterov.
\newblock {A method of solving a convex programming problem with convergence
  rate {O}($1/k^2$)}.
\newblock \emph{Soviet Mathematics Doklady}, 27:\penalty0 372--376, 1983.

\bibitem[Nesterov(2004)]{Book:Nesterov}
Yurii Nesterov.
\newblock \emph{Introductory Lectures on Convex Optimization : a Basic Course}.
\newblock Applied optimization. Kluwer Academic Publishing, 2004.

\bibitem[Nesterov(2012{\natexlab{a}})]{nesterov2012efficiency}
Yurii Nesterov.
\newblock Efficiency of coordinate descent methods on huge-scale optimization
  problems.
\newblock \emph{SIAM Journal on Optimization}, 22\penalty0 (2):\penalty0
  341--362, 2012{\natexlab{a}}.

\bibitem[Nesterov(2012{\natexlab{b}})]{nesterov2012make}
Yurii Nesterov.
\newblock How to make the gradients small.
\newblock \emph{Optima}, 88:\penalty0 10--11, 2012{\natexlab{b}}.

\bibitem[Nesterov(2018)]{Book:Nesterov2}
Yurii Nesterov.
\newblock \emph{Lectures on Convex Optimization}.
\newblock Springer Optimization and Its Applications. Springer International
  Publishing, 2018.

\bibitem[Nguyen et~al.(2018)Nguyen, Nguyen, Richt{\'a}rik, Scheinberg,
  Tak{\'a}{\v{c}}, and van Dijk]{nguyen2018new}
Lam~M. Nguyen, Phuong~Ha Nguyen, Peter Richt{\'a}rik, Katya Scheinberg, Martin
  Tak{\'a}{\v{c}}, and Marten van Dijk.
\newblock New convergence aspects of stochastic gradient algorithms.
\newblock \emph{preprint arXiv:1811.12403}, 2018.

\bibitem[Polyak(1987)]{Book:polyak1987}
Boris~T. Polyak.
\newblock \emph{Introduction to Optimization}.
\newblock Optimization Software New York, 1987.

\bibitem[Polyak and Juditsky(1992)]{polyak1992acceleration}
Boris~T. Polyak and Anatoli~B. Juditsky.
\newblock Acceleration of stochastic approximation by averaging.
\newblock \emph{SIAM Journal on Control and Optimization}, 30\penalty0
  (4):\penalty0 838--855, 1992.

\bibitem[Richt{\'a}rik and Tak{\'a}{\v{c}}(2014)]{richtarik2014iteration}
Peter Richt{\'a}rik and Martin Tak{\'a}{\v{c}}.
\newblock Iteration complexity of randomized block-coordinate descent methods
  for minimizing a composite function.
\newblock \emph{Mathematical Programming}, 144\penalty0 (1-2):\penalty0 1--38,
  2014.

\bibitem[Robbins and Monro(1951)]{robbins1951stochastic}
Herbert Robbins and Sutton Monro.
\newblock A stochastic approximation method.
\newblock \emph{The Annals of Mathematical Statistics}, 22\penalty0
  (3):\penalty0 400--407, 1951.

\bibitem[Robbins and Siegmund(1971)]{robbins1971convergence}
Herbert Robbins and David Siegmund.
\newblock A convergence theorem for non negative almost supermartingales and
  some applications.
\newblock In \emph{Optimizing methods in statistics}, pages 233--257. Elsevier,
  1971.

\bibitem[Ruppert(1988)]{ruppert1988efficient}
David Ruppert.
\newblock Efficient estimations from a slowly convergent {R}obbins-{M}onro
  process.
\newblock Technical report, Cornell University Operations Research and
  Industrial Engineering, 1988.

\bibitem[Ryu et~al.(2018)Ryu, Taylor, Bergeling, and
  Giselsson]{ryu2018operator}
Ernest~K. Ryu, Adrien~B. Taylor, Carolina Bergeling, and Pontus Giselsson.
\newblock Operator splitting performance estimation: Tight contraction factors
  and optimal parameter selection.
\newblock \emph{preprint arXiv:1812.00146}, 2018.

\bibitem[Schmidt and Le~Roux(2013)]{schmidt2013fast}
Mark Schmidt and Nicolas Le~Roux.
\newblock Fast convergence of stochastic gradient descent under a strong growth
  condition.
\newblock \emph{preprint arXiv:1308.6370}, 2013.

\bibitem[Schmidt et~al.(2017)Schmidt, Le~Roux, and Bach]{schmidt2017minimizing}
Mark Schmidt, Nicolas Le~Roux, and Francis Bach.
\newblock Minimizing finite sums with the stochastic average gradient.
\newblock \emph{Mathematical Programming}, 162\penalty0 (1-2):\penalty0
  83--112, 2017.

\bibitem[Shi and Liu(2017)]{shi2017better}
Ziqiang Shi and Rujie Liu.
\newblock Better worst-case complexity analysis of the block coordinate descent
  method for large scale machine learning.
\newblock In \emph{16th IEEE International Conference on Machine Learning and
  Applications (ICMLA)}, pages 889--892, 2017.

\bibitem[Sturm(1999)]{Article:Sedumi}
Jos~F. Sturm.
\newblock Using {S}e{D}u{M}i 1.02, a {MATLAB} toolbox for optimization over
  symmetric cones.
\newblock \emph{Optimization Methods and Software}, 11--12:\penalty0 625--653,
  1999.

\bibitem[Tao et~al.(2018)Tao, Pan, Wu, and Tao]{tao2018primal}
Wei Tao, Zhisong Pan, Gaowei Wu, and Qing Tao.
\newblock Primal averaging: A new gradient evaluation step to attain the
  optimal individual convergence.
\newblock \emph{IEEE Transactions on Cybernetics}, 2018.

\bibitem[Taylor et~al.(2018{\natexlab{a}})Taylor, Van~Scoy, and
  Lessard]{taylor2018lyapunov}
Adrien Taylor, Bryan Van~Scoy, and Laurent Lessard.
\newblock Lyapunov functions for first-order methods: {T}ight automated
  convergence guarantees.
\newblock In \emph{Proceedings of the 35th International Conference on Machine
  Learning (ICML)}, pages 4897--4906, 2018{\natexlab{a}}.

\bibitem[Taylor et~al.(2017{\natexlab{a}})Taylor, Hendrickx, and
  Glineur]{taylor2017exact}
Adrien~B. Taylor, Julien~M. Hendrickx, and Fran{\c{c}}ois Glineur.
\newblock Exact worst-case performance of first-order methods for composite
  convex optimization.
\newblock \emph{SIAM Journal on Optimization}, 27\penalty0 (3):\penalty0
  1283--1313, 2017{\natexlab{a}}.

\bibitem[Taylor et~al.(2017{\natexlab{b}})Taylor, Hendrickx, and
  Glineur]{taylor2017performance}
Adrien~B. Taylor, Julien~M. Hendrickx, and Fran{\c{c}}ois Glineur.
\newblock Performance {E}stimation {T}oolbox ({PESTO}): automated worst-case
  analysis of first-order optimization methods.
\newblock In \emph{IEEE 56th Annual Conference on Decision and Control (CDC)},
  pages 1278--1283, 2017{\natexlab{b}}.

\bibitem[Taylor et~al.(2017{\natexlab{c}})Taylor, Hendrickx, and
  Glineur]{taylor2017smooth}
Adrien~B. Taylor, Julien~M. Hendrickx, and Fran{\c{c}}ois Glineur.
\newblock Smooth strongly convex interpolation and exact worst-case performance
  of first-order methods.
\newblock \emph{Mathematical Programming}, 161\penalty0 (1-2):\penalty0
  307--345, 2017{\natexlab{c}}.

\bibitem[Taylor et~al.(2018{\natexlab{b}})Taylor, Hendrickx, and
  Glineur]{taylor2018exact}
Adrien~B. Taylor, Julien~M Hendrickx, and Fran{\c{c}}ois Glineur.
\newblock Exact worst-case convergence rates of the proximal gradient method
  for composite convex minimization.
\newblock \emph{Journal of Optimization Theory and Applications}, 178\penalty0
  (2):\penalty0 455--476, 2018{\natexlab{b}}.

\bibitem[Toker and Ozbay(1995)]{toker1995np}
Onur Toker and Hitay Ozbay.
\newblock On the np-hardness of solving bilinear matrix inequalities and
  simultaneous stabilization with static output feedback.
\newblock In \emph{1995 Annual American Control Conference (ACC)}, volume~4,
  pages 2525--2526, 1995.

\bibitem[Tseng(2008)]{tseng2008accelerated}
Paul Tseng.
\newblock {On accelerated proximal gradient methods for convex-concave
  optimization}.
\newblock \emph{submitted to SIAM Journal on Optimization}, 2008.

\bibitem[Van~Scoy et~al.(2018)Van~Scoy, Freeman, and Lynch]{van2018fastest}
Bryan Van~Scoy, Randy~A. Freeman, and Kevin~M. Lynch.
\newblock The fastest known globally convergent first-order method for
  minimizing strongly convex functions.
\newblock \emph{IEEE Control Systems Letters}, 2\penalty0 (1):\penalty0 49--54,
  2018.

\bibitem[Vandenberghe and Boyd(1996)]{vandenberghe1996semidefinite}
Lieven Vandenberghe and Stephen Boyd.
\newblock Semidefinite programming.
\newblock \emph{SIAM review}, 38\penalty0 (1):\penalty0 49--95, 1996.

\bibitem[Vandenberghe et~al.(2005)Vandenberghe, Balakrishnan, Wallin, Hansson,
  and Roh]{vandenberghe2005interior}
Lieven Vandenberghe, V.~Ragu Balakrishnan, Ragnar Wallin, Anders Hansson, and
  Tae Roh.
\newblock Interior-point algorithms for semidefinite programming problems
  derived from the kyp lemma.
\newblock \emph{Positive Polynomials in Control}, pages 579--579, 2005.

\bibitem[Vaswani et~al.(2019)Vaswani, Bach, and Schmidt]{vaswani2018fast}
Sharan Vaswani, Francis Bach, and Mark Schmidt.
\newblock Fast and faster convergence of sgd for over-parameterized models and
  an accelerated perceptron.
\newblock In \emph{Proceedings of the 22nd International Conference on
  Artificial Intelligence and Statistics (AISTATS)}, pages 1195--1204, 2019.

\bibitem[Wilson et~al.(2016)Wilson, Recht, and Jordan]{wilson2016lyapunov}
Ashia~C. Wilson, Benjamin Recht, and Michael~I. Jordan.
\newblock A {L}yapunov analysis of momentum methods in optimization.
\newblock \emph{preprint arXiv:1611.02635}, 2016.

\bibitem[Xiao(2010)]{xiao2010dual}
Lin Xiao.
\newblock Dual averaging methods for regularized stochastic learning and online
  optimization.
\newblock \emph{Journal of Machine Learning Research}, 11:\penalty0 2543--2596,
  2010.

\bibitem[Zhou et~al.(2019)Zhou, Ding, Shang, Cheng, Li, and Luo]{zhou19c}
Kaiwen Zhou, Qinghua Ding, Fanhua Shang, James Cheng, Danli Li, and Zhi-Quan
  Luo.
\newblock Direct acceleration of saga using sampled negative momentum.
\newblock In \emph{Proceedings of the 22nd International Conference on
  Artificial Intelligence and Statistics (AISTATS)}, pages 1602--1610, 2019.

\end{thebibliography}

			\clearpage 
			
			\appendix
			
			\clearpage 
			
			\section{How to read the appendix}\label{sec:gen}

			In this section, we provide a few keys for going through the appendix.
			
			\paragraph{How to read the appendix?} 	 Those additional sections provide proofs that were not presented in the core of the paper, and complementary examples of applications. The full content of the appendix is listed in Table~\ref{tab:appendix_summary}.
			
			The appendix is divided in a few sections: each of them focuses on a single optimization setup. For example, \appref{sec:boundedvariances} focuses on stochastic methods under a bounded variance assumption $\E_i\normsq{G(x;i)-f'(x)}\leq \sigma^2$. In each section, we start by presenting the proofs that were not done in the core part of the text (see next paragraphs for discussions on how those proofs were found). Then, for the first few settings, we provide the  derivations of the corresponding linear matrix inequalities and the parameter selection technique.

			\paragraph{Going through the proofs.} The proofs presented in the sequel where \emph{computer-generated}, by numerically solving~\eqref{eq:tt}. They all consists in the exact same ideas: reformulating weighted sums of inequalities. In order to generate the proofs, we mostly used specific inequalities; the so-called \emph{interpolation inequalities}~\citep{taylor2017exact,taylor2017smooth}; for any $L$-smooth $\mu$-strongly convex function $f$ (notation $f\in\FmuL$), those inequalities can be written as
			\begin{equation*}
			\begin{aligned}
			f(x)\geq f(y)&+\inner{f'(y)}{x-y}\\ &+\tfrac{1}{2\left(1-\tfrac\mu L\right)}\left(\tfrac1L\normsq{f'(x)-f'(y)}+\mu\normsq{x-y}-2\tfrac\mu L\inner{f'(x)-f'(y)}{x-y}\right),
			\end{aligned}
			\end{equation*}
			for all $x,y\in\Rd$; whereas in the $L$-smooth convex case they simplify to
			\begin{equation*}
			\begin{aligned}
			f(x)\geq f(y)&+\inner{f'(y)}{x-y}+\tfrac{1}{2L}\normsq{f'(x)-f'(y)}.
			\end{aligned}
			\end{equation*}		
			This choice is essentially motivated by the fact those inequalities are key for reformulating~\eqref{eq:tt} in a tractable way. This is explained in e.g.,~\appref{sec:Vk_pgm} were we used them for formulating the linear matrix inequalities for the gradient method.
			
			In order to simplify most proofs, we could often directly replace some of those \emph{interpolation inequalities} encoding smoothness and convexity by appropriate uses of either simple convexity inequalities, or with the descent lemma (which are both \emph{weaker} than interpolation conditions):
			\begin{equation*}
			\begin{aligned}
			f(x)&\geq f(y)+\inner{f'(y)}{x-y},\\
			f(x)&\geq f(y)-\inner{f'(x)}{y-x}-\tfrac{L}2\normsq{y-x}.
			\end{aligned}
			\end{equation*}
			
			\paragraph{Obtaining and verifying the proofs.} The proofs that were \emph{computer-aided} may seem quite mysterious. However, they can be verified in a systematic manner (essentially verifying that the claimed result can be rewritten as the given weighted sum of inequalities). The weights used in those weighted sums essentially correspond to dual variables used in our reformulation of the problem~\eqref{eq:tt}, and can be either guessed based on numerical solutions (see e.g., \appref{sec:Vk_pgm} for the gradient method; for all interpolation inequalities, the weights in the weighted sum is equal to a feasible choice of the corresponding dual variable $\lambda_{i,j}$), or obtained through symbolic computations. Finally, it is possible to \emph{validate} them numerically e.g., by formulating~\eqref{eq:tt} via the performance estimation toolbox~\citep{taylor2017performance}.

			\edit{\paragraph{Computational cost of the approach} The computational complexity of the approach can be deduced from that of semidefinite programming, see e.g., discussions in~\citep{vandenberghe2005interior}. The resulting complexities depend on (i) the structure of the potentials, (ii) the stochastic setting, (iii) the specific method under consideration, and (iv) whether we use the specific structure of the SDP for solving it. Therefore, we do not provide those complexities in the discussions, and rather give the computational time required to execute the different examples using one of the current state of the art solver~\citep{Article:Mosek} on a laptop, along with the sizes of the LMIs at hand.}						
			\begin{table}[!ht]
				\begin{center}
					{\renewcommand{\arraystretch}{1.3}
						\begin{tabular}{@{}ll@{}}
							\specialrule{2pt}{1pt}{1pt}
							Section & Content \\
							\specialrule{2pt}{1pt}{1pt}
							\edit{\appref{app:rel_works}} & \edit{Existing methodologies for computer-assisted worst-case analyses.}\\
							\hline
							\appref{sec:paramSelec} & High-level explanation of our proposed parameter selection technique.\\
							\hline
							\multirow{5}{.24\linewidth}{\appref{sec:GM_app} (no noise)} &  Proof for the potential for gradient descent (\appref{sec:GM_pot}).\\
							& Potential for the proximal gradient method (\appref{sec:pgm}).\\
							& Automated design of accelerated methods, parameter selection (\appref{sec:opt_smooth}).\\
							& Linear matrix inequalities for gradient method (\appref{sec:Vk_pgm}).\\
							& Linear matrix inequalities for parameter selection (\appref{sec:LMI_design}).\\
							\hline
							\multirow{7}{.24\linewidth}{\appref{sec:boundedvariances}\\(bounded variance)} & Potential for stochastic gradient descent (\appref{sec:sgd_pot}).\\
							& Potential for stochastic gradient descent with averaging (\appref{sec:avgsgd_pot}).\\
							& Potential for stochastic gradient descent with primal averaging (\appref{sec:primalavg_pot}).\\
							& Stochastic gradient evaluated at averaged iterate (\appref{sec:alt_pavging}).\\
							& Convergence rates (\appref{sec:BV_rates}).\\
							& Momentum, dual averaging and line-searches (\appref{sec:sgd_fast}).\\
							& Linear matrix inequalities (\appref{sec:SDP_formulation_bv}).\\
							\hline
							\multirow{3}{.23\linewidth}{\appref{sec:overparameterized}\\(over-parametrization)} & Potential for primal averaging (\appref{sec:paving_pot_overp}). \\
							& Parameter selection (\appref{sec:overp_param_selec}).\\
							& Linear matrix inequalities (\appref{sec:overp_SDP}).\\
							\hline 
							\multirow{2}{.2\linewidth}{\appref{sec:weakgrowth_proofs}\\(weak growth)} & Primal averaging under weak growth conditions (\appref{sec:wcg_pavging}).\\
							& \\
							\hline 
							\multirow{2}{.2\linewidth}{\appref{sec:boundedVaratOpt}\\(variance at $x_\star$)} & Primal averaging under bounded variance at $x_\star$ (\appref{sec:boundedVaratOpt}).\\
							& \\
							\hline
							\multirow{2}{.2\linewidth}{\appref{sec:coordinatedescent}\\(block-coordinate)} & Potential for randomized block-coordinate descent (\appref{sec:CD_sublin}).\\
							& Potential with strong convexity (\appref{sec:CD_lin}).\\
							\specialrule{2pt}{1pt}{1pt}
						\end{tabular}
					}
				\end{center}
				\caption{Organization of the appendix.}\label{tab:appendix_summary}
			\end{table}

			\clearpage 
			
			\edit{\section{Methodologies for computer-assisted worst-case analyses}\label{app:rel_works}
			
			As introduced in~\secref{sec:prior}, two competing strategies rely on using semidefinite programming for studying worst-case performances of first-order methods.
			\begin{itemize}
				\item First, performance estimation problems (PEPs) were introduced by~\cite{drori2014performance}. This methodology relies on formulating the worst-case performance of $N$ iterations of a given first-order method as the solution to an optimization problem. One of the key advantage of this methodology is that it is guaranteed to provide \emph{non-improvable} (or tight) results, due to lossless semidefinite reformulations~\citep{taylor2017smooth}, while being applicable to a wide range of settings~\citep{drori2014contributions,taylor2017exact}. This methodology was initially tailored for studying methods with sublinear convergence rates, but linear rates can be obtained as well, through smaller SDPs~\citep{taylor2018exact,ryu2018operator}. The methodology is available through the performance estimation toolbox~\citep{taylor2017performance}, and was used to develop optimized methods~\citep{drori2014performance,drori2016optimal,kim2016optimized,kim2018another,kim2018generalizing,kim2018optimizing,drori2018efficient} and lower bounds~\citep{drori2017exact}.
				
				The main attractive features of this framework are that (i) feasible points to primal PEPs correspond to lower bounds (i.e., functions) on which the given algorithms behave badly, whereas (ii) feasible points to the dual PEPs correspond to upper bounds on the worst-case performance of the given methods. The main inherent difficulty is to convert numerics into analytical proofs. This is mostly due to the fact all iterations are treated at once, which typically implies playing with large semidefinite matrices. Those matrices may scale particularly badly in complicated optimization settings, such as in stochastic setups. 
				
				\item The second approach is based on \emph{integral quadratic constraints} (IQCs). Their uses for studying optimization methods is due to~\cite{lessard2016analysis}. This framework was initially tailored for studying settings with linear convergence rates, through the use of smaller SDPs.
				Recent works formally linked IQCs with performance estimation~\citep{taylor2018lyapunov}, which can be seen as feasible points to PEPs specifically designed to look for Lyapunov functions with the smallest possible linear convergence rate. 
				
				Recent works established that IQCs could also be used for sublinear rates~\citep{hu2017dissipativity,fazlyab2018analysis}, to study stochastic methods~\citep{hu2017analysis,hu2017unified} and to design new first-order algorithms~\citep{van2018fastest,cyrus2018robust}.
			\end{itemize}
				In this work, we use the exact same technique as in PEPs for performing the lossless verification of the ``potential inequality''~\eqref{eq:pot}. The resulting proofs are much simpler, while keeping some \emph{a priori} guarantees, as we know \emph{a priori} that there is no way of ending up with better worst-case guarantees with the same potential-based proof structure. This allows using the methodology with e.g., randomness and stochasticity.
				
				Compared to IQCs, the approach taken here allows (i) studying sublinear rates with more general types of potential functions, beyond standard guarantees on $f(x_N)-f_\star$ and $\norm{x_0-x_\star}$, while only having to specify the content of the potential, (ii) obtaining theoretical guarantees of ending up with the best possible worst-case bounds (for the given class of potentials) for the chosen number of iterations, and (iii) allows dealing with many models involving randomness and stochasticity, even without strong convexity. The lossless reformulation is a key feature of our approach, as it allows guaranteeing that the methodology cannot fail verifying a potential function that is true. Hence, failure is still informative in understanding the behavior of the algorithm at hand.}
			
			\clearpage 
			\section{A parameter selection technique}\label{sec:paramSelec}
			
			In this section, we provide a high-level overview of a technique we used for performing automatic step-size selection, whereas the details are delayed to the following sections where the technique is used. The main idea is to try to optimize algorithmic parameters for improving worst-case performance guarantees. However, from the derivations of the linear matrix inequalities of next sections, finding at the same time a valid sequence of potentials and an optimized sequence of steps actually requires solving a set of bilinear matrix inequalities (BMIs), which are intractable in general~\citep{toker1995np}. One way to work around this difficulty is to study a variant of algorithm~\eqref{eq:algo} where the parameters $\{(\alpha_k,\alpha_k',\beta_k,\beta_k',\delta_k,\gamma_k,\gamma_k',\epsilon_k) \}_k$ are chosen by appropriate span-search procedures. One can then make use of the technique developed by~\citet{drori2018efficient} for formulating~\eqref{eq:tt} (most of the time \emph{relaxed} versions of it) into a LMI (feasibility problem), with the particularity that for any feasible point to this LMI, one can reconstruct an algorithm of the form~\eqref{eq:algo} (without span-searches) that achieves the same performances.	
			
			The technique relies on two elements:
			\begin{itemize}
				\item choice of an \emph{idealized} algorithm, typically using (possibly unrealistic) line-searches,
				\item choice of a family of potentials that easily allows optimizing the algorithmic parameters while looking for a sequence of valid potentials. The parameters that can not be optimized are replaced by (possibly unrealistic, see below) line-search procedures.
			\end{itemize}
			The technique is inspired by~\citet{drori2018efficient} and original developments related to accelerated methods~\citet{nemirovski1982orth,Nesterov:1983wy}. The main thing to keep in mind is that we would ideally want to optimize the algorithmic parameters for improving its worst-case guarantees. For explaining the strategy, let us consider the following example---which we carry out in \secref{sec:opt_smooth}---: consider the first-order method given by
			\begin{equation*}
			\begin{aligned}
			y_{k+1}&= (1-\tau_k) x_k+\tau_k z_k,\\
			x_{k+1}&=y_{k+1}-\alpha_k f'(y_{k+1}),\\
			z_{k+1}&=(1-\delta_k) y_{k+1}+\delta_k z_k -\gamma_k f'(y_{k+1}),\\
			\end{aligned}
			\end{equation*}
			for which we wish to optimize parameters $\{(\tau_k,\alpha_k,\delta_k,\gamma_k)\}_k$. We also consider a specific family of potentials (discussed hereafter):
			\begin{equation}\label{eq:FGM_pot_choice}
			\phi_k^f=\begin{pmatrix}x_k-x_\star\\ f'(x_k)\end{pmatrix}^\top \left[Q_k\otimes I_d\right]\begin{pmatrix}x_k-x_\star\\ f'(x_k)\end{pmatrix} + d_k\, (f(x_k)-f_\star)+a_k \normsq{z_k-x_\star},
			\end{equation}
			with $Q_k\in\Sb^2$ (space of $2\times 2$ symmetric matrices), and picking $\phi_0^f=\tfrac{L}2 \normsq{x_0-x_\star}$ and $\phi_N^f=d_N \,(f(x_N)-f_\star)$. As our goal is to optimize the parameter schedule $\{(\tau_k,\alpha_k,\delta_k,\gamma_k)\}_k$, a natural thing to try is to solve
			\begin{equation}\label{eq:FGM_lyap_opt_coefs}
			\max_{\{(\tau_k,\alpha_k,\delta_k,\gamma_k)\}_k}\quad \max_{\phi_1^f,\hdots,\phi_{N-1}^f,d_N} d_N \st (\phi_0^f,\phi_1^f)\in\V_0,\hdots,(\phi_{N-1}^f,\phi_N^f)\in\V_{N-1}.
			\end{equation}
			However, although there might be other workarounds, this problem turns out to have $N$ BMIs. Instead of solving this problem, the workaround we propose is to study the algorithm
			\begin{equation}\label{eq:FGM_ELS}
			\begin{aligned}
			y_{k+1}&=\argm{x}{f(x)\, \st\, x\in x_k+\sspan\{z_k-x_k\}},\\
			x_{k+1}&=\argm{x}{f(x)\, \st\, x\in y_{k+1}+\sspan \{f'(y_{k+1})\}},\\
			z_{k+1}&=(1-\delta_k) y_{k+1}+\delta_k z_k -\gamma_k f'(y_{k+1}),\\
			\end{aligned}
			\end{equation}
			for which one can formulate relaxed versions of~\eqref{eq:tt} (i.e., sufficient conditions for verifying a potential) using ideas developed below. By denoting $\tilde{\V}_k$ the set of pairs $(\phi_k^f,\phi_{k+1}^f)$ of potentials that can be verified for~\eqref{eq:FGM_ELS} with our sufficient conditions (see below), we propose to solve the following alternative to~\eqref{eq:FGM_lyap_opt_coefs}:
			\begin{equation}\label{eq:FGM_lyap_ELS}
			d_N^{(\textrm{LSearch})}=\max_{\{(\delta_k,\gamma_k)\}_k}\quad \max_{\phi_1^f,\hdots,\phi_{N-1}^f,d_N} d_N \st (\phi_0^f,\phi_1^f)\in\tilde{\V}_0,\hdots,(\phi_{N-1}^f,\phi_N^f)\in\tilde{\V}_{N-1},
			\end{equation}
			from which one can recover a policy $\{(\tau_k,\alpha_k,\delta_k,\gamma_k)\}_k$ with the same $d_N^{(\textrm{LSearch})}$ is attained, as illustrated below. All steps involved in the analysis of~\eqref{eq:FGM_ELS}---similar in spirit with those presented for vanilla gradient in \secref{sec:GM}--- are presented in \secref{sec:opt_smooth}. Before going to the next section, note that it is straightforward to verify $\tilde{\V}_k\subseteq\V_k$, as for any pair $(\phi_k^f,\phi_{k+1}^f)$ we have
			\[ (\phi_k^f,\phi_{k+1}^f) \in\tilde{\V}_k \Rightarrow\, (\phi_k^f,\phi_{k+1}^f) \in {\V}_k\]
			(in other words, all potentials that can be verified are potentials). However, in general we do not have $\tilde{\V}_k={\V}_k$, meaning that the analysis can fail even though a good sequence of potentials with the desired structure might exist---given a chosen structure of potentials, this problem is not present in the analysis framework presented in the core of the paper. Another reason why it might fall is when the method with line-search does not have nice worst-case guarantees.
			\paragraph{Transforming line-search procedures to fixed-step policies.} Let us provide an example of the use of a method for designing a gradient method with optimal step-size for smooth strongly convex minimization, as provided by~\citet{de2017worst}. That is, consider the problem of minimizing a smooth strongly convex function $f\in\FmuL$. We show how to let the computer choose an appropriate step-size $\delta_k$ in a gradient descent scheme, by studying the line-search variant
			\[x_{k+1}=\argm{x}{f(x)\, \st \, x\in x_k+\sspan\{f'(x_{k}) \}} .\] For keeping things as simple as possible, let us proceed with the potential: $\phi_k^f=d_k (f(x_k)-f_\star)$; in the following lines, we illustrate the technique for choosing the step-size $\delta_k$ achieving $d_{k+1}=\rho^{-1}d_k$ (with $0<\rho<1$ being the convergence rate) with the smallest possible $\rho$. Let us first note that the rate of convergence $\rho$ of the line-search procedure for the family of potentials that we chose satisfies by definition
			\begin{equation*}
			\begin{aligned}\rho \defeq \max_{\substack{x_k,x_{k+1},\\ f\in\FmuL}}&\frac{f(x_{k+1})-f_\star}{f(x_k)-f_\star} \\ &\st x_{k+1} = x_k-\delta_k f'(x_k),\, \delta_k=\argm{\delta_k\in\R}{f(x_k-\delta_k f'(x_k))},
			\end{aligned}
			\end{equation*}
			\correc{(note that we optimize over $f\in\FmuL$ irrespective of the ambient dimension of worst-case problems (see notations in  \secref{s:prelim}), that is, the maximization problem could be formulated more explicitly in the variables $x_{k},x_{k+1}\in\Rd$, $f\in\FmuL(\Rd)$ and $d\in\mathbb{N}$)} which can be upper-bounded by  (using optimality conditions of the line-search procedure)
			\begin{equation}\label{eq:ELS}
			\rho \leq \max_{\substack{x_k,x_{k+1},\\ f\in\FmuL}}\frac{f(x_{k+1})-f_\star}{f(x_k)-f_\star} \st \inner{f'(x_{k+1})}{f'(x_k)}= 0,\ \inner{f'(x_{k+1})}{x_{k+1}-x_k} = 0.
			\end{equation}
			It turns out that~\eqref{eq:ELS} often holds with equality, motivating the following developments. {As a next step, one can then get an upper bound from the use of a Lagrangian relaxation with $\lambda_1,\lambda_2\in\R$:
				\[\rho\, \leq\, \bar{\rho}(\lambda_1,\lambda_2) \defeq \max_{\substack{x_k, x_{k+1},\\ f\in\FmuL}} \left\{\frac{f(x_{k+1})-f_\star}{f(x_k)-f_\star} +\lambda_1 \inner{ f'(x_{k+1})}{ f'(x_k)}+\lambda_2 \inner{f'(x_{k+1})}{x_{k+1}-x_k}\right\},\]}
			{and from the same pair $(\lambda_1,\lambda_2)$, one can create an intermediary problem
				\begin{equation*}
				\begin{aligned}
				\rho &\leq{\max_{\substack{x_k,x_{k+1},\\ f\in\FmuL}}\left\{\frac{f(x_{k+1})-f_\star}{f(x_k)-f_\star} \st \lambda_1\inner{f'(x_{k+1})}{f'(x_k)} +\lambda_2\inner{f'(x_{k+1})}{x_{k+1}-x_k} = 0\right\}}\\ &\leq \bar{\rho}(\lambda_1,\lambda_2).
				\end{aligned}
				\end{equation*}
				\[\]}
			So, for any pair $\lambda_1,\lambda_2\in\R$ we get:
			\begin{itemize}
				\item an upper bound $\bar{\rho}(\lambda_1,\lambda_2)$ (possibly $+\infty$ if the choice for $\lambda_1,\lambda_2$ was not appropriate) on $\rho$,
				\item as a consequence, all methods satisfying $\inner{ f'(x_{k+1})}{ \lambda_1 f'(x_k)+ \lambda_2  (x_{k+1}-x_k)} = 0$ benefits from  convergence rate at most $\bar{\rho}(\lambda_1,\lambda_2)$. In particular, the method $x_{k+1}=x_k-\tfrac{\lambda_2}{\lambda_1}f'(x_k)$ satisfies the previous equality.
				\item Finally, if there exists a choice $\lambda_1^\star,\lambda_2^\star$ such that	\[\rho=\bar{\rho}(\lambda_1^\star,\lambda_2^\star),\]
				then, assuming $\lambda_2^\star\neq 0$ (this is reasonable as otherwise $\inner{f'(x_{k+1})}{x_{k+1}-x_k} = 0$ would not be used in the analysis), the method $x_{k+1}=x_k-\tfrac{\lambda_1^\star}{\lambda_2^\star}f'(x_k)$ benefits from the same worst-case guarantee as the line-search procedure. This phenomenon actually (maybe surprisingly) occurs at least in standard smooth and non-smooth convex optimization settings~\citep{drori2018efficient}. In the following sections, we use the same strategy in slightly more complicated settings---where we do not check whether formulations corresponding to~\eqref{eq:ELS} should hold with equality or not. 
			\end{itemize}
			
			\paragraph{Choice of the potential family.} The choice of the family of potentials into consideration plays a crucial role in the parameter selection process. For example, choice~\eqref{eq:FGM_pot_choice} for method~\eqref{eq:FGM_ELS} allows to easily optimize the coefficients $\delta_k$ and $\gamma_k$. The reason is technical (everything lies in the LMI formulation, which we delay to later sections), but the consequence is relatively simple: the fact $z_{k}-x_\star$ appears only in a norm allows writing the LMI reformulation of the potential inequality ``$\phi_{k+1}^f\leq\phi_k^f$'' in a way that easily permits to optimize both $\delta_k$ and $\gamma_k$ (see \appref{sec:LMI_design}). 
			
			\paragraph{Details and extensions.} We provide detailed developments relying on this technique in later sections. 		In its simplest form, the technique is illustrated in the deterministic smooth convex minimization setting in \secref{sec:opt_smooth}, among others on algorithm~\eqref{eq:FGM_lyap_opt_coefs}.
			
			The technique can also be adapted to other settings, such as stochastic methods, coordinate descent, or finite sums. Examples that are not provided here include the application to noise models satisfying strong growth conditions; in the latter case, the parameter selection technique allows obtaining accelerated methods similar to those of~\citet{vaswani2018fast}.

			\clearpage 
			
			\section{Gradient method}\label{sec:GM_app}
			In this section, we present the proof of Theorem~\ref{thm:GM_Lyap} for the gradient method. For simplicity, we present the result for vanilla gradient method in the core of the paper; the proximal case is presented in \appref{sec:pgm} and the semidefinite reformulation of~\eqref{prop:verify_potential} in \appref{sec:Vk_pgm}. Finally, we present the parameter selection technique to devise variants of accelerated gradient methods in \appref{sec:opt_smooth}. The corresponding LMIs are presented in \secref{sec:LMI_design}.
			
			The codes implementing the LMI formulations and numerics presented hereafter are provided in \secref{sec:ccl}.
			\subsection{Proof of Theorem~\ref{thm:GM_Lyap}}\label{sec:GM_pot}
			The proof follows the same lines as previous works on performance estimation problems (see e.g.,~\citet[Section 4]{de2017worst}), and only consists in reformulating a linear combination of inequalities.
			\begin{proof} Combine the following inequalities with their corresponding weights.
				\begin{itemize}
					\item Convexity and smoothness between $x_k$ and $x_\star$ with weight $\lambda_1=2 L$
					\[f_\star\geq f(x_k) + \inner{f'(x_k)}{x_\star-x_k}+\tfrac{1}{2L}\normsq{f'(x_k)}, \]
					\item convexity and smoothness between $x_{k+1}$ and $x_k$ with weight $\lambda_2=2d_k+2L(2+b_k)$
					\[f(x_k)\geq f(x_{k+1})+\inner{f'(x_{k+1})}{x_k-x_{k+1}}+\tfrac{1}{2L}\normsq{f'(x_{k+1})-f'(x_k)}, \]
					\item convexity between $x_{k}$ and $x_{k+1}$ with weight $\lambda_3=2L(1+b_k)+d_k$
					\[f(x_{k+1}) \geq f(x_k) +\inner{f'(x_k)}{x_{k+1}-x_k}. \]
				\end{itemize}
				By substituting $x_{k+1}=x_k-\tfrac1L f'(x_k)$, one can easily verify that the corresponding weighted sum can be reformulated exactly as the desired result:
				\begin{equation*}
				\begin{aligned}
				0\geq &\lambda_1 \left[f(x_k)-f_\star+ \inner{f'(x_k)}{x_\star-x_k}+\tfrac{1}{2L}\normsq{f'(x_k)}\right]\\&+\lambda_2 \left[f(x_{k+1})-f(x_k)+\inner{f'(x_{k+1})}{x_k-x_{k+1}}+\tfrac{1}{2L}\normsq{f'(x_{k+1})-f'(x_k)}\right]\\ & +\lambda_3 \left[f(x_k)-f(x_{k+1}) +\inner{f'(x_k)}{x_{k+1}-x_k} \right] \\
				=& (d_k+2L)(f(x_{k+1})-f_\star)+L^2\normsq{x_{k+1}-x_\star}+(2+\tfrac{d_k}{L}+b_k)\normsq{f'(x_{k+1})}\\&-d_k(f(x_k)-f_\star)-L^2\normsq{x_k-x_\star}-b_k\normsq{f'(x_k)},
				\end{aligned}
				\end{equation*}
				leading to
				\begin{equation*}
				\begin{aligned}
				(d_k+2L)&(f(x_{k+1})-f_\star)+L^2\normsq{x_{k+1}-x_\star}+(2+\tfrac{d_k}{L}+b_k)\normsq{f'(x_{k+1})}\\
				&\leq d_k(f(x_k)-f_\star)+L^2\normsq{x_k-x_\star}+b_k\normsq{f'(x_k)}.
				\end{aligned}
				\end{equation*}
			\end{proof}
			\subsection{Proximal gradient method}\label{sec:pgm}
			In this section, we consider the problem
			\[\min_{x\in\Rd} \left\{F(x)\equiv f(x)+h(x)\right\}, \]
			where $f:\Rd\rightarrow\R $ is convex and smooth ($f\in\FL$) and $h:\Rd\rightarrow\R\cup\{\infty\}$ is closed, proper and convex (notation $h\in\Fccp$), where the proximal operator of $h$ is readily available:
			\[\prox{\gamma h}{y}=\argm{x\in\Rd}{\gamma h(x)+\tfrac12\normsq{y-x}}.\]
			Nearly the same potential as that for the gradient method holds for the proximal gradient method (note that we have assumed $\dom\,f=\Rd$ for simplicity; other inequalities have to be used if it is not the case~\citep{de2017worst2,drori2018properties}). 
			\begin{theorem}{{\bf\color{red}[Updated: cleaner statement of the theorem; proof does not change]}}\label{thm:PGM_Lyap}
				Let $f\in\FL$, $h\in\Fccp$ (class of closed, proper and convex functions), \edittwo{let $x_k\in\Rd$ be satisfying $\partial h(x_k)\neq\emptyset$}, and $x_{k+1}=\prox{h/L}{x_k-\tfrac1L f'(x_k)}$. {\color{red} For all $b_k,d_k\geq0$ and $F'(x_{\color{red}k})\in\partial F(x_{\color{red}k})$, there exists $F'(x_{k+1})\in\partial F(x_{k+1})$} such that inequality $\phi_{k+1}^F(x_{k+1})\leq \phi_k^F(x_{k})$ holds with 
				\[\phi_k^F=d_k (F(x_k)-F_\star)+b_k\normsq{F'(x_k)}+ L^2\normsq{x_k-x_\star},\]
				{\color{red} and }$b_{k+1}=1+\tfrac{d_{k}}{L}+b_k$ and $d_{k+1}= 2 L+d_k$. In particular, the above inequality holds when choosing $F'(x_{k+1})=L(x_{k}-x_{k+1})-f'(x_{k})+f'(x_{k+1})$ (this choice is natural as it corresponds to using the particular subgradient of $h$ that was used in the proximal operation).
			\end{theorem}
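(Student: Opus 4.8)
The plan is to mimic the proof of Theorem~\ref{thm:GM_Lyap} verbatim, adapting it to the composite setting by replacing the gradient step with the proximal‑gradient step and accounting for the subgradient of $h$. Concretely, set $x_{k+1}=\prox{h/L}{x_k-\tfrac1L f'(x_k)}$, which by the definition of the proximal operator yields a subgradient $h'(x_{k+1})\in\partial h(x_{k+1})$ satisfying $h'(x_{k+1})=L(x_k-x_{k+1})-f'(x_k)$; hence the particular subgradient $F'(x_{k+1})=f'(x_{k+1})+h'(x_{k+1})=L(x_k-x_{k+1})-f'(x_k)+f'(x_{k+1})$ of $F$ is the natural object appearing in the statement. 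First I would introduce three inequalities analogous to those in \appref{sec:GM_pot}, but now each split into an $f$‑part and an $h$‑part: (i) convexity/smoothness of $f$ between $x_k$ and $x_\star$ combined with convexity of $h$ between $x_k$ and $x_\star$ (using the subgradient $h'(x_k):=F'(x_k)-f'(x_k)$), with an appropriate weight $\lambda_1$; (ii) convexity/smoothness of $f$ between $x_{k+1}$ and $x_k$ plus convexity of $h$ between $x_{k+1}$ and $x_k$, with weight $\lambda_2$; (iii) convexity of $F$ between $x_k$ and $x_{k+1}$ with weight $\lambda_3$.

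Next I would choose the weights by analogy with the gradient case (so $\lambda_1=2L$, $\lambda_2=2d_k+2L(1+b_k)$, $\lambda_3=2L(1+b_k)+d_k$, up to the modification forced by the extra $-\tfrac{L}{2}\normsq{x_{k+1}-x_k}$‑type terms coming from smoothness of $f$ only, not of $F$), substitute the identity $L(x_k-x_{k+1})=f'(x_k)+h'(x_{k+1})$ everywhere, and expand. The claim is that the resulting weighted sum collapses exactly into
\[
(d_k+2L)(F(x_{k+1})-F_\star)+L^2\normsq{x_{k+1}-x_\star}+\big(1+\tfrac{d_k}{L}+b_k\big)\normsq{F'(x_{k+1})}\le d_k(F(x_k)-F_\star)+L^2\normsq{x_k-x_\star}+b_k\normsq{F'(x_k)},
\]
which is precisely $\phi_{k+1}^F(x_{k+1})\le\phi_k^F(x_k)$ with the stated recursions $b_{k+1}=1+\tfrac{d_k}{L}+b_k$ and $d_{k+1}=2L+d_k$. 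The bookkeeping difference from Theorem~\ref{thm:GM_Lyap} is the coefficient $1$ instead of $2$ in the $b$‑recursion, which traces back to the fact that smoothness is available for $f$ alone, so only one $\tfrac1{2L}\normsq{f'(x_{k+1})-f'(x_k)}$ term enters rather than a term involving the full $\normsq{F'(x_{k+1})}$; identifying exactly how that single smoothness term combines with the proximal identity to regenerate $\normsq{F'(x_{k+1})}$ with coefficient $1$ is the one place requiring care.

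The main obstacle I anticipate is purely algebraic: verifying that after substituting $L(x_k-x_{k+1})=f'(x_k)+h'(x_{k+1})$ the cross terms in $f'(x_k)$, $f'(x_{k+1})$, $h'(x_k)$, $h'(x_{k+1})$, $x_k-x_\star$ and $x_{k+1}-x_\star$ cancel to leave only the six desired squared/linear quantities — in particular confirming that no residual $\inner{f'(x_{k+1})}{h'(x_{k+1})}$ or $\normsq{h'(x_{k+1})}$ cross terms survive, since $\normsq{F'(x_{k+1})}=\normsq{f'(x_{k+1})+h'(x_{k+1})}$ must reassemble cleanly. Since the proximal operator guarantees only $h'(x_{k+1})\in\partial h(x_{k+1})$ (and $\partial h(x_k)\neq\emptyset$ is assumed so that $F'(x_k)$ exists), all inequalities used are valid, and monotonicity of $\{b_k\},\{d_k\}\ge 0$ is immediate from the recursions with nonnegative increments. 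Once the identity is verified, the telescoping argument of \secref{sec:Lyap} applies unchanged, so I would not belabour it; the statement about the natural subgradient choice is then just the observation that the $h'(x_{k+1})$ used throughout the computation is exactly the one produced by the proximal step.
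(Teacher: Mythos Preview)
Your high-level plan (mimic Theorem~\ref{thm:GM_Lyap}, split each inequality into an $f$-part and an $h$-part, substitute the proximal identity $L(x_k-x_{k+1})=f'(x_k)+s_{k+1}$) is exactly right, but two specifics would block the computation as written.

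First, bundling the $f$- and $h$-inequalities into three combined $F$-inequalities with common weights does not give enough freedom. The paper uses \emph{six} inequalities with genuinely different weights on the $f$- and $h$-parts: e.g.\ convexity of $f$ from $x_k$ to $x_{k+1}$ carries weight $2Lb_k+d_k$ while convexity of $h$ from $x_{k+1}$ to $x_k$ (using $s_k$) carries only $2Lb_k$; likewise the $f$-interpolation inequality from $x_{k+1}$ to $x_k$ has weight $2d_k+2L(1+b_k)$ whereas the $h$-convexity from $x_k$ to $x_{k+1}$ (using $s_{k+1}$) has weight $2Lb_k+d_k$. In particular the convexity inequality linking $h$ to $x_\star$ is taken at $x_{k+1}$ with subgradient $s_{k+1}$, \emph{not} at $x_k$ with $s_k$ as you propose; this is what makes $L^2\normsq{x_{k+1}-x_\star}$ assemble cleanly through the proximal identity.

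Second, your expectation that the weighted sum ``collapses exactly'' is too optimistic: even with the correct six weights, the reformulation leaves a residual $+\,b_k\normsq{s_{k+1}-s_k}$ on the right-hand side, which is then discarded by nonnegativity. So the cross terms you were worried about (involving $s_k$ and $s_{k+1}$) do survive, but with a sign that helps rather than hurts. The resolution is not cancellation but a one-sided bound, and this is also what accounts for the drop from $2$ to $1$ in the $b$-recursion.
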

			In particular, the previous theorem establishes that {{\bf\color{red}[Updated: missing $L$]}}
			\[ \phi_k^F = (2k+1){\color{red}L} (F(x_k)-F(x_\star))+k(k+1)\normsq{F'(x_k)}+L^2\normsq{x_k-x_\star},\]
			is a potential for the proximal gradient method with step-size $1/L$.
			\begin{proof} Combine the following inequalities with their corresponding weights. We denote by $s_{k+1}\in\partial h(x_{k+1})$ the specific subgradient used in the proximal operation, i.e., such that $x_{k+1}=x_k-\tfrac1L(f'(x_k)+s_{k+1})$, and $s_k\in\partial h(x_k)$ any subgradient of $h$ at $x_k$. We also specifically choose $F'(x_{k+1})=f'(x_{k+1})+s_{k+1}\in\partial F(x_{k+1})$ and $F'(x_k)=f'(x_k)+s_k\in \partial F(x_k)$.
				\begin{itemize}
					\item Convexity of $f$ between $x_k$ and $x_\star$ with weight $\lambda_1=2 L$
					\[f_\star\geq f(x_k) + \inner{f'(x_k)}{x_\star-x_k}, \]
					\item convexity and smoothness of $f$ between $x_{k+1}$ and $x_k$ with weight $\lambda_2=2d_k+2L(1+b_k)$
					\[f(x_k)\geq f(x_{k+1})+\inner{f'(x_{k+1})}{x_k-x_{k+1}}+\tfrac{1}{2L}\normsq{f'(x_{k+1})-f'(x_k)}, \]
					\item convexity of $f$ between $x_{k}$ and $x_{k+1}$ with weight $\lambda_3=2L b_k+d_k$
					\[f(x_{k+1}) \geq f(x_k) +\inner{f'(x_k)}{x_{k+1}-x_k}, \]
					\item convexity of $h$ between $x_{k+1}$ and $x_\star$ with weight $\lambda_4=2L$
					\[h(x_\star)\geq h(x_{k+1})+\inner{s_{k+1}}{x_\star-x_{k+1}},\]
					\item convexity of $h$ between $x_k$ and $x_{k+1}$ with weight $\lambda_5=2L b_k+d_{\color{red}k}$
					\[h(x_{k})\geq h(x_{k+1})+\inner{s_{k+1}}{x_k-x_{k+1}},\]
					\item convexity of $h$ between $x_{k+1}$ and $x_k$ with weight $\lambda_6=2 L b_k$
					\[h(x_{k+1})\geq h(x_k)+\inner{s_k}{x_{k+1}-x_k}.\]
				\end{itemize}
				By substituting $x_{k+1}=x_k-\tfrac1L(f'(x_k)+s_{k+1})$, one can easily verify that the corresponding weighted sum can be reformulated exactly as the desired result plus a positive term:
				\begin{equation*}
				\begin{aligned}
				0\geq &\lambda_1 \left[f(x_k) - f_\star + \inner{f'(x_k)}{x_\star-x_k}\right]\\&+\lambda_2 \left[f(x_{k+1})-f(x_k)+\inner{f'(x_{k+1})}{x_k-x_{k+1}}+\tfrac{1}{2L}\normsq{f'(x_{k+1})-f'(x_k)}\right]\\ & +\lambda_3 \left[f(x_k)-f(x_{k+1}) +\inner{f'(x_k)}{x_{k+1}-x_k} \right] \\
				&+\lambda_4 \left[ h(x_{k+1})-h(x_\star)+\inner{s_{k+1}}{x_\star-x_{k+1}} \right] \\
				&+\lambda_5 \left[h(x_{k+1})-h(x_{k})+\inner{s_{k+1}}{x_k-x_{k+1}}\right] \\
				&+\lambda_6 \left[h(x_k)-h(x_{k+1})+\inner{s_k}{x_{k+1}-x_k}\right] \\
				=& (d_k+2L)(F(x_{k+1})-F_\star)+L^2\normsq{x_{k+1}-x_\star}+(1+\tfrac{d_k}{L}+b_k)\normsq{F'(x_{k+1})}\\&-d_k(F(x_k)-F_\star)-L^2\normsq{x_k-x_\star}-b_k\normsq{F'(x_k)}+b_k \normsq{s_{k+1}-s_k},
				\end{aligned}
				\end{equation*}
				leading to the desired result
				\begin{equation*}
				\begin{aligned}
				(d_k+2L)&(F(x_{k+1})-F_\star)+L^2\normsq{x_{k+1}-x_\star}+(1+\tfrac{d_k}{L}+b_k)\normsq{F'(x_{k+1})}\\&\leq d_k(F(x_k)-F_\star)+L^2\normsq{x_k-x_\star}+b_k\normsq{F'(x_k)}-b_k \normsq{s_{k+1}-s_k},\\
				&\leq   d_k(F(x_k)-F_\star)+L^2\normsq{x_k-x_\star}+b_k\normsq{F'(x_k)}.
				\end{aligned}
				\end{equation*}
			\end{proof}
		
			\subsection{Design of accelerated methods} \label{sec:opt_smooth}
			There are many different variants of accelerated gradient methods (see e.g.,~\citet{tseng2008accelerated}), particularly for smooth unconstrained optimization in the Euclidean setting. Here are two examples that can be obtained through the line-search strategy presented in \secref{sec:paramSelec}.
			
			The codes implementing the LMI formulations and numerics below are provided in \secref{sec:ccl}.
			\paragraph{Design of a first accelerated method.} Let us first apply our step-size selection technique to the following first-order method involving three sequences:
			\begin{equation}\label{eq:fgm_3seqs}
			\begin{aligned}
			y_{k+1}&= (1-\tau_k) x_k+\tau_k z_k,\\
			x_{k+1}&=y_{k+1}-\alpha_k f'(y_{k+1}),\\
			z_{k+1}&=(1-\delta_k) y_{k+1}+\delta_k z_k -\gamma_k f'(y_{k+1}),\\
			\end{aligned}
			\end{equation}
			relying on the alternate version involving line-searches and optimization of the last step:
			\begin{equation}\label{eq:fgm_3seqs_LS}
			\begin{aligned}
			y_{k+1}&=\argm{x}{f(x)\, \st\, x\in x_k+\sspan\{z_k-x_k\}},\\
			x_{k+1}&=\argm{x}{f(x)\, \st\, x\in y_{k+1}+\sspan \{f'(y_{k+1})\}},\\
			z_{k+1}&=(1-\delta_k) y_{k+1}+\delta_k z_k -\gamma_k f'(y_{k+1}).
			\end{aligned}
			\end{equation}
			We provide our LMI encoding sufficient conditions for verifying ``$\phi_{k+1}^f\leq \phi_k^f$'' in the next section (\secref{sec:LMI_design}) for the potential			
			\begin{equation}\label{eq:pot_fgm1}
			\begin{aligned}
			\phi_k^f= &\begin{pmatrix}x_k-x_\star\\ f'(x_k) \end{pmatrix}^\top \left[Q_k\otimes I_d\right]\begin{pmatrix}x_k-x_\star\\ f'(x_k) \end{pmatrix}+a_k \normsq{z_k-x_\star} + d_k\,
			(f(x_k)-f_\star),
			\end{aligned}	
			\end{equation}
			with $Q_k\in\Sb^2$ and the choice $\phi_0^f=\tfrac{L}2\normsq{{\color{red}z_0}-x_\star}$, $\phi_N^f=d_N\, (f(x_N)-f_\star)$. Note that one could add arbitrary sequences to~\eqref{eq:fgm_3seqs}, and other states to $\phi_k^f$ and still use the same tricks (see below for another example).
			
			Let us denote $\tilde{\V}_k$ the set of pairs $(\phi^f_{k+1},\phi_k^f)$ for which we can verify the inequality $\phi^f_{k+1}\leq \phi_k^f$ holds for algorithm~\eqref{eq:fgm_3seqs_LS} (see \secref{sec:LMI_design}).
			Similar in spirit with the approach of \secref{sec:GM} for designing a potential for vanilla gradient descent, we can now simulteneously design potential and a sequence of parameters for~\eqref{eq:fgm_3seqs}, for example by solving
			\begin{equation}\label{eq:AGM_lyap_design}
			\edit{\max_{\{(\delta_k,\gamma_k)\}_k}}\max_{\phi_1^f,\hdots,\phi_{N-1}^f,d_N} d_N\, \st\, (\phi_0^f,\phi_1^f)\in \tilde{\V}_0,\hdots,(\phi_{N-1}^f,\phi_N^f)\in\tilde{\V}_{N-1}.
			\end{equation}
			As an example, we provide the results obtained by solving~\eqref{eq:AGM_lyap_design} for $N=100$ on Figure \ref{fig:design}. Carrying a few simplifications in a similar manner to those from \secref{sec:GM}, one can arrive easily arrive to Theorem~\ref{thm:fgm_smooth} (simpler expressions below).
			\begin{center}
				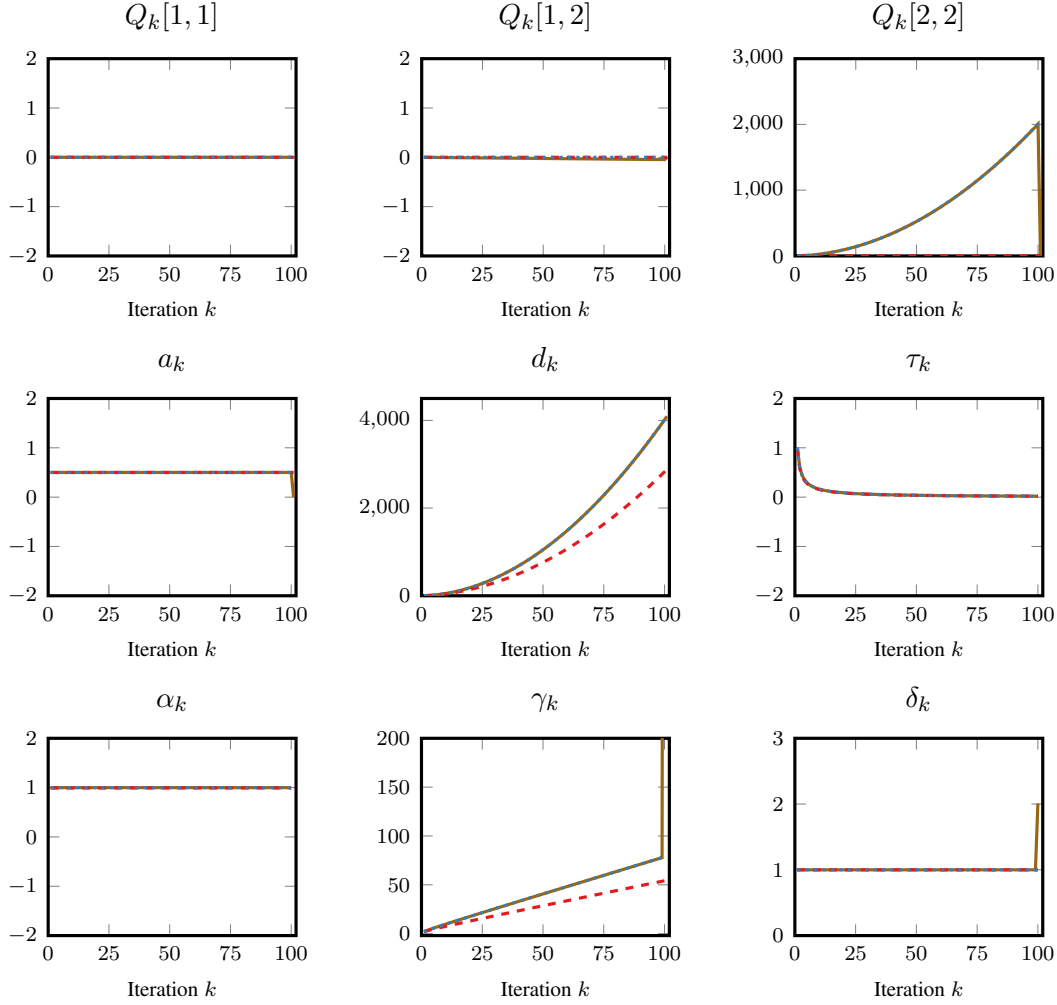
\begin{figure}[!ht]
					\begin{tabular}{rrr}
						\begin{tikzpicture}
						\begin{axis}[plotOptions, title={$Q_k[1,1]$}, ymin=-2, ymax=2,xmin=0,xmax=102,width=.32\linewidth]
						\addplot [colorP5] table [x=k,y=ak] {Data/FGM1_raw.dat};
						\addplot [colorP2, dashed] table [x=k,y=ak] {Data/FGM1_pure.dat};
						\addplot [colorP1, dotted] table [x=k,y=ak] {Data/FGM1_inter.dat};
						\end{axis}
						\end{tikzpicture} &\begin{tikzpicture}
						\begin{axis}[plotOptions, title={$Q_k[1,2]$}, ymin=-2, ymax=2,xmin=0,xmax=102,width=.32\linewidth]
						\addplot [colorP5] table [x=k,y=ck] {Data/FGM1_raw.dat};
						\addplot [colorP2, dashed] table [x=k,y=ck] {Data/FGM1_pure.dat};
						\addplot [colorP1, dotted] table [x=k,y=ck] {Data/FGM1_inter.dat};
						\end{axis}
						\end{tikzpicture} &
						\begin{tikzpicture}
						\begin{axis}[plotOptions, title={$Q_k[2,2]$}, ymin=-2, ymax=3000,xmin=0,xmax=102,width=.32\linewidth]
						\addplot [colorP5] table [x=k,y=bk] {Data/FGM1_raw.dat};
						\addplot [colorP2, dashed] table [x=k,y=bk]{Data/FGM1_pure.dat};
						\addplot [colorP1, dotted] table [x=k,y=bk]{Data/FGM1_inter.dat};
						\end{axis}
						\end{tikzpicture} \\
						\hspace{-.5cm}
						\begin{tikzpicture}
						\begin{axis}[plotOptions, title={$a_k$}, ymin=-2, ymax=2,xmin=0,xmax=102,width=.32\linewidth]
						\addplot [colorP5] table [x=k,y=apk] {Data/FGM1_raw.dat};
						\addplot [colorP2, dashed] table [x=k,y=apk] {Data/FGM1_pure.dat};
						\addplot [colorP1, dotted] table [x=k,y=apk] {Data/FGM1_inter.dat};
						\end{axis}
						\end{tikzpicture}&
						\begin{tikzpicture}
						\begin{axis}[plotOptions, title={$d_k$}, ymin=-2, ymax=4500,xmin=0,xmax=102,width=.32\linewidth]
						\addplot [colorP5] table [x=k,y=dk] {Data/FGM1_raw.dat};
						\addplot [colorP2, dashed] table [x=k,y=dk] {Data/FGM1_pure.dat};
						\addplot [colorP1, dotted] table [x=k,y=dk] {Data/FGM1_inter.dat};
						\end{axis}
						\end{tikzpicture} &
						\begin{tikzpicture}
						\begin{axis}[plotOptions, title={$\tau_k$}, ymin=-2, ymax=2,xmin=0,xmax=102,width=.32\linewidth]
						\addplot [colorP5] table [x=k,y=tauk, skip coords between index={100}{101}] {Data/FGM1_raw.dat};
						\addplot [colorP2, dashed] table [x=k,y=tauk, skip coords between index={100}{101}] {Data/FGM1_pure.dat};
						\addplot [colorP1, dotted] table [x=k,y=tauk, skip coords between index={100}{101}] {Data/FGM1_inter.dat};
						\end{axis}
						\end{tikzpicture}\\
						\begin{tikzpicture}
						\begin{axis}[plotOptions, title={$\alpha_k$}, ymin=-2, ymax=2,xmin=0,xmax=102,width=.32\linewidth]
						\addplot [colorP5] table [x=k,y=alphak, skip coords between index={100}{101}] {Data/FGM1_raw.dat};
						\addplot [colorP2, dashed] table [x=k,y=alphak, skip coords between index={100}{101}] {Data/FGM1_pure.dat};
						\addplot [colorP1, dotted] table [x=k,y=alphak, skip coords between index={100}{101}] {Data/FGM1_inter.dat};
						\end{axis}
						\end{tikzpicture}&
						\begin{tikzpicture}
						\begin{axis}[plotOptions, title={$\gamma_k$}, ymin=-2, ymax=200,xmin=0,xmax=102,width=.32\linewidth]
						\addplot [colorP5] table [x=k,y=gammak, skip coords between index={100}{101}] {Data/FGM1_raw.dat};
						\addplot [colorP2, dashed] table [x=k,y=gammak, skip coords between index={100}{101}] {Data/FGM1_pure.dat};
						\addplot [colorP1, dotted] table [x=k,y=gammak, skip coords between index={100}{101}] {Data/FGM1_inter.dat};
						\end{axis}
						\end{tikzpicture}&
						\begin{tikzpicture}
						\begin{axis}[plotOptions, title={$\delta_k$}, ymin=0, ymax=3,xmin=0,xmax=102,width=.32\linewidth]
						\addplot [colorP5] table [x=k,y=deltak, skip coords between index={100}{101}] {Data/FGM1_raw.dat};
						\addplot [colorP2, dashed] table [x=k,y=deltak,skip coords between index={100}{101}] {Data/FGM1_pure.dat};
						\addplot [colorP1, dotted] table [x=k,y=deltak,skip coords between index={100}{101}] {Data/FGM1_inter.dat};
						\end{axis}
						\end{tikzpicture}
					\end{tabular}\vspace{-.35cm}
					\caption{Numerical solution to~\eqref{eq:AGM_lyap_design} for $N=100$ and $L=1$ (plain brown, large values for $\delta_{100}$ and $\gamma_{100}$ were capped for readability purposes; they are due to the fact we impose no control on $z_N$ with our initial choice $\phi_{N}^f$), forced $a_k=\tfrac{L}2$ and $Q_k=0$ (dashed red), forced $a_k=\tfrac{L}2$, $Q_k[1,1]=Q_k[1,2]=0$ and $Q_{k}[2,2]=\tfrac{d_k}{2L}$ (dotted blue). \edit{Total time: $\sim35$ sec{.} on single core of Intel Core i$7$ $1.8$GHz CPU.} }\label{fig:design}
				\end{figure}
			\end{center}
			\begin{theorem}\label{thm:fgm_smooth} Let $f\in\FL$. For all values of $b_k,d_k\geq0$, the iterates of algorithm~\eqref{eq:fgm_3seqs_LS} with $\delta_k=1$ and $\gamma_k=\tfrac{d_{k+1}-d_k}{L}$ satisfy
			\begin{equation*}
			\begin{aligned}d_{k+1} (f(x_{k+1})-f_\star)&+\tfrac{b_{k+1}}{2L}\normsq{f'(x_{k+1})}+\tfrac{L}{2}\normsq{z_{k+1}-x_\star}\\ &\leq d_k (f(x_k)-f_\star)+\tfrac{b_{k}}{2L}\normsq{f'(x_k)}+\tfrac{L}{2}\normsq{z_k-x_\star},
			\end{aligned}
			\end{equation*}
				for all $d_{k+1},b_{k+1}\in\R$ satisfying $d_{k+1}^2-2 (d_{k}+1) d_{k+1}+\tfrac{d_{k}^2}{b_{k}+d_{k}}+d_{k}^2\leq0$ (reducing to $d_{k+1}\in [1+d_k-\sqrt{1+d_k},\, 1+d_k+\sqrt{1+d_k}]$ when $b_k=0$) and $b_{k+1}\leq d_{k+1}$. In addition, the iterates produced by algorithm~\eqref{eq:fgm_3seqs} with  $\alpha_k=\tfrac1L$, and $\tau_k=\tfrac{d_{k+1}-d_k}{d_{k+1}}$ satisfy the same inequality.
			\end{theorem}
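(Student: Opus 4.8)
Both claims fall under the ``verify a potential'' template of Proposition~\ref{prop:verify_potential}: it suffices to produce a nonnegative combination of valid inequalities which, after substituting the update rules, rearranges exactly into the claimed one-step inequality, possibly up to a manifestly nonnegative residual (as with the $b_k\normsq{s_{k+1}-s_k}$ term in the proof of Theorem~\ref{thm:PGM_Lyap}). The target here is the simplified version of~\eqref{eq:pot_fgm1} with $Q_k[1,1]=Q_k[1,2]=0$, $Q_k[2,2]=\tfrac{b_k}{2L}$ and $a_k=\tfrac L2$, i.e. $\phi_k^f=d_k(f(x_k)-f_\star)+\tfrac{b_k}{2L}\normsq{f'(x_k)}+\tfrac L2\normsq{z_k-x_\star}$. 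I would work with the four points $x_\star,x_k,y_{k+1},x_{k+1}$ and the $L$-smooth convex interpolation inequalities $f(u)\ge f(v)+\inner{f'(v)}{u-v}+\tfrac1{2L}\normsq{f'(u)-f'(v)}$ between the pairs relating $x_\star$ to $y_{k+1}$, $x_k$ to $y_{k+1}$ (both orientations, one used as plain convexity), and $y_{k+1}$ to $x_{k+1}$; for the line-search version~\eqref{eq:fgm_3seqs_LS} one also adds, with free-sign multipliers, the stationarity identities $\inner{f'(y_{k+1})}{z_k-x_k}=0$ and $\inner{f'(x_{k+1})}{f'(y_{k+1})}=0$. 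Since $\delta_k=1$ the third sequence collapses to $z_{k+1}=z_k-\gamma_k f'(y_{k+1})$, so $\tfrac L2\normsq{z_{k+1}-x_\star}=\tfrac L2\normsq{z_k-x_\star}-L\gamma_k\inner{f'(y_{k+1})}{z_k-x_\star}+\tfrac{L\gamma_k^2}{2}\normsq{f'(y_{k+1})}$, and the first stationarity identity turns $\inner{f'(y_{k+1})}{z_k-x_\star}$ into $\inner{f'(y_{k+1})}{y_{k+1}-x_\star}$, which is precisely what the $(x_\star,y_{k+1})$ inequality supplies.

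\textbf{Getting the weights and the scalar conditions.} The multipliers are the dual variables of the LMI coming from the lossless reformulation of~\eqref{eq:tt} (Remark~\ref{rem:lossless}); numerically they are read off from the solution of~\eqref{eq:AGM_lyap_design} and then fitted in closed form. I expect the weight of the $(x_\star,y_{k+1})$ inequality to equal $L\gamma_k=d_{k+1}-d_k$ (matching the coefficient in the $\normsq{z_{k+1}-x_\star}$ expansion), the weight of the $(y_{k+1},x_{k+1})$ inequality to be $d_{k+1}$, and the weight of the $(x_k,y_{k+1})$ inequality to scale with $d_k$. Once these are fixed, the verification is a mechanical expansion: substitute $x_{k+1}=y_{k+1}-\tfrac1L f'(y_{k+1})$ and $z_{k+1}=z_k-\gamma_k f'(y_{k+1})$, then collect coefficients of $f(\cdot)-f_\star$, of $\normsq{z_k-x_\star}$, of $\normsq{f'(x_k)}$, $\normsq{f'(y_{k+1})}$, $\normsq{f'(x_{k+1})}$ and of the remaining inner products, checking that they reproduce the right-hand minus left-hand side of the claimed inequality. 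The leftover should be a nonnegative quadratic form in $(f'(x_k),f'(y_{k+1}),f'(x_{k+1}))$: nonnegativity of its $\normsq{f'(x_{k+1})}$ coefficient forces $b_{k+1}\le d_{k+1}$ (the $(y_{k+1},x_{k+1})$ inequality delivers $\tfrac{d_{k+1}}{2L}\normsq{f'(x_{k+1})}$ while the potential only keeps $\tfrac{b_{k+1}}{2L}\normsq{f'(x_{k+1})}$), and nonnegativity of the $2\times2$ block coupling $\normsq{f'(x_k)}$ and $\normsq{f'(y_{k+1})}$ (whose cross term arises from the $\normsq{f'(x_k)-f'(y_{k+1})}$ pieces) is a Schur-complement condition whose elimination of the $f'(x_k)$ direction produces the $\tfrac{d_k^2}{b_k+d_k}$ term; together with the coefficient balance on $\normsq{f'(y_{k+1})}$ this is exactly $d_{k+1}^2-2(d_k+1)d_{k+1}+\tfrac{d_k^2}{b_k+d_k}+d_k^2\le0$, which collapses to the stated interval when $b_k=0$ since then $\tfrac{d_k^2}{b_k+d_k}=d_k$.

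\textbf{From line-searches to fixed steps.} For~\eqref{eq:fgm_3seqs} I would keep exactly the same weighted combination but discard the two stationarity identities, arguing as in \appref{sec:paramSelec} that the specific parameter values make the cancelled terms vanish by themselves. First, with $\alpha_k=\tfrac1L$ the $(y_{k+1},x_{k+1})$ interpolation inequality already contracts to $f(y_{k+1})\ge f(x_{k+1})+\tfrac1{2L}\normsq{f'(x_{k+1})}+\tfrac1{2L}\normsq{f'(y_{k+1})}$ because, with $y_{k+1}-x_{k+1}=\tfrac1L f'(y_{k+1})$, the cross term $\tfrac1L\inner{f'(x_{k+1})}{f'(y_{k+1})}$ produced by $\inner{f'(x_{k+1})}{y_{k+1}-x_{k+1}}$ is exactly cancelled by the one in $\tfrac1{2L}\normsq{f'(x_{k+1})-f'(y_{k+1})}$, so $\inner{f'(x_{k+1})}{f'(y_{k+1})}=0$ is never needed. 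Second, writing $y_{k+1}=(1-\tau_k)x_k+\tau_k z_k$ gives $z_k-y_{k+1}=\tfrac{1-\tau_k}{\tau_k}(y_{k+1}-x_k)$, so in the combination the terms $\inner{f'(y_{k+1})}{z_k-y_{k+1}}$ (from the $\normsq{z_{k+1}-x_\star}$ expansion) and $\inner{f'(y_{k+1})}{y_{k+1}-x_k}$ (from the $(x_k,y_{k+1})$ inequality) merge into a single multiple of $\inner{f'(y_{k+1})}{y_{k+1}-x_k}$; the value $\tau_k=\tfrac{d_{k+1}-d_k}{d_{k+1}}$ is precisely the ratio of the two multipliers that zeroes this coefficient, so again no stationarity identity is invoked and the same weighted sum closes with the same scalar conditions. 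The main obstacle I anticipate is purely computational: pinning down the closed-form multipliers and confirming that the residual is a sum of squares (equivalently, that the $2\times2$ block is PSD) over the whole admissible range of $(d_k,b_k)$; the conceptual content—which inequalities, and how the two line-searches are absorbed—follows the pattern already used for Theorem~\ref{thm:GM_Lyap} and \appref{sec:paramSelec}.
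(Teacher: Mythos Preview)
Your approach is essentially the paper's: the same three interpolation inequalities with the same weights ($\lambda_1=d_{k+1}-d_k$ for $x_\star\to y_{k+1}$, $\lambda_2=d_k$ for $x_k\to y_{k+1}$, $\lambda_3=d_{k+1}$ for $y_{k+1}\to x_{k+1}$), and the residual completes the square as $\tfrac{b_k+d_k}{2L}\normsq{f'(x_k)-\tfrac{d_k}{b_k+d_k}f'(y_{k+1})}$ plus $\tfrac{d_{k+1}-b_{k+1}}{2L}\normsq{f'(x_{k+1})}$ plus a pure $\normsq{f'(y_{k+1})}$ term, which is exactly where your Schur-complement reading of $\tfrac{d_k^2}{b_k+d_k}$ and the condition $b_{k+1}\le d_{k+1}$ come from. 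Your fixed-step argument (paragraph~3) also matches the paper's: it combines the four line-search multipliers into the two identities $\inner{f'(y_{k+1})}{y_{k+1}-x_k+\tfrac{d_{k+1}-d_k}{d_{k+1}}(x_k-z_k)}=0$ and $\inner{f'(x_{k+1})}{x_{k+1}-y_{k+1}+\tfrac1L f'(y_{k+1})}=0$, which vanish for $\tau_k=\tfrac{d_{k+1}-d_k}{d_{k+1}}$ and $\alpha_k=\tfrac1L$.

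There is one concrete gap in your line-search treatment. You list only two stationarity identities, $\inner{f'(y_{k+1})}{z_k-x_k}=0$ and $\inner{f'(x_{k+1})}{f'(y_{k+1})}=0$, whereas the paper uses \emph{four}: in addition, $\inner{f'(y_{k+1})}{y_{k+1}-x_k}\le 0$ with weight $d_{k+1}$ and $\inner{f'(x_{k+1})}{x_{k+1}-y_{k+1}}\le 0$ with weight $d_{k+1}$. Your own paragraph-1 claim that ``the first stationarity identity turns $\inner{f'(y_{k+1})}{z_k-x_\star}$ into $\inner{f'(y_{k+1})}{y_{k+1}-x_\star}$'' already needs the extra identity $\inner{f'(y_{k+1})}{y_{k+1}-x_k}=0$, not just $\inner{f'(y_{k+1})}{z_k-x_k}=0$. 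Relatedly, the substitution $x_{k+1}=y_{k+1}-\tfrac1L f'(y_{k+1})$ you perform in paragraph~2 is valid only for the fixed-step method; in~\eqref{eq:fgm_3seqs_LS} the point $x_{k+1}$ is an \emph{unknown} minimizer on the line $y_{k+1}+\sspan\{f'(y_{k+1})\}$, so you must keep $\inner{f'(x_{k+1})}{y_{k+1}-x_{k+1}}$ as a free term and kill it with the additional optimality condition. Both missing identities are of course legitimate (they follow from one-dimensional optimality on the respective search lines), so this is a bookkeeping omission rather than a conceptual one---but without them your weighted sum does not close for~\eqref{eq:fgm_3seqs_LS}.
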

			Before proceeding with the proof, let us note two simple scenarios that are valid for Theorem~\ref{thm:fgm_smooth}:
			\begin{itemize}
				\item the choice $d_0=0$ along with $b_k=0$ and $d_{k+1}=1+d_k+\sqrt{1+d_k}=\bO(k^2)$, reaching acceleration (analytical version numerically matching the red curves on Figure \ref{fig:design}),
				\item the choice $b_0=d_0=0$ along with $b_{k+1}=d_{k+1}=1+d_k + \sqrt{1+\tfrac{3}{2} d_k}=\bO(k^2)$, reaching acceleration (analytical version numerically matching the blue curves on Figure \ref{fig:design}).
			\end{itemize}
			\begin{proof}
				Combine the following inequalities with corresponding weights:
				\begin{itemize}
					\item smoothness and convexity between $x_\star$ and $y_{k+1}$ with weight $\lambda_1=d_{k+1}-d_k$
					\[ f_\star \geq f(y_{k+1})+\inner{f'(y_{k+1})}{x_\star-y_{k+1}}+\tfrac1{2L}\normsq{f'(y_{k+1})}, \]
					\item smoothness and convexity between $x_k$ and $y_{k+1}$ with weight $\lambda_2=d_k$
					\[ f(x_k)\geq f(y_{k+1})+\inner{f'(y_{k+1})}{x_k-y_{k+1}}+\tfrac{1}{2L}\normsq{f'(x_k)-f'(y_{k+1})},\]
					\item smoothness and convexity between $y_{k+1}$ and $x_{k+1}$ with weight $\lambda_3=d_{k+1}$
					\[f(y_{k+1})\geq f(x_{k+1})+\inner{f'(x_{k+1})}{y_{k+1}-x_{k+1}}+\tfrac{1}{2L}\normsq{f'(x_{k+1})-f'(y_{k+1})}, \]
					\item first line-search optimality condition for $y_{k+1}$ with weight $\lambda_4=d_{k+1}$
					\[\inner{f'(y_{k+1})}{y_{k+1}-x_k}\leq 0, \]
					\item second line-search optimality condition for $y_{k+1}$ with weight $\lambda_5=d_{k+1}-d_k$
					\[\inner{f'(y_{k+1})}{x_k-z_k}\leq 0, \]
					\item first line-search optimality condition for $x_{k+1}$ with weight $\lambda_6=d_{k+1}$
					\[\inner{f'(x_{k+1})}{x_{k+1}-y_{k+1}}\leq 0,\]
					\item second line-search optimality condition for $x_{k+1}$ with weight $\lambda_7=\tfrac{d_{k+1}}{L}$
					\[\inner{f'(x_{k+1})}{f'(y_{k+1})}\leq 0.\]
					
				\end{itemize}
				In the case $b_k+d_k>0$, the weighted sum gives:
				\begin{equation*}
				\begin{aligned}
				0\geq& \lambda_1 \left[f(y_{k+1})-f_\star+\inner{f'(y_{k+1})}{x_\star-y_{k+1}}+\tfrac1{2L}\normsq{f'(y_{k+1})}\right] \\ 
				&+ \lambda_2 \left[f(y_{k+1})-f(x_k)+\inner{f'(y_{k+1})}{x_k-y_{k+1}}+\tfrac{1}{2L}\normsq{f'(x_k)-f'(y_{k+1})}\right]\\
				&+ \lambda_3 \left[f(x_{k+1})-f(y_{k+1})+\inner{f'(x_{k+1})}{y_{k+1}-x_{k+1}}+\tfrac{1}{2L}\normsq{f'(x_{k+1})-f'(y_{k+1})}\right]\\
				&+ \lambda_4 \left[\inner{f'(y_{k+1})}{y_{k+1}-x_k}\right]\\
				&+ \lambda_5 \left[\inner{f'(y_{k+1})}{x_k-z_k}\right]\\
				&+ \lambda_6 \left[\inner{f'(x_{k+1})}{x_{k+1}-y_{k+1}}\right]\\
				&+ \lambda_7 \left[\inner{f'(x_{k+1})}{f'(y_{k+1})}\right]\\
				=& d_{k+1} (f(x_{k+1})-f_\star)+\tfrac{b_{k+1}}{2 L} \normsq{f'(x_{k+1})}+\tfrac{L}{2}\normsq{z_{k+1}-x_\star}\\&-d_k (f(x_k)-f_\star)-\tfrac{b_k}{2 L}\normsq{f'(x_k)}-\tfrac{L}{2}  \normsq{z_k-x_\star}\\
				 &+ \tfrac{d_{k+1}-b_{k+1}}{2 L}\normsq{f'(x_{k+1})}  + \tfrac{b_k+d_k }{2 L} \normsq{f'(x_k)-\tfrac{d_k}{b_k+d_k} f'(y_{k+1})}\\
				& +\tfrac{-d_{k+1}^2+2 (d_{k}+1) d_{k+1}-\tfrac{d_{k}^2}{b_{k}+d_{k}}-d_{k}^2}{2 L} \normsq{f'(y_{k+1})},
				\end{aligned}
				\end{equation*}
				which can be reformulated as
				\begin{equation*}
				\begin{aligned}
				d_{k+1} (f(x_{k+1})-f_\star)&+\tfrac{b_{k+1}}{2 L} \normsq{f'(x_{k+1})}+\tfrac{L}{2}\normsq{z_{k+1}-x_\star}\\
				\leq&\, d_k (f(x_k)-f_\star)+\tfrac{b_k}{2 L}\normsq{f'(x_k)}+\tfrac{L}{2}  \normsq{z_k-x_\star}\\
				& - \tfrac{d_{k+1}-b_{k+1}}{2 L}\normsq{f'(x_{k+1})}- \tfrac{b_k+d_k }{2 L} \normsq{f'(x_k)-\tfrac{d_k}{b_k+d_k} f'(y_{k+1})}\\
				& - \tfrac{-d_{k+1}^2+2 (d_{k}+1) d_{k+1}-\tfrac{d_{k}^2}{b_{k}+d_{k}}-d_{k}^2}{2 L} \normsq{f'(y_{k+1})}\\
				\leq&\, d_k (f(x_k)-f_\star)+\tfrac{b_k}{2 L}\normsq{f'(x_k)}+\tfrac{L}{2}  \normsq{z_k-x_\star},
				\end{aligned}
				\end{equation*}
				where the last inequality is valid as soon as $b_k+d_k>0$ (i.e., at least $b_k>0$ or $d_k>0$ hold), $d_{k+1}\geq b_{k+1}$ and $-d_{k+1}^2+2 (d_{k}+1) d_{k+1}-\tfrac{d_{k}^2}{b_{k}+d_{k}}-d_{k}^2\geq0$, that is, when $d_{k+1}$ lies in the interval
				\[ \left[\tfrac{b_{k} d_{k}+b_{k}+d_{k}^2+d_{k}-\sqrt{2 b_{k}^2 d_{k}+b_{k}^2+3 b_{k} d_{k}^2+2 b_{k} d_{k}+d_{k}^3+d_{k}^2}}{b_{k}+d_{k}},\, \tfrac{b_{k} d_{k}+b_{k}+d_{k}^2+d_{k}+\sqrt{2 b_{k}^2 d_{k}+b_{k}^2+3 b_{k} d_{k}^2+2 b_{k} d_{k}+d_{k}^3+d_{k}^2}}{b_{k}+d_{k}}\right]. \]				
				For the case $b_k=0$ and $d_k\geq 0$ the weighted sum can be written as
				\begin{equation*}
				\begin{aligned}
				0\geq& d_{k+1} (f(x_{k+1})-f_\star)+\tfrac{b_{k+1}}{2 L} \normsq{f'(x_{k+1})}+\tfrac{L}{2}\normsq{z_{k+1}-x_\star}-d_k (f(x_k)-f_\star)-\tfrac{L}{2}  \normsq{z_k-x_\star}\\
					&+ \tfrac{d_{k+1}-b_{k+1}}{2 L}\normsq{f'(x_{k+1})}+\tfrac{d_k}{2 L}\normsq{f'(x_k)-f'(y_{k+1})}\\ & + \tfrac{-d_{k}^2+2 d_{k} d_{k+1}-d_{k}-(d_{k+1}-2) d_{k+1}}{2 L} \normsq{f'(y_{k+1})},
				\end{aligned}
				\end{equation*}
				and the same simplifications can be done again, as soon as $d_{k+1}\in[1+d_k-\sqrt{1+d_k},\ 1+d_k+\sqrt{1+d_k} ]$:
				\[ d_{k+1} (f(x_{k+1})-f_\star)+\tfrac{b_{k+1}}{2 L} \normsq{f'(x_{k+1})}+\tfrac{L}{2}\normsq{z_{k+1}-x_\star}\leq d_k (f(x_k)-f_\star)+\tfrac{L}{2}  \normsq{z_k-x_\star}.\]
				Finally, note that the same proofs are valid for all methods of the form~\eqref{eq:fgm_3seqs} satisfying
				\begin{equation*}
				\begin{aligned}
				&\inner{f'(y_{k+1})}{y_{k+1}-x_k+\tfrac{d_{k+1}-d_k}{d_{k+1}}(x_k-z_k)}\leq 0,\\
				&\inner{f'(x_{k+1})}{x_{k+1}-y_{k+1}+\tfrac1L f'(y_{k+1})}\leq 0,				
				\end{aligned}
				\end{equation*}
				which is true in particular when $\alpha_k=\tfrac1L$, and $\tau_k=\tfrac{d_{k+1}-d_k}{d_{k+1}}$.
			\end{proof}

			\paragraph{Yet another accelerated method.} For the sake of illustration, we want to point out again that some of the choices we made for the design of the previous method were quite arbitrary, and that it was actually not the only way of ending up with an accelerated first-order method. For example, one could start from a method involving only two sequences
			\begin{equation}\label{eq:fgm_2seqs}
			\begin{aligned}
			y_{k+1}&= (1-\tau_k) y_k+\tau_k z_k-\alpha_k f'(y_{k}),\\
			z_{k+1}&=(1-\delta_k) y_{k+1}+\delta_k z_k -\gamma_k f'(y_{k})-\gamma_k' f'(y_{k+1}),\\
			\end{aligned}
			\end{equation}
			along with the corresponding version involving a span-search
			\begin{equation}\label{eq:fgm_2seqs_LS}
			\begin{aligned}
			y_{k+1}&= \argm{x} {f(x)\, \st\, x\in y_k+\sspan\{(z_k-y_k),\, f'(y_k)\}} ,\\
			z_{k+1}&=  (1-\delta_k)y_{k+1}+\delta_k z_k-\gamma_k f'(y_k)-\gamma_k' f'(y_{k+1}).
			\end{aligned}
			\end{equation} For the potential we choose
			\begin{equation*}
			\begin{aligned}
			\phi_k^f= &\begin{pmatrix}\edit{y_k}-x_\star\\ f'(\edit{y_k}) \end{pmatrix}^\top \left[Q_k\otimes I_d\right]\begin{pmatrix}\edit{y_k}-x_\star\\ f'(\edit{y_k}) \end{pmatrix}+a_k \normsq{z_k-x_\star} + d_k\,
			(f(\edit{y_k})-f_\star),
			\end{aligned}	
			\end{equation*}
			with $Q_k\in\Sb^2$ and start with the choice $\phi_0^f=\tfrac{L}2\normsq{{\color{red}z_0}-x_\star}$, $\phi_N^f=d_N\, (f({\color{red}y_N})-f_\star)$. 
			Let us denote $\tilde{\V}_k$ the set of pairs $(\phi^f_{k+1},\phi_k^f)$ for which we can verify the inequality $\phi^f_{k+1}\leq \phi_k^f$ holds for algorithm~\eqref{eq:fgm_2seqs_LS}. As before, we can now design at the same time a potential and a sequence of parameters for~\eqref{eq:fgm_2seqs}, for example by solving
			\begin{equation}\label{eq:AGM_lyap_design2}
			\edit{\max_{\{(\delta_k,\gamma_k,\gamma_k')\}_k}}\max_{\phi_1^f,\hdots,\phi_{N-1}^f,d_N} d_N\, \st\, (\phi_0^f,\phi_1^f)\in \tilde{\V}_0,\hdots,(\phi_{N-1}^f,\phi_N^f)\in\tilde{\V}_{N-1}.
			\end{equation}
			As an example, we provide the results obtained by solving~\eqref{eq:AGM_lyap_design2} for $N=100$ on Figure \ref{fig:design2}. As for the previous cases, it presents a few possible simplifications, leading to the following theorem.

			\begin{center}
				\begin{figure}[!ht]
					\begin{tabular}{rrr}
						\begin{tikzpicture}
						\begin{axis}[plotOptions, title={$Q_k[1,1]$}, ymin=-2, ymax=2,xmin=0,xmax=102,width=.32\linewidth]
						\addplot [colorP5] table [x=k,y=ak] {Data/FGM2_raw.dat};
						\addplot [colorP2, dashed] table [x=k,y=ak] {Data/FGM2_pure.dat};
						\addplot [colorP1, dotted] table [x=k,y=ak] {Data/FGM2_inter.dat};
						\end{axis}
						\end{tikzpicture} &\begin{tikzpicture}
						\begin{axis}[plotOptions, title={$Q_k[1,2]$}, ymin=-2, ymax=30,xmin=0,xmax=102,width=.32\linewidth]
						\addplot [colorP5] table [x=k,y=ck] {Data/FGM2_raw.dat};
						\addplot [colorP2, dashed] table [x=k,y=ck] {Data/FGM2_pure.dat};
						\addplot [colorP1, dotted] table [x=k,y=ck] {Data/FGM2_inter.dat};
						\end{axis}
						\end{tikzpicture} &
						\begin{tikzpicture}
						\begin{axis}[plotOptions, title={$Q_k[2,2]$}, ymin=-3000, ymax=2,xmin=0,xmax=102,width=.32\linewidth]
						\addplot [colorP5] table [x=k,y=bk] {Data/FGM2_raw.dat};
						\addplot [colorP2, dashed] table [x=k,y=bk]{Data/FGM2_pure.dat};
						\addplot [colorP1, dotted] table [x=k,y=bk]{Data/FGM2_inter.dat};
						\end{axis}
						\end{tikzpicture} \\
						\hspace{-.5cm}
						\begin{tikzpicture}
						\begin{axis}[plotOptions, title={$a_k$}, ymin=-2, ymax=2,xmin=0,xmax=102,width=.32\linewidth]
						\addplot [colorP5] table [x=k,y=apk] {Data/FGM2_raw.dat};
						\addplot [colorP2, dashed] table [x=k,y=apk] {Data/FGM2_pure.dat};
						\addplot [colorP1, dotted] table [x=k,y=apk] {Data/FGM2_inter.dat};
						\end{axis}
						\end{tikzpicture}&
						\begin{tikzpicture}
						\begin{axis}[plotOptions, title={$d_k$}, ymin=-2, ymax=6000,xmin=0,xmax=102,width=.32\linewidth]
						\addplot [colorP5] table [x=k,y=dk] {Data/FGM2_raw.dat};
						\addplot [colorP2, dashed] table [x=k,y=dk] {Data/FGM2_pure.dat};
						\addplot [colorP1, dotted] table [x=k,y=dk] {Data/FGM2_inter.dat};
						\end{axis}
						\end{tikzpicture} &
						\begin{tikzpicture}
						\begin{axis}[plotOptions, title={$\tau_k$}, ymin=-2, ymax=2,xmin=0,xmax=102,width=.32\linewidth]
						\addplot [colorP5] table [x=k,y=tauk, skip coords between index={100}{101}] {Data/FGM2_raw.dat};
						\addplot [colorP2, dashed] table [x=k,y=tauk, skip coords between index={100}{101}] {Data/FGM2_pure.dat};
						\addplot [colorP1, dotted] table [x=k,y=tauk, skip coords between index={100}{101}] {Data/FGM2_inter.dat};
						\end{axis}
						\end{tikzpicture}\\
						\begin{tikzpicture}
						\begin{axis}[plotOptions, title={$\alpha_k$}, ymin=-2, ymax=2,xmin=0,xmax=102,width=.32\linewidth]
						\addplot [colorP5] table [x=k,y=alphak, skip coords between index={100}{101}] {Data/FGM2_raw.dat};
						\addplot [colorP2, dashed] table [x=k,y=alphak, skip coords between index={100}{101}] {Data/FGM2_pure.dat};
						\addplot [colorP1, dotted] table [x=k,y=alphak, skip coords between index={100}{101}] {Data/FGM2_inter.dat};
						\end{axis}
						\end{tikzpicture}&
						\begin{tikzpicture}
						\begin{axis}[plotOptions, title={$\gamma_k'$}, ymin=-2, ymax=200,xmin=0,xmax=102,width=.32\linewidth]
						\addplot [colorP5] table [x=k,y=gammakp, skip coords between index={100}{101}] {Data/FGM2_raw.dat};
						\addplot [colorP2, dashed] table [x=k,y=gammakp, skip coords between index={100}{101}] {Data/FGM2_pure.dat};
						\addplot [colorP1, dotted] table [x=k,y=gammakp, skip coords between index={100}{101}] {Data/FGM2_inter.dat};
						\end{axis}
						\end{tikzpicture}&
						\begin{tikzpicture}
						\begin{axis}[plotOptions, title={$\delta_k$}, ymin=0, ymax=3,xmin=0,xmax=102,width=.32\linewidth]
						\addplot [colorP5] table [x=k,y=deltak, skip coords between index={100}{101}] {Data/FGM2_raw.dat};
						\addplot [colorP2, dashed] table [x=k,y=deltak,skip coords between index={100}{101}] {Data/FGM2_pure.dat};
						\addplot [colorP1, dotted] table [x=k,y=deltak,skip coords between index={100}{101}] {Data/FGM2_inter.dat};
						\end{axis}
						\end{tikzpicture}
					\end{tabular}\vspace{-.35cm}
					\caption{Numerical solution to~\eqref{eq:AGM_lyap_design2} for $N=100$ and $L=1$ (plain brown, large values for $\delta_{100}$ and $\gamma_{100}$ were capped for readability purposes; they are due to the fact we impose no control on $z_{N}$ with our initial choice $\phi_{N}^f$), forced $a_k=\tfrac{L}2$ and $Q_k=0$ (dashed red), and forced $a_k=\tfrac{L}2$, $Q_k[1,1]=Q_k[1,2]=0$ and $Q_k[2,2]=-\tfrac{d_k}{2L}$ (dotted blue). For convenience, we did not plot $\gamma_k$ which numerically appeared to be negligible compared to other variables (about $10^{-7}$). \edit{Total time: $\sim30$ sec{.} on single core of Intel Core i$7$ $1.8$GHz CPU.}}\label{fig:design2}
				\end{figure}
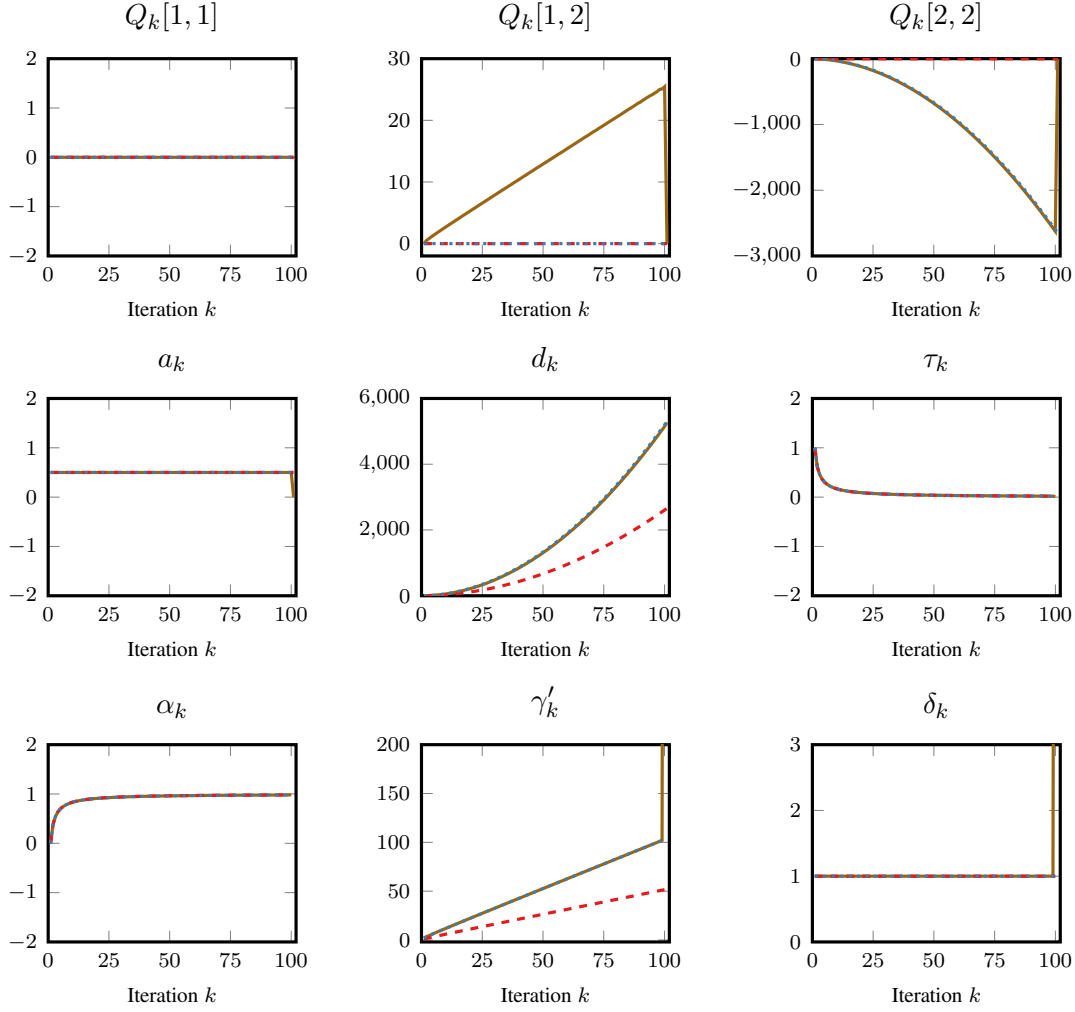
			\end{center}

			The following theorem presents one of the possible outcome of the approach (simple choices are presented just after the theorem).
			\begin{theorem}\label{thm:fgm2_smooth} Let $f\in\FL$. For all values of $d_k\geq b_k \geq0$, the iterates of algorithm~\eqref{eq:fgm_2seqs} with $\delta_k=1$, $\gamma_k=0$, and $\gamma_k'=\tfrac{d_{k+1}-d_k}{L}$ satisfy
			\begin{equation*}
			\begin{aligned}
			d_{k+1} (f(y_{k+1})-f_\star)-\tfrac{b_{k+1}}{2L}\normsq{f'(y_{k+1})}&+\tfrac{L}{2}\normsq{z_{k+1}-x_\star} \\ &\leq d_k (f(y_{k})-f_\star)-\tfrac{b_k}{2L}\normsq{f'(y_{k})}+\tfrac{L}{2}\normsq{z_k-x_\star}
			\end{aligned}
			\end{equation*}
			for all $d_{k+1},b_{k+1}\geq0$ satisfying $b_{k+1}+d_{k+1}-(d_k-d_{k+1})^2\geq 0$. In addition, the iterates of algorithm~\eqref{eq:fgm_2seqs_LS} with $\alpha_k=\tfrac{d_{k}}{d_{k+1} L}$ and $\tau_k=\tfrac{d_{k+1}-d_k}{d_{k+1}}$ satisfy the same inequality.
			\end{theorem}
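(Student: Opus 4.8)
The plan is to mirror the proof of Theorem~\ref{thm:fgm_smooth}: write $\phi_{k+1}^f\le\phi_k^f$ as a sum, with nonnegative weights, of a handful of $L$-smoothness-and-convexity (``interpolation'') inequalities together with the first-order optimality conditions of the span-search, after substituting the update rules. With $\delta_k=1$, $\gamma_k=0$ and $\gamma_k'=\tfrac{d_{k+1}-d_k}{L}$, both~\eqref{eq:fgm_2seqs} and~\eqref{eq:fgm_2seqs_LS} produce $z_{k+1}=z_k-\tfrac{d_{k+1}-d_k}{L}f'(y_{k+1})$, so that $\tfrac L2\normsq{z_{k+1}-x_\star}-\tfrac L2\normsq{z_k-x_\star}=-(d_{k+1}-d_k)\inner{z_k-x_\star}{f'(y_{k+1})}+\tfrac{(d_{k+1}-d_k)^2}{2L}\normsq{f'(y_{k+1})}$; the squared-gradient contribution here is exactly what will force the condition $b_{k+1}+d_{k+1}\ge(d_k-d_{k+1})^2$, while the $x_\star$-part of the inner product cancels against the first inequality below.

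For the span-search version~\eqref{eq:fgm_2seqs_LS} I would combine: (i) smoothness and convexity between $x_\star$ and $y_{k+1}$ with weight $\lambda_1=d_{k+1}-d_k$, i.e.\ $f_\star\ge f(y_{k+1})+\inner{f'(y_{k+1})}{x_\star-y_{k+1}}+\tfrac1{2L}\normsq{f'(y_{k+1})}$ (using $f'(x_\star)=0$); (ii) smoothness and convexity between $y_k$ and $y_{k+1}$ with weight $\lambda_2=d_k$, i.e.\ $f(y_k)\ge f(y_{k+1})+\inner{f'(y_{k+1})}{y_k-y_{k+1}}+\tfrac1{2L}\normsq{f'(y_k)-f'(y_{k+1})}$; and (iii) the two span-search optimality conditions $\inner{f'(y_{k+1})}{z_k-y_k}=0$ and $\inner{f'(y_{k+1})}{f'(y_k)}=0$. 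Then the function values telescope to $d_{k+1}(f(y_{k+1})-f_\star)-d_k(f(y_k)-f_\star)$; the $\inner{f'(y_k)}{f'(y_{k+1})}$ term from expanding $\normsq{f'(y_k)-f'(y_{k+1})}$ is absorbed by the second condition in~(iii); and the remaining linear term $\inner{f'(y_{k+1})}{-d_{k+1}y_{k+1}+d_ky_k+(d_{k+1}-d_k)z_k}$ vanishes, since $y_{k+1}-y_k\in\sspan\{z_k-y_k,f'(y_k)\}$ forces $\inner{f'(y_{k+1})}{y_{k+1}}=\inner{f'(y_{k+1})}{y_k}=\inner{f'(y_{k+1})}{z_k}$. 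What survives is
\[\phi_{k+1}^f-\phi_k^f\;\le\;-\tfrac{d_k-b_k}{2L}\normsq{f'(y_k)}-\tfrac{d_{k+1}+b_{k+1}-(d_{k+1}-d_k)^2}{2L}\normsq{f'(y_{k+1})},\]
which is $\le0$ exactly under $d_k\ge b_k\ge0$ and $b_{k+1}+d_{k+1}\ge(d_k-d_{k+1})^2$ (together with $d_{k+1}\ge d_k$, needed so that $\lambda_1\ge0$, and automatic for the sequences of interest).

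For the explicit scheme~\eqref{eq:fgm_2seqs}, the parameters $\tau_k=\tfrac{d_{k+1}-d_k}{d_{k+1}}$ and $\alpha_k=\tfrac{d_k}{d_{k+1}L}$ give $y_{k+1}=\tfrac{d_k}{d_{k+1}}y_k+\tfrac{d_{k+1}-d_k}{d_{k+1}}z_k-\tfrac{d_k}{d_{k+1}L}f'(y_k)$, equivalently $-d_{k+1}y_{k+1}+d_ky_k+(d_{k+1}-d_k)z_k=\tfrac{d_k}{L}f'(y_k)$. Plugging this identity into the combination of~(i)--(ii) alone (no span-search condition is needed here) turns the leftover linear term into $\tfrac{d_k}{L}\inner{f'(y_{k+1})}{f'(y_k)}$, which cancels exactly the cross term coming from $\normsq{f'(y_k)-f'(y_{k+1})}$; one therefore lands on the very same displayed inequality, which yields the ``In addition'' claim.

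The bulk of the work---and the main obstacle---is the bookkeeping: pinning down $\lambda_1=d_{k+1}-d_k$, $\lambda_2=d_k$ and the multipliers for the two optimality conditions so that, after substituting the $z$- and $y$-updates, every term linear in $f'(y_{k+1})$ and every gradient cross-term cancels, leaving exactly the two negative multiples of $\normsq{f'(y_k)}$ and $\normsq{f'(y_{k+1})}$ whose nonpositivity reproduces the hypotheses $d_k\ge b_k$ and $b_{k+1}+d_{k+1}\ge(d_k-d_{k+1})^2$. A minor auxiliary point is to record that first-order optimality of $y_{k+1}$ over the affine subspace $y_k+\sspan\{z_k-y_k,f'(y_k)\}$ is precisely the pair of orthogonality relations used in~(iii).
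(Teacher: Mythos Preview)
Your proposal is correct and follows essentially the same route as the paper: combine the two interpolation inequalities (between $x_\star,y_{k+1}$ with weight $d_{k+1}-d_k$ and between $y_k,y_{k+1}$ with weight $d_k$), use the span-search orthogonality conditions to kill the linear terms, and observe that the residual is $-\tfrac{d_k-b_k}{2L}\normsq{f'(y_k)}-\tfrac{d_{k+1}+b_{k+1}-(d_{k+1}-d_k)^2}{2L}\normsq{f'(y_{k+1})}$. The only cosmetic difference is that the paper lists three separate weighted optimality ``inequalities'' ($\lambda_3=d_{k+1}$, $\lambda_4=d_{k+1}-d_k$, $\lambda_5=d_k/L$) in its LMI style, whereas you invoke the orthogonality relations as equalities and derive $\inner{f'(y_{k+1})}{y_{k+1}-y_k}=0$ from the other two; for the fixed-step case you substitute the explicit $y$-update where the paper checks the combined inequality $\inner{f'(y_{k+1})}{d_{k+1}(y_{k+1}-y_k)+(d_{k+1}-d_k)(y_k-z_k)+\tfrac{d_k}{L}f'(y_k)}\le0$, but these are the same computation.
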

			Before proceeding with the proof, let us consider those two simpler cases:
			\begin{itemize}
				\item the choice $d_0=0$ along with $b_k=0$ and $d_{k+1}=\tfrac12+d_k+\sqrt{\tfrac14+d_k}=\bO(k^2)$, reaching acceleration (analytical version numerically matching the red curves on Figure \ref{fig:design}).
				\item The choice $b_0=d_0=0$ along with $b_{k+1}=d_{k+1}=1+d_k + \sqrt{1+2 d_k}=\bO(k^2)$, reaching acceleration (analytical version numerically matching the blue curves on Figure \ref{fig:design}), for example for the point $x_k=y_{k}-\tfrac1L f'(y_{k})$ for which we get $f(x_k)-f_\star\leq f(y_{k})-f_\star -\tfrac1{2L}\normsq{f'(y_{k})}\leq \tfrac{L}{2 d_k}\normsq{z_0-x_\star}$.
			\end{itemize}
			\begin{proof}
				Combine the following inequalities with corresponding weights:
				\begin{itemize}
					\item smoothness and convexity between $x_\star$ and $y_{k+1}$ with weight $\lambda_1=d_{k+1}-d_k$:
					\[f_\star\geq f(y_{k+1})+\inner{f'(y_{k+1})}{x_\star-y_{k+1}}+\tfrac{1}{2L}\normsq{f'(y_{k+1})}, \]
					\item smoothness and convexity between $y_k$ and $y_{k+1}$ with weight $\lambda_2=d_k$:
					\[f(y_k)\geq f(y_{k+1})+\inner{f'(y_{k+1})}{y_k-y_{k+1}}+\tfrac{1}{2L}\normsq{f'(y_k)-f'(y_{k+1})}, \]
					\item first line-search optimality condition for $y_{k+1}$ with weight $\lambda_3=d_{k+1}$
					\[\inner{f'(y_{k+1})}{y_{k+1}-y_k}\leq 0, \]
					\item second line-search optimality condition for $y_{k+1}$ with weight $\lambda_4=d_{k+1}-d_k$
					\[\inner{f'(y_{k+1})}{y_{k}-z_k}\leq 0, \]
					\item third line-search optimality condition for $y_{k+1}$ with weight $\lambda_5=\tfrac{d_{k}}{L}$
					\[\inner{f'(y_{k+1})}{f(y_k)}\leq 0.\]
				\end{itemize}
			The weighted sum can be rewritten as
			\begin{equation*}
			\begin{aligned}
			0\geq& \lambda_1 \left[ f(y_{k+1})-f_\star+\inner{f'(y_{k+1})}{x_\star-y_{k+1}}+\tfrac{1}{2L}\normsq{f'(y_{k+1})} \right]\\
			&+ \lambda_2 \left[ f(y_{k+1})-f(y_k)+\inner{f'(y_{k+1})}{y_k-y_{k+1}}+\tfrac{1}{2L}\normsq{f'(y_k)-f'(y_{k+1})} \right]\\
			&+ \lambda_3 \left[ \inner{f'(y_{k+1})}{y_{k+1}-y_k} \right]\\
			&+ \lambda_4 \left[ \inner{f'(y_{k+1})}{y_{k}-z_k} \right]\\
			&+ \lambda_5 \left[ \inner{f'(y_{k+1})}{f(y_k)} \right]\\
			=&d_{k+1} (f(y_{k+1})-f_\star) -\tfrac{b_{k+1}}{2L}\normsq{f'(y_{k+1})}+\tfrac{L}2 \normsq{z_{k+1}-x_\star}\\
			&- d_k (f(y_k)-f_\star) +\tfrac{b_k}{2L}\normsq{f'(y_k)}-\tfrac{L}2\normsq{z_k-x_\star}\\
			&+ \tfrac{ (d_{k}-b_{k})}{2 L} \normsq{f'(y_k)}+ \tfrac{ \left(b_{k+1}-(d_{k}-d_{k+1})^2+d_{k+1}\right)}{2 L} \normsq{f'(y_{k+1})},
			\end{aligned}
			\end{equation*}
			which, in turn, gives:
			\begin{equation*}
			\begin{aligned}
			d_{k+1} &(f(y_{k+1})-f_\star) -\tfrac{b_{k+1}}{2L}\normsq{f'(y_{k+1})}+\tfrac{L}2 \normsq{z_{k+1}-x_\star}\\\leq  &
			 d_k (f(y_k)-f_\star) -\tfrac{b_k}{2L}\normsq{f'(y_k)}+\tfrac{L}2\normsq{z_k-x_\star}\\
			&- \tfrac{ (d_{k}-b_{k})}{2 L} \normsq{f'(y_k)}- \tfrac{ b_{k+1}-(d_{k}-d_{k+1})^2+d_{k+1}}{2 L} \normsq{f'(y_{k+1})}\\ \leq  &
			d_k (f(y_k)-f_\star) -\tfrac{b_k}{2L}\normsq{f'(y_k)}+\tfrac{L}2\normsq{z_k-x_\star},
			\end{aligned}
			\end{equation*}
			where the last inequality is valid as soon as $d_k\geq b_k$, and $b_{k+1}-(d_{k}-d_{k+1})^2+d_{k+1}\geq 0$. The same bound is valid for all methods satisfying
			\[\inner{f'(y_{k+1})}{d_{k+1} (y_{k+1}-y_k)+(d_{k+1}-d_k)(y_k-z_k)+\tfrac{d_k}{L}f'(y_k)}\leq 0, \]
			that include algorithm~\eqref{eq:fgm_2seqs} when $\alpha_k=\tfrac{d_{k}}{d_{k+1} L}$ and $\tau_k=\tfrac{d_{k+1}-d_k}{d_{k+1}}$.
			\end{proof}
			
				\subsection{Linear matrix inequalities for the gradient method}\label{sec:Vk_pgm}
				In this section, we provide the LMI that was used in \secref{sec:GM}, i.e., for vanilla gradient descent. One can extend this LMI to the projected/proximal case using the tools by~\citet{taylor2017exact}, resulting in proofs such as in previous section. The code implementing the LMI formulation below is provided in \secref{sec:ccl}.
				
				 Let us recall that the target is to reformulate~\eqref{eq:tt}. In this case, it corresponds to
				\begin{equation*}\begin{aligned}
				0\geq \max_{f,x_k,x_{k+1},x_\star} \ & \phi^f_{k+1}(x_{k+1})-\phi^f_{k}(x_k)\\
				\text{s.t. } &x_{k+1}=x_k-\tfrac1L f'(x_k),\\
				& f\in\FL \text{ and } f'(x_\star)=0,
				\end{aligned}
				\end{equation*}
				\correc{(note again that maximization over the dimension of the problem is implicit here, by considering $f\in\FL$---see notations in \secref{s:prelim}---equivalently, one could formulate the problem by maximizing over $d\in\mathbb{N}$ and requiring $f\in\FL(\Rd)$)} for when
				\begin{align} \label{eq:pot_GM}
				\phi_k^f=\begin{pmatrix}x_k-x_\star\\ f'(x_k)\end{pmatrix}^\top \left[\begin{pmatrix}
				a_k & c_k \\ c_k & b_k
				\end{pmatrix}\otimes I_d\right]\begin{pmatrix}x_k-x_\star\\ f'(x_k)\end{pmatrix} + d_k\, (f(x_k)-f_\star).
				\end{align}
				They first key step in the reformulation is standard from the performance estimation literature and consists in using a discrete version of the variable $f$, as follows
				\begin{equation}\label{eq:pep_gm}\begin{aligned}
				0\geq \max_{\substack{f_k,f_{k+1},f_\star,\\g_k,g_{k+1},g_\star,\\x_k,x_{k+1},x_\star}} \ & \phi^f_{k+1}(x_{k+1})-\phi^f_{k}(x_k),\\
				\text{s.t. } &x_{k+1}=x_k-\tfrac1L g_k,\\
				& \exists f\in\FL \text{ such that } f(x_i)=f_i \text{ and } f'(x_i)=g_i \text{ for all } i\in\{k,k+1,\star\},\\
				& g_\star=0.
				\end{aligned}
				\end{equation}
				The existence constraint is often referred to as an \emph{interpolation constraint} and can be reformulated using appropriate quadratic inequalities~\citep[Theorem 4]{taylor2017smooth} --- this is also sometimes referred to as \emph{extensions}~\citep{azagra2017extension,daniilidis2018explicit}. For smooth convex functions, this can be reformulated as
				\begin{equation*}\begin{aligned}
				\exists f\in\FL & \text{ such that } f(x_i)=f_i \text{ and } f'(x_i)=g_i \text{ for all } i\in\{k,k+1,\star\} \\&\Leftrightarrow
				f_i\geq f_j+\inner{g_j}{x_i-x_j}+\tfrac1{2L}\normsq{g_i-g_j} \text{ for all } i,j\in\{k,k+1,\star\}.
				\end{aligned} 
				\end{equation*}
				This allows reformulating~\eqref{eq:pep_gm} as a linear matrix inequality. For doing that, let us choose without loss of generality $x_\star=g_\star=0$ and $f_\star=0$ and introduce two matrices $P$ and $F$:
				\begin{equation*}
				P= [ \ x_k \quad g_k \quad g_{k+1}\ ], \quad F=[\ f_k \quad f_{k+1} \ ],
				\end{equation*}
				and the corresponding Gram matrix $G=P^{\top\!}P\succeq 0$, that is,
				\[G=\begin{pmatrix}
				\normsq{x_k} & \inner{g_k}{x_k} & \inner{g_{k+1}}{x_k} \\
				\inner{g_k}{x_k} & \normsq{g_k} & \inner{g_{k}}{g_{k+1}}\\
				\inner{g_{k+1}}{x_k} & \inner{g_k}{g_{k+1}} & \normsq{g_{k+1}}
				\end{pmatrix}.\]
				Let us introduce the following shorthand notations for picking up elements in the Gram matrix $G$ and in $F$: we choose $\bx_k,\bx_{k+1},\bx_\star$, $\bg_{k},\bg_{k+1},\bg_\star \in\R^3$ and $\bfu_k,\bfu_{k+1},\bfu_\star \in\R^2$ such that
				\begin{equation*}
				\begin{aligned}
				x_k=P\bx_k,\quad x_{k+1}=P\bx_{k+1},\quad x_\star=P\bx_\star,\\ g_k=P\bg_k, \quad g_{k+1}=P\bg_{k+1},\quad g_\star=P\bg_\star, \\
				f_k=F\bfu_k, \quad f_{k+1}=F\bfu_{k+1},\quad f_\star=F\bfu_\star.
				\end{aligned}
				\end{equation*}
				In other words, by letting $e_i$ be the unit vector with $1$ as its i$^{\text{th}}$ component we have $\bx_k:=e_1\in\R^3$, $\bg_k:=e_2\in\R^3$, $\bg_{k+1}:=e_3\in\R^3$ along with
				\[\bx_{k+1}:=\bx_k-\tfrac1L \bg_k\in\R^3,\quad \bx_\star=\bg_\star:=0\in\R^3, \]
				and $\bfu_k:=e_1\in\R^2$, $\bfu_{k+1}:=e_2\in\R^2$ and $\bfu_\star:=0\in\R^2$. Those notations allow conveniently writing scalar products by picking up  elements in $G$. For example, $\inner{g_{k+1}}{x_{k+1}-x_\star}$ can be written as
				\[\inner{g_{k+1}}{x_{k+1}-x_\star}=(P\bg_{k+1})^{\top\!} P(\bx_{k+1}-\bx_\star)=\tra\left(G \bg_{k+1}(\bx_{k+1}-\bx_\star)^\top \right). \]
				Also, one can equivalently rewrite the inequality
				\[ f_i- f_j-\inner{g_j}{x_i-x_j}-\tfrac1{2L}\normsq{g_i-g_j} \geq 0,\]
				as
				\[  F (\bfu_i - \bfu_j) +  \tra\left(G\begin{pmatrix}
				\bx_i & \bx_j & \bg_i & \bg_j
				\end{pmatrix} M \begin{pmatrix}
				\bx_i & \bx_j & \bg_i & \bg_j
				\end{pmatrix}^\top\right)\geq 0,\]
				where
				\begin{equation}\label{eq:MM}
				\begin{aligned}
				M := \tfrac1 2\begin{pmatrix}
				0  & 0 & 0  & -1 \\
				0  & 0 & 0  & 1 \\
				0  & 0 & -1/L & 1/L \\
				-1 & 1 & 1/L & -1/L \end{pmatrix}.
				\end{aligned}\end{equation}
				We can also rewrite
				\begin{equation*}
				\begin{aligned}
				\phi_k^f(x_k)&=d_k F(\bfu_k-\bfu_\star)+\tra\left(G \begin{pmatrix}
				\bx_k-\bx_\star & \bg_k 
				\end{pmatrix}\begin{pmatrix}
				a_k & c_k \\ c_k & b_k
				\end{pmatrix}\begin{pmatrix}
				\bx_k^\top-\bx_\star^\top \\ \bg_k^\top 
				\end{pmatrix}\right),\\
				\phi_{k+1}^f(x_{k+1})&=d_{k+1} F(\bfu_{k+1}-\bfu_\star) \\&\quad +\tra\left(G \begin{pmatrix}
				\bx_{k+1}-\bx_\star & \bg_{k+1} 
				\end{pmatrix}\begin{pmatrix}
				a_{k+1} & c_{k+1} \\ c_{k+1} & b_{k+1}
				\end{pmatrix}\begin{pmatrix}
				\bx_{k+1}^\top-\bx_\star^\top \\ \bg_{k+1}^\top 
				\end{pmatrix}\right),
				\end{aligned}
				\end{equation*}
				leading to the following reformulation of~\eqref{eq:pep_gm} given by \correc{(note the absence of a rank constraint, due to the fact maximize over $f$ irrespective of the dimension of the ambient space; this is further discussed in~\citet[Section 3.3]{taylor2017smooth})}
				\begin{equation*}\begin{aligned}
				0\geq \max_{G,\ F} \ & \phi^f_{k+1}(x_{k+1})-\phi^f_{k}(x_k),\\
				\text{s.t. } &G\succeq 0,\\
				&F (\bfu_i - \bfu_j) +  \tra\left(G\begin{pmatrix}
				\bx_i & \bx_j & \bg_i & \bg_j
				\end{pmatrix} M \begin{pmatrix}
				\bx_i & \bx_j & \bg_i & \bg_j
				\end{pmatrix}^\top\right)\geq 0\\ &\quad \quad \text{ for all $i,j\in\{k,k+1,\star \}$},
				\end{aligned}
				\end{equation*}
				whose feasibility (for given parameters $\{(a_k,b_k,c_k,d_k),(a_{k+1},b_{k+1},c_{k+1},d_{k+1})\}$ and $L$) can be verified with standard semidefinite packages~\citep{Article:Sedumi,Article:Mosek}. Let us provide the corresponding linear matrix inequality, which is simply obtained by dualizing the previous formulation. By associating one multiplier for each \emph{interpolation constraint}
				\begin{equation*}
				\begin{aligned}
				F (\bfu_i - \bfu_j) +  \tra\left(G\begin{pmatrix}
				\bx_i & \bx_j & \bg_i & \bg_j
				\end{pmatrix} M \begin{pmatrix}
				\bx_i & \bx_j & \bg_i & \bg_j
				\end{pmatrix}^\top\right)\geq 0 && :\lambda_{i,j},
				\end{aligned}
				\end{equation*}
				one can arrive to the final LMI for gradient descent (we use the notation $\Sb^3$ to denote the set of $3\times 3$ symmetric matrices). \correc{The equivalence with the primal problem can be formally established through strong duality; see for example~\citet[Theorem 6]{taylor2017smooth}.}
				\begin{oframed}
					\noindent The inequality $\phi^f_{k+1}(x_{k+1})\leq \phi^f_{k}(x_k)$ ($\phi_k^f$ defined in~\eqref{eq:pot_GM}) holds for all $x_{k}\in\Rd$, all $f\in\FL(\Rd)$, all $d\in\N$ and $x_{k+1}=x_k-\tfrac1L f'(x_k)$ if and only if there exists $\{\lambda_{i,j}\}_{i,j\in I_k}$, with $I_k:=\{k,k+1,\star\}$, such that 	\begin{equation*}
					\begin{aligned}
					& \lambda_{i,j} \geq 0 \text{ for all } i,j\in I_k,\\
					& d_{k+1} (\bfu_{k+1}-\bfu_\star)-d_k (\bfu_k-\bfu_\star) + \sum_{i,j\in I_k} \lambda_{i,j} (\bfu_{i}-\bfu_j) = 0 \text{ (linear constraint in $\R^2$)},\\
					& V_{k+1} - V_k + \sum_{i,j\in I_k} \lambda_{i,j} M_{i,j} \preceq 0 \text{ (linear matrix inequality in $\Sb^3$)},
					\end{aligned}
					\end{equation*}
					with 
					\begin{equation*}
					\begin{aligned}
					V_{k}&:=\begin{pmatrix}
					\bx_{k}-\bx_\star & \bg_{k} 
					\end{pmatrix}\begin{pmatrix}
					a_{k} & c_{k} \\ c_{k} & b_{k}
					\end{pmatrix}\begin{pmatrix}
					\bx_{k}^\top-\bx_\star^\top \\ \bg_{k}^\top 
					\end{pmatrix}\in\Sb^{3} ,\\
					V_{k+1}&:=\begin{pmatrix}
					\bx_{k+1}-\bx_\star & \bg_{k+1} 
					\end{pmatrix}\begin{pmatrix}
					a_{k+1} & c_{k+1} \\ c_{k+1} & b_{k+1}
					\end{pmatrix}\begin{pmatrix}
					\bx_{k+1}^\top-\bx_\star^\top \\ \bg_{k+1}^\top 
					\end{pmatrix}\in\Sb^{3},\\
					M_{i,j}&:=\begin{pmatrix}
					\bx_i & \bx_j & \bg_i & \bg_j
					\end{pmatrix} M \begin{pmatrix}
					\bx_i & \bx_j & \bg_i & \bg_j
					\end{pmatrix}^\top\in\Sb^{3}.
					\end{aligned}.
					\end{equation*}
				\end{oframed}
			
			\subsection{Linear matrix inequalities for the design procedure}\label{sec:LMI_design}
			
			We provide the LMI formulation of $\tilde{\V}_k$ for algorithm~\eqref{eq:fgm_3seqs}. The code implementing the LMI formulation below is provided in \secref{sec:ccl}.

			 We proceed mostly like in \secref{sec:Vk_pgm} for reformulating~\eqref{eq:tt}.  That is, we now reformulate
			\begin{equation}\label{eq:pep_fgm}\begin{aligned}
			0\geq \max_{\substack{f_{y_k},f_{x_{k}},f_{x_{k+1}},f_\star,\\g_{y_k},g_{x_{k}},g_{x_{k+1}},g_\star,\\z_k,y_k,x_k,x_{k+1},x_\star}} \ & \phi^f_{k+1}(x_{k+1},z_{k+1})-\phi^f_{k}(x_k,z_k),\\
			\text{s.t. } &\inner{g_{y_{k+1}}}{y_{k+1}-x_k}=0, \quad \inner{g_{y_{k+1}}}{z_k-x_k}=0,\\ &\inner{g_{x_{k+1}}}{y_{k+1}-x_{k+1}}=0,\quad \inner{g_{x_{k+1}}}{g_{y_{k+1}}}=0,\\
			&			z_{k+1}=(1-\delta_k) y_{k+1}+\delta_k z_k -\gamma_k f'(y_{k+1}),\\
			& \exists f\in\FL \text{ such that } f(x_i)=f_i \text{ and } f'(x_i)=g_i \text{ for all } (x_i,g_i,f_i)\in S,\\
			& g_\star=0,
			\end{aligned}
			\end{equation}
			where $S$ is the discrete version of $f$: \[S:=\{(x_k,g_{x_k},f_{x_k}),(x_{k+1},g_{x_{k+1}},f_{x_{k+1}}),(y_{k+1},g_{y_{k+1}},f_{y_{k+1}}),(x_\star,g_\star,f_\star)\}.\]
			Note again that~\eqref{eq:pep_fgm} is not an exact reformulation of~\eqref{eq:tt} for the method~\eqref{eq:fgm_3seqs_LS}: it is only a sufficient condition for~\eqref{eq:pot} to be satisfied for the method~\eqref{eq:fgm_3seqs_LS} and all $d\in\N$, $f\in\FL$ and $(x_k,z_k)\in\Rd\times\Rd$.
			
			\paragraph{SDP reformulation.}
			As in \secref{sec:Vk_pgm}, we use a vector $F$ and a Gram matrix $G=P^\top P\succeq 0$ for encoding the problem, with $P$ and $F$ defined as
			\begin{equation*}
			P:= [ \ z_k \quad x_k \quad x_{k+1 } \quad y_{k+1} \quad g_{x_{k}}  \quad g_{x_{k+1}} \quad g_{y_{k+1}}\ ], \quad F:=[\ f_{x_k}   \quad f_{x_{k+1}} \quad f_{y_{k+1}} \ ].
			\end{equation*}
			For denoting all points whose gradient and functions values are needed within the formulation, we re-organize them and denote
			\begin{equation*}
			\begin{aligned}
			&(w_\star,g_\star,f_\star):=(x_\star,g_\star,f_\star),\, &(w_1,g_1,f_1):=(x_k,g_{x_k},f_{x_k}),\\ &(w_2,g_2,f_2):=(x_{k+1},g_{x_{k+1}},f_{x_{k+1}}),\, &(w_3,g_3,f_3):=(y_{k+1},g_{y_{k+1}},f_{y_{k+1}}).
			\end{aligned}
			\end{equation*}			
			Let us denote $\bw_\star$, $\bw_1$, $\bw_2$, $\bw_{3}$, $\bg_\star$, $\bg_1$, $\bg_2$, $\bg_3$, $\bz_k\in\R^7$ and $\bfu_\star,\bfu_1,\bfu_2, \bfu_{3}\in\R^3$ the vectors such that
			\begin{equation*}
			\begin{aligned}
			&x_k=P\bw_1,\quad 
			x_{k+1}=P\bw_2,\quad
			y_{k+1}=P\bw_3,\\
			&g_{x_k}=P\bg_1,\quad
			g_{x_{k+1}}=P\bg_2,\quad
			g_{y_{k+1}}=P\bg_3,\\
			&f_{x_k}=P\bfu_{1},\quad f_{x_{k+1}}=P\bfu_{2}, \quad f_{y_{k+1}}=P\bfu_{3}.
			\end{aligned}
			\end{equation*}
			along with $z_k=P\bz_k$, $\bw_\star=\bg_\star=0$ and $\bfu_\star=0$. More precisely, we define $\bz_k:=e_1\in\R^7$
			\begin{equation*}
			\begin{aligned}
			&\bw_1:=e_2\in\R^7,\quad \bw_2:=e_3\in\R^7,\quad \bw_3:=e_4\in\R^7,\\
			&\bg_1:=e_5\in\R^7,\quad \bg_{2}:=e_6\in\R^7, \quad \bg_{3}:=e_7\in\R^7,\\
			&\bfu_{1}:=e_1\in\R^3, \quad \bfu_{2}:=e_2\in\R^3,\quad \bfu_{3}:=e_3\in\R^3.
			\end{aligned}
			\end{equation*}
			We encode \emph{interpolation constraints} and the corresponding \emph{multiplier} $\lambda_{i,j}$ as before, for all points for which we needed to use the gradient and function values, i.e., for $x_\star$, $x_k$, $x_{k+1}$ and $y_{k+1}$:
			\begin{equation*}
			\begin{aligned}
			F (\bfu_i - \bfu_j) +  \tra\left(G\begin{pmatrix}
			\bw_i & \bw_j & \bg_i & \bg_j
			\end{pmatrix} M \begin{pmatrix}
			\bw_i & \bw_j & \bg_i & \bg_j
			\end{pmatrix}^\top\right)\geq 0 && :\lambda_{i,j},
			\end{aligned}
			\end{equation*}
			where $i,j\in\{\star,1,2,3\}$. Next, the four line-search conditions can be encoded as $\tra (G\ A_i)=0$ (for $i=1,\hdots,4$) with
			\begin{equation*}
			\begin{aligned}
			A_1&=\tfrac12 \left(\bg_{3} (\bw_{3}-\bw_1)^\top+ (\bw_{3}-\bw_1) \bg_{3}^\top \right)\text{, for $\inner{f'(y_{k+1})}{y_{k+1}-x_k}=\tra(A_1G)=0$,}\\
			A_2&=\tfrac12 \left(\bg_{3} (\bz_{k}-\bw_1)^\top+ (\bz_{k}-\bw_1) \bg_{3}^\top \right)\text{, for $\inner{f'(y_{k+1})}{z_{k}-x_k}=\tra(A_2G)=0$,}\\
			A_3&=\tfrac12 \left(\bg_{2}(\bw_{3}-\bw_2)^\top+ (\bw_{3}-\bw_2) \bg_{2}^\top \right)\text{, for $\inner{f'(x_{k+1})}{y_{k+1}-x_{k+1}}=\tra(A_3G)=0$,}\\
			A_4&=\tfrac12 \left(\bg_{2} \bg_{3}^\top + \bg_3 \bg_2^\top \right)\text{, for $\inner{f'(x_{k+1})}{f'(y_{k+1})}=\tra(A_4G)=0$,}\\
			\end{aligned}
			\end{equation*}
			and we denote the corresponding multipliers by $\mu_{1},\hdots,\mu_{4}$. Finally, we define
			\[ \bz_{k+1}:=(1-\delta_k) \bw_{3}+\delta_k\bz_k-\gamma_k \bg_3.\]
			The LMI formulation is, again, a dual to the SDP reformulation of~\eqref{eq:pep_fgm} (see below):
			\begin{equation*}\begin{aligned}
			0\geq \max_{G,\, F} \ & \phi^f_{k+1}(x_{k+1},z_{k+1})-\phi^f_{k}(x_k,z_k),\\
			\text{s.t. } &\tra (G\ A_1)=0,\, \tra (G\ A_2)=0,\,\tra (G\ A_3)=0,\, \tra (G\ A_4)=0,\\
			& F (\bfu_i - \bfu_j) +  \tra\left(G\begin{pmatrix}
			\bw_i & \bw_j & \bg_i & \bg_j
			\end{pmatrix} M \begin{pmatrix}
			\bw_i & \bw_j & \bg_i & \bg_j
			\end{pmatrix}^\top\right)\geq 0 \\ &\quad \quad \text{ for all } i,j\in \{\star,1,2,3\}.
			\end{aligned}
			\end{equation*}
			We use $\Sb^7$ to denote the set of $7\times 7$ symmetric matrices, and use $M$ as defined in~\eqref{eq:MM}. \correc{In this case, a strong duality argument can be obtained using similar lines as~\citet[Lemma 6]{drori2018efficient}.}
			\begin{oframed}
				\noindent Given two triplets $(a_k,d_k,Q_k)$, $(a_{k+1},d_{k+1},Q_{k+1})$ and a pair $(\delta_k,\gamma_k)$, if there exists $\{\lambda_{i,j}\}_{i,j\in I_k}$, with $I_k:=\{\star,1,2,3\}$ and $\{\mu_i\}_{i\in\{1,2,3,4\}}$ such that 	\begin{equation*}
				\begin{aligned}
				& \lambda_{i,j} \geq 0 \text{ for all } i,j\in I_k, \\
				& d_{k+1} (\bfu_{k+1}-\bfu_\star)-d_k (\bfu_k-\bfu_\star) + \sum_{i,j\in I_k} \lambda_{i,j} (\bfu_{i}-\bfu_j) = 0 \text{ (linear constraint in $\R^3$)},\\
				&  V_{k+1}-V_k + \sum_{i,j\in I_k} \lambda_{i,j} M_{i,j}+\sum_{i\in\{1,2,3,4\}} \mu_{i}\ A_i \preceq 0 \text{ (linear matrix inequality in $\Sb^7$)},
				\end{aligned}
				\end{equation*}
				with 
				\begin{equation*}
				\begin{aligned}
				V_{k}&:=a_{k}(\bz_k-\bx_\star) (\bz_k-\bx_\star)^\top + \begin{pmatrix}
				\bw_{1}-\bw_\star & \bg_{1} 
				\end{pmatrix}Q_k\begin{pmatrix}
				\bw_{1}^\top-\bw_\star^\top \\ \bg_{1}^\top 
				\end{pmatrix}\in\Sb^{7} ,\\
				V_{k+1}&:=a_{k+1}(\bz_{k+1}-\bx_\star) (\bz_{k+1}-\bx_\star)^\top +\begin{pmatrix}
				\bw_{2}-\bw_\star & \bg_{2} 
				\end{pmatrix}Q_{k+1}\begin{pmatrix}
				\bw_{2}^\top-\bw_\star^\top \\ \bg_{2}^\top 
				\end{pmatrix}\in\Sb^{7},\\
				M_{i,j}&:=\begin{pmatrix}
				\bw_i & \bw_j & \bg_i & \bg_j
				\end{pmatrix} M \begin{pmatrix}
				\bw_i & \bw_j & \bg_i & \bg_j
				\end{pmatrix}^\top\in\Sb^{7},
				\end{aligned}
				\end{equation*}
				then the inequality $\phi^f_{k+1}(x_{k+1})\leq \phi^f_{k}(x_k)$ ($\phi_k^f$ defined in~\eqref{eq:pot_fgm1}) holds for all $f\in\FL(\Rd)$, all $d\in\N$, all $x_k,z_k\in\Rd$, and $x_{k+1},z_{k+1}$ generated by method~\eqref{eq:fgm_3seqs_LS}. 
			\end{oframed}
			There are several ways of optimizing the parameters $(\delta_k,\gamma_k)$ in the process (an alternative to what we propose below is to use an appropriate Schur complement),  for example
			\begin{equation*}
			\begin{aligned}
			&a_{k+1}(\bz_{k+1}-\bx_\star)(\bz_{k+1}-\bx_\star)^\top=a_k\begin{pmatrix}
			\bw_3 \\ \bz_k-\bw_3 \\ \bg_3
			\end{pmatrix}^\top \begin{pmatrix}
			1 \\ \delta_k \\ -\gamma_k
			\end{pmatrix}\begin{pmatrix}
			1 \\ \delta_k \\ -\gamma_k
			\end{pmatrix}^\top
			\begin{pmatrix}
			\bw_3 \\ \bz_k-\bw_3 \\ \bg_3
			\end{pmatrix},
			\\
			& = \begin{pmatrix}
			\bw_3 \\ \bz_k-\bw_3 \\ \bg_3
			\end{pmatrix}^\top \underbrace{\begin{pmatrix}
			a_k & a_k\delta_k & -a_k\gamma_k \\
			a_k\delta_k & a_k\delta_k^2 & -a_k\delta_k\gamma_k\\
			-a_k\gamma_k & -a_k\delta_k\gamma_k & a_k\gamma_k^2
			\end{pmatrix}}_{S_k:=}\begin{pmatrix}
			\bw_3 \\ \bz_k-\bw_3 \\ \bg_3
			\end{pmatrix}= \begin{pmatrix}
			\bw_3 \\ \bz_k-\bw_3 \\ \bg_3
			\end{pmatrix}^\top S_k \begin{pmatrix}
			\bw_3 \\ \bz_k-\bw_3 \\ \bg_3
			\end{pmatrix},
			\end{aligned}
			\end{equation*}
			where $\mathrm{Rank}\, S_k = 1$, $S_k\succeq 0$ and $S_k[1,1]=a_k$. Now, one can pick $S_k\succeq 0$ as new variable, drop the rank constraint, and keep constraining $S_k[1,1]=a_k$. There is always a feasible solution to the LMI with  $\mathrm{Rank}\, S_k= 1$ if the LMI is feasible ($S_k$ intervenes only on the side $.\preceq 0$), leading to
			\[ V_{k+1}=S_k +\begin{pmatrix}
			\bw_{2}-\bw_\star & \bg_{2} 
			\end{pmatrix}Q_k\begin{pmatrix}
			\bw_{2}^\top-\bw_\star^\top \\ \bg_{2}^\top 
			\end{pmatrix}\]
			in the previous LMI, under the additional constraints $S_k\succeq 0$ and $S_k[1,1]=a_k$. One can then recover $\delta_k=\tfrac{S_k[1,2]}{S_k[1,1]}$ and $\gamma_k=-\tfrac{S_k[1,3]}{S_k[1,1]}$, along with $\tau_k=-\tfrac{\mu_2}{\mu_1}$ and $\alpha_k=\tfrac{\mu_4}{\mu_3}$.
			\clearpage 
			\section{Stochastic gradients under bounded variance}\label{sec:boundedvariances}
			The proofs of this section follow the same ideas as that of \appref{sec:GM_app}: we only reformulate weighted sums of inequalities.
			\subsection{Proof of Theorem~\ref{thm:sgd_BV}}\label{sec:sgd_pot}
			\begin{proof}
				Combine the following inequalities with their corresponding weights.
				\begin{itemize}
					\item Convexity between $x_\star$ and $x_k$ with weight $\lambda_1=\delta_k L$
					\[f_\star \geq f(x_k)+\inner{f'(x_k)}{x_\star-x_k}, \]
					\item averaged smoothness between $x_k$ and $x_{k+1}^{{\color{red}({i_k})}}$ with weight $\lambda_2=d_{k+1}=d_k+\delta_k L$
					\[\E_{i_k} f(x_{k+1}^{({i_k})}) \leq f(x_k) + \E_{i_k}\inner{f'(x_k)}{x_{k+1}^{({i_k})}-x_k}+\tfrac{L}{2}\E_{i_k}\normsq{x_{k+1}^{({i_k})}-x_k},\]
					\item bounded variance at $x_k$ with weight $\lambda_3=e_{k}=\tfrac{\delta_k^2 L}{2}(1+d_{k+1})$
					\[\E_{i_k} \normsq{G(x_k;{i_k})-f'(x_k)}\leq \sigma^2.\]
				\end{itemize}
				The weighted sum can be reformulated as
				\begin{equation*}
				\begin{aligned}
				0\geq & \lambda_1\left[f(x_k)-f_\star+\inner{f'(x_k)}{x_\star-x_k}\right] \\ &\correc{-} \lambda_2 \left[ f(x_k)-\E_{i_k} f(x_{k+1}^{({i_k})})  + \E_{i_k}\inner{f'(x_k)}{x_{k+1}^{({i_k})}-x_k}+\tfrac{L}{2}\E_{i_k}\normsq{x_{k+1}^{({i_k})}-x_k}\right]\\ &+\lambda_3\left[ \E_{i_k} \normsq{G(x_k;{i_k})-f'(x_k)}- \sigma^2\right]\\
				=&d_{k+1}\E_{i_k}[f(x_{k+1}^{({i_k})})-f_\star]+\tfrac{L}{2}\E_{i_k}\normsq{x_{k+1}^{({i_k})}-x_\star} \\
				&-d_{k} (f(x_k)-f_\star)-\tfrac{L}{2}\normsq{x_k-x_\star}-e_{k}\sigma^2+ (\delta_k d_{k+1}-e_{k}) \normsq{f'(x_k)},
				\end{aligned}
				\end{equation*}
				where rearrangment of the last inequality leads to
				\begin{equation*}
				\begin{aligned}
				d_{k+1}&\E_{i_k}[f(x_{k+1}^{({i_k})})-f_\star]+\tfrac{L}{2}\E_{i_k}\normsq{x_{k+1}^{({i_k})}-x_\star}\\&\leq d_{k} (f(x_k)-f_\star)+\tfrac{L}{2}\normsq{x_k-x_\star}+e_{k}\sigma^2- (\delta_k d_{k+1}-e_{k}) \normsq{f'(x_k)}\\
				&\leq d_{k} (f(x_k)-f_\star)+\tfrac{L}{2}\normsq{x_k-x_\star}+e_{k}\sigma^2,
				\end{aligned}
				\end{equation*}
				as soon as $\delta_k d_{k+1}-e_{k}\geq 0$.
			\end{proof}
			\subsection{Proof of Theorem~\ref{thm:avg_BV}}\label{sec:avgsgd_pot}
			\begin{proof}
				Combine the following inequalities with their corresponding weights.
				\begin{itemize}
					\item Smoothness and convexity between $x_k$ and $x_\star$ with weight $\lambda_1=\delta_k L$
					\[f_\star\geq f(x_k)+\inner{f'(x_k)}{x_\star-x_k}+\tfrac1{2L}\normsq{f'(x_k)}, \]
					\item averaged smoothness and convexity between $x_{k}$ and $z_{k+1}^{({i_k})}$ with weight $\lambda_2=\delta_k L$
					\[f(x_k)\geq \E_{i_k} f(z^{({i_k})}_{k+1}) +\E_{i_k}\inner{f'(z_{k+1}^{({i_k})})}{x_k-z_{k+1}^{({i_k})}} +\tfrac1{2L}\E_{i_k}\normsq{f'(x_k)-f'(z_{k+1}^{({i_k})})},\]
					\item averaged smoothness and convexity between $z_{k}$ and $z_{k+1}^{({i_k})}$ with weight $\lambda_3=d_k\delta_k L$
					\[f(z_k)\geq \E_{i_k} f(z^{({i_k})}_{k+1}) +\E_{i_k}\inner{f'(z_{k+1}^{({i_k})})}{z_k-z_{k+1}^{({i_k})}} +\tfrac1{2L}\E_{i_k}\normsq{f'(z_k)-f'(z_{k+1}^{({i_k})})},\]
					\item bounded variance at $x_k$ with weight $\lambda_4=e_{k} =\tfrac{\delta_k^2 L}{2}\tfrac{1+d_k+L\delta_k}{1+d_k}$
					\[\E_{i_k} \normsq{G(x_k;{i_k})-f'(x_k)}\leq \sigma^2.\]
				\end{itemize}
				The weighted sum can be reformulated as
				\begin{equation*}
				\begin{aligned}
				0\geq & \lambda_1 \left[ f(x_k)-f_\star+\inner{f'(x_k)}{x_\star-x_k}+\tfrac1{2L}\normsq{f'(x_k)} \right]\\
				&+\lambda_2 \left[\E_{i_k} f(z^{({i_k})}_{k+1})-f(x_k) +\E_{i_k}\inner{f'(z_{k+1}^{({i_k})})}{x_k-z_{k+1}^{({i_k})}} +\tfrac1{2L}\E_{i_k}\normsq{f'(x_k)-f'(z_{k+1}^{({i_k})})}\right]\\
				&+\lambda_3 \left[\E_{i_k} f(z^{({i_k})}_{k+1})-f(z_k) +\E_{i_k}\inner{f'(z_{k+1}^{({i_k})})}{z_k-z_{k+1}^{({i_k})}} +\tfrac1{2L}\E_{i_k}\normsq{f'(z_k)-f'(z_{k+1}^{({i_k})})}\right]\\
				&+\lambda_4 \left[\E_{i_k} \normsq{G(x_k;{i_k})-f'(x_k)}- \sigma^2\right]\\
				=& \delta_k d_{k+1}L\E_{i_k}[f(z^{({i_k})}_{k+1})-f_\star]+\tfrac{L}{2}\E_{i_k}\normsq{x_{k+1}^{({i_k})}-x_\star}-\delta_k d_{k}L (f(z_k)-f_\star)-\tfrac{L}{2}\normsq{x_k-x_\star}-e_k\sigma^2\\
				&+\tfrac{(d_k+1) \delta_k}{2} \E_{i_k} \normsq{(\tfrac{1}{d_k+1}-1) f'(z_k)-\tfrac{1}{d_k+1}f'(x_k)+\tfrac{\delta_k   L}{d_k+1}G(x_k;{i_k})+f'(z_{k+1}^{({i_k})})}\\
				&+\tfrac{d_k \delta_k }{2 (d_k+1)}\normsq{f'(z_k)+ (\delta_k  L-1) f'(x_k)}\\
				&+\tfrac{ \delta_k (1+\delta_k  L (1-\delta_k  L))}{2} \normsq{f'(x_k)}.
				\end{aligned}
				\end{equation*}
				Again, rearranging the terms leads to:		
				\begin{equation*}
				\begin{aligned}
				\delta_k d_{k+1} L&\E_{i_k}[f(z^{({i_k})}_{k+1})-f_\star]+\tfrac{L}{2}\E_{i_k}\normsq{x_{k+1}^{({i_k})}-x_\star}\\ &\leq \delta_k d_{k}L (f(z_k)-f_\star)+\tfrac{L}{2}\normsq{x_k-x_\star}+e_k\sigma^2\\ &\quad -\tfrac{(d_k+1) \delta_k}{2} \E_{i_k} \normsq{(\tfrac{1}{d_k+1}-1) f'(z_k)-\tfrac{1}{d_k+1}f'(x_k)+\tfrac{\delta_k   L}{d_k+1}G(x_k;{i_k})+f'(z_{k+1}^{({i_k})})}\\
				&\quad -\tfrac{d_k \delta_k }{2 (d_k+1)}\normsq{f'(z_k)+ (\delta_k  L-1) f'(x_k)}-\tfrac{ \delta_k (1+\delta_k  L (1-\delta_k  L))}{2} \normsq{f'(x_k)}\\
				&\leq\delta_k d_{k} L (f(z_k)-f_\star)+\tfrac{L}{2}\normsq{x_k-x_\star}+e_k\sigma^2,
				\end{aligned}\end{equation*}
				where the last inequality follows from $\delta_k  L (1-\delta_k L)+1\geq 0$ (i.e., $\delta_k \leq \tfrac{1+\sqrt{5}}{2L}$).
			\end{proof}
			\subsection{Proof of Theorem~\ref{thm:pavg_BV}}\label{sec:primalavg_pot}
			\begin{proof} Combine the following inequalities with their corresponding weights.
				\begin{itemize}
					\item Convexity and smoothness between $y_{k+1}$ and $x_\star$ with weight $\lambda_1=\delta_k L$
					\[f_\star\geq f(y_{k+1})+\inner{f'(y_{k+1})}{x_\star-y_{k+1}}+\tfrac{1}{2L}\normsq{f'(y_{k+1})}, \]
					\item convexity and smoothness between $y_{k+1}$ and $y_k$ with weight $\lambda_2=d_k$
					\[f(y_k)\geq f(y_{k+1})+\inner{f'(y_{k+1})}{y_k-y_{k+1}}+\tfrac{1}{2L}\normsq{f'(y_{k+1})-f'(y_k)}, \]
					\item bounded variance at $y_{k+1}$ with weight $\lambda_3=e_k=\tfrac{\delta_k^2 L}{2}$
					\[\E_{i_k}\normsq{G(y_{k+1};{i_k})-f'(y_{k+1})}\leq \sigma^2. \]
				\end{itemize}
				The weighted sum can be reformulated as
				\begin{equation*}
				\begin{aligned}
				0\geq & \lambda_1\left[f(y_{k+1})-f_\star+\inner{f'(y_{k+1})}{x_\star-y_{k+1}}+\tfrac{1}{2L}\normsq{f'(y_{k+1})}\right]\\
				&+\lambda_2\left[f(y_{k+1})-f(y_k)+\inner{f'(y_{k+1})}{y_k-y_{k+1}}+\tfrac{1}{2L}\normsq{f'(y_{k+1})-f'(y_k)}\right]\\
				&+\lambda_3\left[\E_{i_k}\normsq{G(y_{k+1};{i_k})-f'(y_{k+1})}- \sigma^2\right]\\
				=& 	(d_{k}+ \delta_k L)[f(y_{k+1})-f_\star]+\tfrac{L}{2}\E_{i_k}\normsq{x_{k+1}^{({i_k})}-x_\star}- d_{k} (f(y_k)-f_\star)-\tfrac{L}{2}\normsq{x_k-x_\star}-e_k\sigma^2\\
				&+\tfrac{\delta_k}{2}(1-\delta_k L)\normsq{f'(y_{k+1})}+\tfrac{d_k}{2L}\normsq{f'(y_{k+1})-f'(y_k))}.
				\end{aligned}
				\end{equation*}
				Rearranging the terms allows obtaining:
				\begin{equation*}
				\begin{aligned}
				(d_{k}+ \delta_k L)&[f(y_{k+1})-f_\star]+\tfrac{L}{2}\E_{i_k}\normsq{x_{k+1}^{({i_k})}-x_\star}\\&\leq d_{k} (f(y_k)-f_\star)+\tfrac{L}{2}\normsq{x_k-x_\star}+e_k\sigma^2\\&\quad-\tfrac{\delta_k}{2}(1-\delta_k L)\normsq{f'(y_{k+1})}-\tfrac{d_k}{2L}\normsq{f'(y_{k+1})-f'(y_k))},
				\\&\leq d_{k} (f(y_k)-f_\star)+\tfrac{L}{2}\normsq{x_k-x_\star}+e_k\sigma^2,
				\end{aligned}
				\end{equation*}
				where the last inequality follows from $\delta_k\leq \tfrac1L$.
			\end{proof}
			
			\subsection{Evaluation of the stochastic gradient at the true averaged iterate}\label{sec:alt_pavging}
			The target algorithm we study here is as follows
			\begin{equation*}
			\begin{aligned}
			y_{k+1}&=\tfrac{k}{k+1}y_k+\tfrac{1}{k+1}x_k,\\
			x_{k+1}^{({i_k})}&=x_{{\color{red}k}}-\delta_k G(y_{k+1};{i_k}).
			\end{aligned}
			\end{equation*}
			\begin{theorem}\label{thm:pavgbis_BV}
				Consider the following iterative scheme
				\begin{equation*}
				\begin{aligned}
				y_{k+1}&=\tfrac{d_k}{d_k+1}y_k+\tfrac{1}{d_k+1}x_k,\\
				x_{k+1}^{({i_k})}&=x_{{\color{red}k}}-\delta_k G(y_{k+1};{i_k}),
				\end{aligned}
				\end{equation*}
				for some $d_k\geq0$ and $0\leq\delta_k\leq \tfrac1L$. Assuming $f\in\FL$ the following inequality holds
				\[d_{k+1}\delta_k L[f(y_{k+1})-f_\star]+\tfrac{L}{2}\E_{i_k}\normsq{x_{k+1}^{({i_k})}-x_\star}\leq d_{k}\delta_k L (f(y_k)-f_\star)+\tfrac{L}{2}\normsq{x_k-x_\star}+e_k\sigma^2,\]
				with $d_{k+1}=d_k+1$ and $e_k=\tfrac{L\delta_k^2}{2}$.
			\end{theorem}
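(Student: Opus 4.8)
The plan is to reuse verbatim the template of all proofs in \appref{sec:boundedvariances} (in particular the proof of Theorem~\ref{thm:pavg_BV} in \appref{sec:primalavg_pot}): write the claimed one-step inequality as a nonnegative linear combination of a few elementary inequalities that hold in this setting---smoothness/convexity of $f$ together with the bounded-variance hypothesis---then substitute the algorithmic updates and check that the weighted sum reproduces the target inequality up to a manifestly nonnegative remainder.

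Concretely, I would combine three inequalities. First, smoothness and convexity of $f$ between $y_{k+1}$ and $x_\star$, with weight $\lambda_1=\delta_k L$:
\[ f_\star\geq f(y_{k+1})+\inner{f'(y_{k+1})}{x_\star-y_{k+1}}+\tfrac{1}{2L}\normsq{f'(y_{k+1})}. \]
Second, smoothness and convexity of $f$ between $y_k$ and $y_{k+1}$, with weight $\lambda_2=d_k\delta_k L$:
\[ f(y_k)\geq f(y_{k+1})+\inner{f'(y_{k+1})}{y_k-y_{k+1}}+\tfrac{1}{2L}\normsq{f'(y_{k+1})-f'(y_k)}. \]
Third, the bounded-variance hypothesis evaluated at $y_{k+1}$, with weight $\lambda_3=e_k=\tfrac{L\delta_k^2}{2}$:
\[ \E_{i_k}\normsq{G(y_{k+1};i_k)-f'(y_{k+1})}\leq\sigma^2. \]
The two algebraic facts to plug in are: (i) from the averaging step, $x_k-y_{k+1}=d_k\,(y_{k+1}-y_k)$, hence $x_k-x_\star=d_k(y_{k+1}-y_k)+(y_{k+1}-x_\star)$; and (ii) expanding $x_{k+1}^{(i_k)}=x_k-\delta_k G(y_{k+1};i_k)$ and using unbiasedness $\E_{i_k}G(y_{k+1};i_k)=f'(y_{k+1})$ together with $\lambda_3$, one gets $\tfrac{L}{2}\E_{i_k}\normsq{x_{k+1}^{(i_k)}-x_\star}\leq \tfrac{L}{2}\normsq{x_k-x_\star}-L\delta_k\inner{f'(y_{k+1})}{x_k-x_\star}+\tfrac{L\delta_k^2}{2}\normsq{f'(y_{k+1})}+e_k\sigma^2$. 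Splitting the cross term via (i) and lower-bounding the two resulting inner products with the $\lambda_1$- and $\lambda_2$-inequalities, the whole weighted sum rearranges (using $d_{k+1}=d_k+1$ to merge the function-value terms) into
\[ d_{k+1}\delta_k L[f(y_{k+1})-f_\star]+\tfrac{L}{2}\E_{i_k}\normsq{x_{k+1}^{(i_k)}-x_\star}\leq d_k\delta_k L(f(y_k)-f_\star)+\tfrac{L}{2}\normsq{x_k-x_\star}+e_k\sigma^2-\tfrac{\delta_k(1-L\delta_k)}{2}\normsq{f'(y_{k+1})}-\tfrac{\delta_k d_k}{2L}\normsq{f'(y_{k+1})-f'(y_k)}. \]
The last two terms are nonnegative precisely when $0\leq\delta_k\leq\tfrac1L$ (and $d_k\geq0$), so dropping them yields the statement.

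The main obstacle is purely bookkeeping: one must choose the three multipliers so that, after substituting the $x$-update and the averaging identity, the various gradient-norm contributions---from the $\tfrac1{2L}$ pieces of the two smoothness inequalities, from the $\tfrac{L\delta_k^2}{2}\normsq{G(y_{k+1};i_k)}$ term, and from the variance slack---regroup exactly into the clean nonnegative residual above rather than leaving indefinite cross terms. Getting the $d_k$ scaling of $\lambda_2$ right is what makes the $f(y_k)-f_\star$ part telescope with the coefficient $d_k\delta_k L$, and the hypothesis $\delta_k\leq 1/L$ is exactly what is needed to discard the surviving $\normsq{f'(y_{k+1})}$ term. As with the other proofs in this section, the resulting weighted-sum identity can be double-checked numerically via the performance-estimation toolbox.
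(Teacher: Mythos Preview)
Your proposal is correct and follows essentially the same weighted-sum approach as the paper's proof in \appref{sec:alt_pavging}, with the same multipliers $\lambda_1=\delta_k L$, $\lambda_2=d_k\delta_k L$, $\lambda_3=\tfrac{L\delta_k^2}{2}$. Two minor remarks: the coefficient of the last residual should be $\tfrac{d_k\delta_k}{2}$ rather than $\tfrac{d_k\delta_k}{2L}$ (this does not affect the argument since the term is nonnegative and discarded), and the paper's updated proof observes that the smoothness part of the second inequality is unnecessary---plain convexity $f(y_k)\geq f(y_{k+1})+\inner{f'(y_{k+1})}{y_k-y_{k+1}}$ already suffices, which simply removes that residual term altogether.
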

			\begin{proof}
				Combine the following inequalities with their corresponding weights.
				\begin{itemize}
					\item Convexity and smoothness between $y_{k+1}$ and $x_\star$ with weight $\lambda_1=\delta_k L$
					\[f_\star\geq f(y_{k+1})+\inner{f'(y_{k+1})}{x_\star-y_{k+1}}+\tfrac{1}{2L}\normsq{f'(y_{k+1})}, \]
					\item convexity between $y_{k+1}$ and $y_k$ with weight $\lambda_2=d_k \delta_k L$ {{\bf\color{red}[Update (improvement): smoothness part was not necessary]}}
					\[f(y_k)\geq f(y_{k+1})+\inner{f'(y_{k+1})}{y_k-y_{k+1}}, \]
					\item bounded variance at $y_{k+1}$ with weight $\lambda_3=e_k=\tfrac{ \delta_k^2 L}{2}$
					\[\E_{i_k}\normsq{G(y_{k+1};{i_k})-f'(y_{k+1})}\leq \sigma^2. \]
				\end{itemize}
				The corresponding weighted sum can be reformulated as {{\bf\color{red}[Update (improvement): removed effect of smoothness through the second inequality]}}
				\begin{equation*}
				\begin{aligned}
				0\geq & \lambda_1\left[f(y_{k+1})-f_\star+\inner{f'(y_{k+1})}{x_\star-y_{k+1}}+\tfrac{1}{2L}\normsq{f'(y_{k+1})}\right]\\
				&+\lambda_2 \left[f(y_{k+1})-f(y_k)+\inner{f'(y_{k+1})}{y_k-y_{k+1}}\right]\\
				&+\lambda_3 \left[\E_{i_k}\normsq{G(y_{k+1};{i_k})-f'(y_{k+1})}- \sigma^2\right]\\
				=&(d_k+1)\delta_k L (f(y_{k+1})-f_\star)+\tfrac{L}2\E_{i_k}\normsq{x_{k+1}^{({i_k})}-x_\star}- d_k\delta_k L(f(y_{k})-f_\star)-\tfrac{L}2\normsq{x_k-x_\star}\\
				&\quad-e_k \sigma^2+\tfrac{\delta_k}{2}(1-\delta_k L)\normsq{f'(y_{k+1})}.
				\end{aligned}
				\end{equation*}
				After rearranging the terms, we reach: {{\bf\color{red}[Update: (typo) there was a unnecessary ``$- d_k\delta_k$'' term in the reformulation, and (improvement) removed effect of smoothness through the second inequality]}}
				\begin{equation*}
				\begin{aligned}
				(d_k+1)\delta_k L &(f(y_{k+1})-f_\star)+\tfrac{L}2\E_{i_k}\normsq{x_{k+1}^{({i_k})}-x_\star}\\
				&\leq  d_k\delta_k L(f(y_{k})-f_\star)+\tfrac{L}2\normsq{x_k-x_\star}+e_k \sigma^2 -\tfrac{\delta_k}{2}(1-\delta_k L)\normsq{f'(y_{k+1})}\\
				&\leq  d_k\delta_k L(f(y_{k})-f_\star)+\tfrac{L}2\normsq{x_k-x_\star}+e_k \sigma^2
				\end{aligned}
				\end{equation*}
				where the last inequality follows from $\delta_k\leq \tfrac1L$.
			\end{proof}
			\subsection{Convergence rates}\label{sec:BV_rates}
			In this section, we use the following two facts (they can easily be recovered by upper and lower bounding the sums with appropriate integrals)
			\begin{enumerate}
				\item $\sum_{t=1}^N t^{-\alpha}=O(N^{1-\alpha})$ for $\alpha \neq 1$,
				\item $\sum_{t=1}^N t^{-1}=O(\log N)$.
			\end{enumerate}
			for obtaining asymptotic rates for the previous methods when $\delta_k=(L(1+k)^\alpha)^{-1}$.
			
			\paragraph{Stochastic gradient descent.} From Theorem~\ref{thm:sgd_BV}, we have
			\begin{equation*}
			\begin{aligned}
			(d_k+\delta_k L)\E_{i_k}[f(x_{k+1}^{({i_k})})-f_\star]+&\tfrac{L}{2}\E_{i_k}\normsq{x_{k+1}^{({i_k})}-x_\star}\\&\leq d_{k} (f(x_k)-f_\star)+\tfrac{L}{2}\normsq{x_k-x_\star}+\tfrac{\delta_k^2 L}{2}(1+d_k+\delta_k L)\sigma^2.
			\end{aligned}
			\end{equation*}
			The choice $d_0=0$ leads to
			\begin{align*}
			\left(\sum_{t=0}^{N-1}L\delta_t\right) \E(f(x_{N})-f_\star)\leq \frac{L}{2}\normsq{x_0-x_\star}+\frac{\sigma^2}{2}\sum_{k=0}^{N-1} \left[{L^2\delta_k^2}\left(1+\sum_{t=0}^k L\delta_t\right)\right].
			\end{align*}
			For the choice $\delta_k=(L(1+k)^\alpha)^{-1}$, the different terms behave as follows:
			\begin{itemize}
				\item $\sum_{k=0}^{N-1}L\delta_k\sim N^{1-\alpha}$ when $\alpha\neq 1$,
				\item $\sum_{k=0}^{N-1} {L^2\delta_k^2} \sim N^{1-2\alpha}$ for $\alpha\neq 1/2$,
				\item $\sum_{k=0}^{N-1} \left[{L^2\delta_k^2}\sum_{t=0}^k L\delta_t\right]\sim \sum_{k=0}^{N-1} k^{1-3\alpha}\sim N^{2-3\alpha}$ when $\alpha\neq 1$ and $\alpha\neq 2/3$,
			\end{itemize}
			Under the same restrictions on $\alpha$, we also get:
			\[ \left(\sum_{k=0}^{N-1}L\delta_k\right)^{-1}\sim N^{ \alpha-1},\quad \frac{\sum_{k=0}^{N-1} {L^2\delta_k^2}}{\sum_{t=0}^{N-1} L\delta_t}\sim N^{-\alpha},\quad \frac{\sum_{k=0}^{N-1} \left[{L^2\delta_k^2}\sum_{t=0}^k L\delta_t\right]}{\sum_{k=0}^{N-1}L\delta_k}\sim N^{1-2\alpha},\]
			and hence
			\begin{align*}
			\E f(x_{N})-f_\star\leq  \frac{L}{2}\normsq{x_0-x_\star}\, \bO(N^{\alpha-1}) +\frac{\sigma^2}{2} \left[\bO\left(N^{-\alpha}\right)+\bO\left(N^{1-2\alpha}\right)\right].
			\end{align*}
			\paragraph{Stochastic gradient descent with averaging.} From Theorem~\ref{thm:avg_BV}, we have
			\begin{equation*}
			\begin{aligned}
			(d_k+1)L\delta_k\E_{i_k}[f(z^{({i_k})}_{k+1})-f_\star]+&\tfrac{L}{2}\E_{i_k}\normsq{x_{k+1}^{({i_k})}-x_\star}\\&\leq d_{k}L\delta_k (f(z_k)-f_\star)+\tfrac{L}{2}\normsq{x_k-x_\star}+\tfrac{\delta_k^2}{2} \tfrac{L(1+d_k+L\delta_k)}{1+d_k}\sigma^2.
			\end{aligned}
			\end{equation*}
			Let us choose $d_0=0$ and define $d_{k+1}:=\tfrac{\delta_k}{\delta_{k+1}}(d_k+1)$, $D_k:=L\delta_{k-1}d_{k-1}+L\delta_{k-1}$ (with $D_0:=0$); one can write
			\[D_{k+1}\E_{i_k}[f(z^{({i_k})}_{k+1})-f_\star]+\tfrac{L}{2}\E_{i_k}\normsq{x_{k+1}^{({i_k})}-x_\star}\leq D_{k} (f(z_k)-f_\star)+\tfrac{L}{2}\normsq{x_k-x_\star}+\tfrac{\delta_k^2}{2} \tfrac{L(D_{k+1}+L^2\delta_k^2)}{D_{k+1}}\sigma^2.\]
			In addition, we get $D_{k+1}=L\delta_{k}d_{k}+L\delta_{k}=L\delta_{k}\tfrac{\delta_{k-1}}{\delta_k}(d_{k-1}+1)+L\delta_{k}=D_k+L\delta_k=\sum_{t=0}^{k}L\delta_t$, and arrive to the final guarantee
			\[D_N\E[f(z_{N})-f_\star]\leq \tfrac{L}{2}\normsq{x_0-x_\star} + \tfrac{\sigma^2}{2L}\sum_{k=1}^{N}\delta_{k-1}^2L^2 \tfrac{D_{k}+L^2\delta_{k-1}^2}{D_{k}},\]
			from which one can arrive to the following rates:
			\[\E f(z_{N})-f_\star\leq \tfrac{L}{2}\normsq{x_0-x_\star} \,\bO(N^{\alpha-1})+ \tfrac{\sigma^2}{2L} [\bO(N^{-\alpha})+\bO(N^{-1-2\alpha})],\]
			where we used the following estimates of the rates for the different terms:
			\begin{itemize}
				\item $D_N=\sum_{k=0}^{N-1} L\delta_k\sim N^{1-\alpha}$ when $\alpha\neq 1$,
				\item $\sum_{k=1}^{N} L^2\delta_{k-1}^2\sim N^{1-2\alpha}$ when $\alpha\neq 1/2$,
				\item $\sum_{k=1}^{N} \frac{L^4\delta_{k-1}^4}{D_k}\sim \sum_{k=1}^{N} {k^{-1-3\alpha}}\sim N^{-3\alpha} $ when $\alpha\neq 1$ and $\alpha\neq 0$,
			\end{itemize}
			and under the same restrictions on $\alpha$: 
			\[ D_N^{-1} \sim N^{\alpha-1},\quad  \frac{\sum_{k=1}^{N} L^2\delta^2_{k-1}}{D_N}\sim N^{-\alpha},\quad \frac{\sum_{k=1}^{N} \frac{L^4\delta^4_{k-1}}{D_k}}{D_N}\sim N^{-1-2\alpha}.\]
			
			\paragraph{Stochastic gradient descent with primal averaging.} From Theorem~\ref{thm:pavg_BV}, we have
			\[(d_{k}+\delta_k L)[f(y_{k+1})-f_\star]+\tfrac{L}{2}\E_{i_k}\normsq{x_{k+1}^{({i_k})}-x_\star}\leq d_{k} (f(y_k)-f_\star)+\tfrac{L}{2}\normsq{x_k-x_\star}+\tfrac{L\delta_k^2}{2}\sigma^2,\]
			hence choosing $d_0=0$ and using the same estimates as before, we get
			\[ \E f(y_{N})-f_\star\leq \tfrac{L}{2}\normsq{x_0-x_\star}\bO(N^{\alpha-1})+\tfrac{\sigma^2}{2L}\bO(N^{-\alpha}).\]
			
			\subsection{Better worst-case guarantees and rates} \label{sec:sgd_fast}
			Different techniques can be used for obtaining stochastic methods with improved worst-case bounds. Among others, one can (i) assume the number of iterations to be fixed in advance, (ii) assume the domain to be compact, or (iii) use more past information --- essentially through use of a \emph{dual averaging} scheme. Algorithms using (iii) do not directly fit within our framework, which assumes (very) limited memory through~\eqref{eq:algo}---we believe it could be adapted, but let it for further investigations. The previous points (i) and (ii) are used by~\citet{lan2012optimal}, point (ii) is used e.g., by~\citet{hu2009accelerated}. Point (iii) is used by~\citet[Section 6]{devolder2011stochastic} and by~\citet[Section 7.1 and Appendix D]{xiao2010dual}.
			
			In this section, we briefly discuss two ways of improving the complexity results using namely techniques (i) and an alternative to (iii) for using more past information by adding a new sequence to~\eqref{eq:algo}. We start by assuming a known number of iterations $N$. In this setting, we get the following results.		
			\paragraph{Fixed number of iterations.} Consider a stochastic gradient scheme with primal averaging and a constant step-size $\delta_k=\delta$. Denoting $R=\norm{x_0-x_\star}$ and choosing $\delta=\frac{1}{\sqrt{N}\frac\sigma{R}+L}$, we get
			\[\E f(y_N)-f_\star\leq \tfrac{R^2}{2 \delta  N}+\tfrac{\delta  \sigma ^2}{2}= \tfrac{L^2 R^3 }{2  L R N+2 N^{3/2} \sigma }+\tfrac{R \sigma }{\sqrt{N}}\leq \tfrac{LR^2}{2N}+\tfrac{R \sigma }{\sqrt{N}}\]
			(note though that the optimal step-size in that setting is $\delta=\frac{R}{\sqrt{N} \sigma }$ (when $\frac{R}{\sqrt{N} \sigma }\leq \tfrac1L$ following the statement of Theorem~\ref{thm:pavg_BV}), leading to $\E f(y_N)-f_\star\leq \frac{R \sigma }{\sqrt{N}}$).
			
			\paragraph{Dual averaging.} Through the use of the dual averaging technique (iii)~\citet[Section 6]{devolder2011stochastic} and~\citet[Section 7.1 and Appendix D]{xiao2010dual}, one can observe that $x_0$ plays an important role as its weight grows in the dual averaging process (through coefficients $\beta_i$'s in both \citep{devolder2011stochastic} and \citep{xiao2010dual}). In a certain sense, one can interpret $x_0$ as a \emph{magnet} or \emph{anchor} whose goal is to stabilize (and slow down) the iterative process. As such, our framework does not allow treating estimate sequences with the damping with $x_0$ (recall that the whole point of potential-based proofs is to forget how the current iterate was obtained). Still, let us have a glance at the answers provided by the framework without dual averaging.
			
			\paragraph{Momentum without dual averaging nor damping.}
			Using the line-search design procedure, we obtain the following methods that appears to be symptomatic of all accelerated methods without damping so far: a huge error accumulation. This phenomenon is not surprising nor new.  Using the previous parameter selection technique, we arrive to the following theorem, which establishes the (negative and non-surprising) upper bound obtained through our framework, the main message being that the accumulation term $\sum_{k=0}^{N-1}e_k=\bO(N^3)$ in both cases presented below (whereas $d_N=\bO(N^2)$) can hardly be avoided while using \emph{pure acceleration} without damping---usually achieved by appropriate dual averaging schemes~\citep{xiao2010dual,devolder2011stochastic}. 	
				\begin{theorem}
					Let $k\in\N$, $d_{k},d_{k+1}\geq 0$, $f\in\FL$ and the algorithms
					\begin{equation*}
					\begin{aligned}
					y_{k+1}&=\argm{x}{f(x) \st x \in x_k+\sspan \{z_k-x_k\}},  \\
					x_{k+1}^{(i_{k})}&=\argm{x}{f(x) \st x \in y_{k+1}+\sspan\{G(y_{k+1};i_{k})\}},\\
					z_{k+1}^{(i_{k})}&=z_k - \tfrac{d_{k+1}-d_k}L G(y_{k+1};i_{k}),\\
					\end{aligned}
					\end{equation*}
					and
					\begin{equation*}
					\begin{aligned}
					y_{k+1}&=x_k+\left(1-\tfrac{d_k}{d_{k+1}}\right) \, (z_k-x_k),  \\
					x_{k+1}^{(i_{k})}&= y_{k+1}- \tfrac{\eta_k}{d_{k+1}}G(y_{k+1};i_{k}),\\
					z_{k+1}^{(i_{k})}&=z_k - \tfrac{d_{k+1}-d_k}L G(y_{k+1};i_{k}).\\
					\end{aligned}
					\end{equation*}
					In both cases, we have
					\[ d_{k+1}\E_{i_k}(f(x_{k+1}^{(i_k)})-f_\star)+\tfrac L 2\E_{i_k}\normsq{z_{k+1}^{(i_k)}-x_\star}\leq d_k (f(x_k)-f_\star)+\tfrac{L}2\normsq{z_k-x_\star}+e_k\sigma^2 \]
					for all choices of $e_k,\eta_k\geq 0$ satisfying both $\tfrac{d_{k+1}-d_k}{2 L}-e_k+\eta_k\geq 0$ and $ 2 d_{k+1} e_k L-d_{k+1} (d_{k+1}-d_k)^2-\eta_k ^2 L^2\geq 0$. In particular, the following three choices are valid:
					\begin{itemize}
						\item $d_k=\tfrac{k(k+1)}{4}$ for all $k\geq 0$ together with $\eta_k =\frac{(k+1) \left(k+2\right)}{4L}$ and $e_k=\tfrac{(k+1) (k+3)}{4 L}$, or
						\item $d_k=\tfrac{k(k+1)}{4}$ for all $k\geq 0$  together with the slightly better $\eta_k=\tfrac{(k+1) \left(k+2-\sqrt{3} \sqrt{k+2}\right)}{4 L}$, $e_k=\tfrac{(k+1) \left(k+3-\sqrt{3} \sqrt{k+2}\right)}{4 L}$, or
						\item (primal averaging) $d_{k+1}=d_k+\delta_k L$ together with $\eta_k=0$, $e_k=\tfrac{L\delta_k^2}{2}$ when $\delta_k\leq \tfrac{1}L$.
					\end{itemize}
				\end{theorem}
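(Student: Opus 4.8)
The plan is to mimic the recipe used throughout the appendix (compare the proofs of Theorem~\ref{thm:sgd_BV}, Theorem~\ref{thm:pavg_BV} and Theorem~\ref{thm:fgm_smooth}): express the target potential inequality as a weighted sum of a handful of elementary inequalities, then recognize the two hypotheses on $(d_k,d_{k+1},e_k,\eta_k)$ as nonnegativity of the residual squared norms. The potential at play is $\phi_k^f=d_k(f(x_k)-f_\star)+\tfrac{L}{2}\normsq{z_k-x_\star}$. Concretely, I would combine, after taking $\E_{i_k}$:
\begin{itemize}
\item smoothness-and-convexity between $x_\star$ and $y_{k+1}$ (using $f'(x_\star)=0$), weight $d_{k+1}-d_k$: $\ f_\star\geq f(y_{k+1})+\inner{f'(y_{k+1})}{x_\star-y_{k+1}}+\tfrac{1}{2L}\normsq{f'(y_{k+1})}$;
\item plain convexity between $x_k$ and $y_{k+1}$, weight $d_k$: $\ f(x_k)\geq f(y_{k+1})+\inner{f'(y_{k+1})}{x_k-y_{k+1}}$;
\item the descent lemma applied at $y_{k+1}$ to the point $u:=y_{k+1}-\tfrac{\eta_k}{d_{k+1}}G(y_{k+1};i_k)$, which upper bounds $f(x_{k+1}^{(i_k)})$ in \emph{both} algorithms (in the momentum version because $x_{k+1}^{(i_k)}=u$; in the line-search version because $x_{k+1}^{(i_k)}$ minimizes $f$ over the whole span $y_{k+1}+\sspan\{G(y_{k+1};i_k)\}$), weight $d_{k+1}$;
\item the bounded-variance inequality $\E_{i_k}\normsq{G(y_{k+1};i_k)-f'(y_{k+1})}\leq\sigma^2$ at $y_{k+1}$, weight $e_k$.
\end{itemize}

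Next I would substitute the updates $z_{k+1}^{(i_k)}=z_k-\tfrac{d_{k+1}-d_k}{L}G(y_{k+1};i_k)$ and $y_{k+1}=\tfrac{d_k}{d_{k+1}}x_k+\tfrac{d_{k+1}-d_k}{d_{k+1}}z_k$ (i.e.\ $\tau_k=\tfrac{d_{k+1}-d_k}{d_{k+1}}$), use unbiasedness of the oracle to turn linear-in-$G$ contributions into linear-in-$f'(y_{k+1})$ ones, and use $\E_{i_k}\normsq{G}=\E_{i_k}\normsq{G-f'(y_{k+1})}+\normsq{f'(y_{k+1})}$. The weighted sum should then rearrange as
\begin{equation*}
\begin{aligned}
0\ \geq\ & \E_{i_k}\phi_{k+1}^f-\phi_k^f-e_k\sigma^2
+\Big(\tfrac{d_{k+1}-d_k}{2L}+\eta_k-e_k\Big)\normsq{f'(y_{k+1})}\\
&+\Big(e_k-\tfrac{(d_{k+1}-d_k)^2}{2L}-\tfrac{L\eta_k^2}{2d_{k+1}}\Big)\E_{i_k}\normsq{G(y_{k+1};i_k)},
\end{aligned}
\end{equation*}
where all the cross terms linear in $f'(y_{k+1})$ collapse to $\inner{f'(y_{k+1})}{d_kx_k+(d_{k+1}-d_k)z_k-d_{k+1}y_{k+1}}=0$ by the choice of $\tau_k$ (for the line-search variant one instead uses the two first-line-search optimality conditions $\inner{f'(y_{k+1})}{z_k-x_k}=0$ and $\inner{f'(y_{k+1})}{y_{k+1}-x_k}=0$ to annihilate the identical term). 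Nonnegativity of the two residual coefficients is exactly the stated pair $\tfrac{d_{k+1}-d_k}{2L}-e_k+\eta_k\geq0$ and $2d_{k+1}e_kL-d_{k+1}(d_{k+1}-d_k)^2-\eta_k^2L^2\geq0$, whence $\E_{i_k}\phi_{k+1}^f\leq\phi_k^f+e_k\sigma^2$.

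The three listed instantiations are then checked elementarily by plugging $d_k=\tfrac{k(k+1)}{4}$ (so $d_{k+1}-d_k=\tfrac{k+1}{2}$), resp.\ $d_{k+1}=d_k+\delta_kL$, into the two inequalities: for the first choice the first inequality is an equality and the second reduces to $\tfrac{3(k+1)^2(k+2)}{16}\geq0$; the second choice is the variant in which the second inequality is saturated (hence the $\sqrt3\sqrt{k+2}$ correction, while the first stays an equality); the third gives $\tfrac{\delta_k}{2}(1-L\delta_k)\geq0$ and $0\geq0$, valid for $\delta_k\leq\tfrac1L$, and then the momentum scheme with $\eta_k=0$ is precisely the primal-averaging method of Theorem~\ref{thm:pavg_BV} up to relabelling $z\leftrightarrow x$. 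The main obstacle is the bookkeeping in the reformulation: one must verify that \emph{no} residual cross term $\inner{f'(y_{k+1})}{G(y_{k+1};i_k)}$ survives (it does not, since the oracle is unbiased) and that the only leftover squared norms are $\normsq{f'(y_{k+1})}$ and $\E_{i_k}\normsq{G(y_{k+1};i_k)}$, with the constants $\tfrac{L\eta_k^2}{2d_{k+1}}$ (from $d_{k+1}\cdot\tfrac{L}{2}\normsq{x_{k+1}^{(i_k)}-y_{k+1}}$) and $\tfrac{(d_{k+1}-d_k)^2}{2L}$ (from the $z$-update) coming out exactly so that the two residual conditions match the theorem verbatim; for the line-search algorithm one must additionally be careful with the signs of the multipliers attached to the (equality) first-line-search optimality conditions.
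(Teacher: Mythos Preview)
Your argument is correct and follows the paper's template, but it is genuinely simpler than the paper's own proof in one respect. The paper handles the step from $y_{k+1}$ to $x_{k+1}^{(i_k)}$ via the full interpolation inequality
\[
f(y_{k+1})\geq \E_{i_k} f(x_{k+1}^{(i_k)})+\E_{i_k}\inner{f'(x_{k+1}^{(i_k)})}{y_{k+1}-x_{k+1}^{(i_k)}}+\tfrac1{2L}\E_{i_k}\normsq{f'(y_{k+1})-f'(x_{k+1}^{(i_k)})},
\]
together with the two line-search optimality conditions $\E_{i_k}\inner{f'(x_{k+1}^{(i_k)})}{y_{k+1}-x_{k+1}^{(i_k)}}\geq0$ and $\E_{i_k}\inner{f'(x_{k+1}^{(i_k)})}{G(y_{k+1};i_k)}\leq0$ (multipliers $d_{k+1}$ and $\eta_k$). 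This introduces $f'(x_{k+1}^{(i_k)})$ and produces a \emph{third} residual squared norm $\tfrac{d_{k+1}}{2L}\E_{i_k}\normsq{\tfrac{\eta_k L}{d_{k+1}}G+f'(x_{k+1}^{(i_k)})-f'(y_{k+1})}$, which is trivially nonnegative. Your use of the descent lemma at the surrogate point $u=y_{k+1}-\tfrac{\eta_k}{d_{k+1}}G(y_{k+1};i_k)$, combined with $f(x_{k+1}^{(i_k)})\leq f(u)$, bypasses $f'(x_{k+1}^{(i_k)})$ entirely and lands directly on the two residual coefficients stated in the theorem. The trade-off: the paper's route is the mechanical output of the LMI/dual-multiplier machinery (the $\lambda_7=\eta_k$ multiplier is what reveals the fixed step $\eta_k/d_{k+1}$ from the line-search analysis), whereas your route requires guessing the surrogate $u$ up front but is shorter and avoids one superfluous residual. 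Both give the identical pair of conditions and the same verification of the three instantiations.
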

				\begin{proof}
					Perform a weighted sum of the following inequalities:
					\begin{itemize}
						\item smoothness and convexity between $x_\star$ and $y_{k+1}$ with weight $\lambda_1=d_{k+1}-d_k$
						\[f_\star\geq f(y_{k+1})+\inner{f'(y_{k+1})}{x_\star-y_{k+1}}+\tfrac1{2L}\normsq{f'(y_{k+1})}, \]
						\item averaged smoothness and convexity between $y_{k+1}$ and $x_{k+1}^{(i_k)}$ with weight $\lambda_2=d_{k+1}$
						\[f(y_{k+1}) \geq \E_{i_k}f(x_{k+1}^{(i_k)})+\E_{i_k}\inner{f'(x_{k+1}^{(i_k)})}{y_{k+1}-x_{k+1}^{(i_k)}}+\tfrac1{2L}\E_{i_k}\normsq{f'(y_{k+1})-f'(x_{k+1}^{(i_k)})}, \]
						\item convexity between $x_k$ and $y_{k+1}$ with weight $\lambda_3=d_k$
						\[f(x_k)\geq f(y_{k+1})+\inner{f'(y_{k+1})}{x_k-y_{k+1}}, \]
						\item first line-search optimality condition of $y_{k+1}$ with weight $\lambda_4=d_{k+1}$
						\[\inner{f'(y_{k+1})}{x_k-y_{k+1}}\geq 0, \]
						\item second line-search optimality condition of $y_{k+1}$ with weight $\lambda_5=d_{k+1}-d_k$
						\[\inner{f'(y_{k+1})}{z_k-x_k}\geq 0,\]
						\item first (averaged) line-search optimality condition for $x_{k+1}^{(i_k)}$ with weight $\lambda_6=d_{k+1}$
						\[\E_{i_k}\inner{f'(x_{k+1}^{(i_k)})}{y_{k+1}-x_{k+1}^{(i_k)}}\geq 0, \]
						\item second (averaged) line-search optimality condition for $x_{k+1}^{(i_k)}$ with weight $\lambda_7=\eta_k$:
						\[\E_{i_k}\inner{f'(x_{k+1}^{(i_k)})}{G(y_{k+1};i_k)}\leq 0, \]
						\item bounded variance at $y_{k+1}$ with weight $\lambda_8=e_k$
						\[\E_{i_k}\normsq{G(y_{k+1};i_k)-f'(y_{k+1})}\leq \sigma^2. \]
					\end{itemize}
					Now, the weighted sum corresponds to
					\begin{equation*}
					\begin{aligned}
					d_{k+1}&\E_{i_k}(f(x_{k+1}^{(i_k)})-f_\star)+\tfrac L 2\E_{i_k}\normsq{z_{k+1}^{(i_k)}-x_\star}\\\leq& d_k (f(x_k)-f_\star)+\tfrac{L}2\normsq{z_k-x_\star}+e_k\sigma^2 \\
					&-\tfrac{d_{k+1}}{2 L} \E_{i_k}\normsq{\tfrac{\eta_k L}{d_{k+1}}G(y_{k+1};i_k) +f'(x_{k+1}^{(i_k)})-f'(y_{k+1})}\\
					&- \left(\tfrac{d_{k+1}-d_k}{2 L}-e_k+\eta_k \right)\normsq{f'(y_{k+1})}\\
					&- \tfrac{ 2 d_{k+1} e_k L-d_{k+1} (d_{k+1}-d_k)^2-\eta_k ^2 L^2}{2 d_{k+1} L}\E_{i_k}\normsq{G(y_{k+1};i_k)},
					\end{aligned}
					\end{equation*}
					therefore, this weighted sum corresponds to the statement of the theorem when the three conditions are met:
					\begin{equation*}
					\begin{aligned}
					\tfrac{d_{k+1}}{2 L}&\geq0,\\
					\left(\tfrac{d_{k+1}-d_k}{2 L}-e_k+\eta_k \right)&\geq 0, \\
					\tfrac{ 2 d_{k+1} e_k L-d_{k+1} (d_{k+1}-d_k)^2-\eta_k ^2 L^2}{2 d_{k+1} L}&\geq 0,
					\end{aligned}
					\end{equation*}
					which are easy to check in the case of primal averaging. Another possible choice is to set $e_k=\tfrac{d_{k+1}-d_k}{2 L}+\eta_k$ and $\eta_k$ such that $\tfrac{ 2 d_{k+1} e_k L-d_{k+1} (d_{k+1}-d_k)^2-\eta_k ^2 L^2}{2 d_{k+1} L}\geq 0$; in particular, when $d_k=\tfrac{k(k+1)}{4}$ for all $k\geq 0$ we get
					\[ \eta_k \in \left[\tfrac{(k+1) \left(k+2-\sqrt{3} \sqrt{k+2}\right)}{4 L},\tfrac{(k+1) \left(k+2+\sqrt{3} \sqrt{k+2}\right)}{4 L}\right],\]
					a possible choice is $\eta_k =\frac{(k+1) \left(k+2\right)}{4 L}$, resulting in a critical accumulation in the variance term 
					\begin{equation*}
					\begin{aligned}
					d_{k+1}&\E_{i_k}(f(x_{k+1}^{(i_k)})-f_\star)+\tfrac L 2\E_{i_k}\normsq{z_{k+1}^{(i_k)}-x_\star}\\\leq& d_k (f(x_k)-f_\star)+\tfrac{L}2\normsq{z_k-x_\star}+\tfrac{(k+1) (k+3)}{4 L}\sigma^2 \\
					&-\tfrac{(k+1) (k+2)}{8 L} \E_{i_k}\normsq{f'(x_{k+1}^{(i_k)})-f'(y_{k+1})+G(y_{k+1};i_k)}\\
					&-\tfrac{3(k+1)}{8 L}\E_{i_k}\normsq{G(y_{k+1};i_k)}\\ \leq& d_k (f(x_k)-f_\star)+\tfrac{L}2\normsq{z_k-x_\star}+\tfrac{(k+1) (k+3)}{4 L}\sigma^2,\\
					\end{aligned}
					\end{equation*}
					where the accumulation arrives to $\tfrac{\sigma^2}{4L}\sum_{k=0}^{N-1}(k+1)(k+3)=\bO(N^3)$. It is also possible to choose the better (smaller) $\eta_k=\tfrac{(k+1) \left(k+2-\sqrt{3} \sqrt{k+2}\right)}{4 L}$ leading to
					\begin{equation*}
					\begin{aligned}
					d_{k+1}&\E_{i_k}(f(x_{k+1}^{(i_k)})-f_\star)+\tfrac L 2\E_{i_k}\normsq{z_{k+1}^{(i_k)}-x_\star}\\\leq& d_k (f(x_k)-f_\star)+\tfrac{L}2\normsq{z_k-x_\star}+\tfrac{(k+1) \left(k+3-\sqrt{3} \sqrt{k+2}\right)}{4 L}\sigma^2 \\
					&-\tfrac{(k+1) (k+2)}{8 L} \E_{i_k}\normsq{f'(x_{k+1}^{(i_k)})-f'(y_{k+1})+\left(1-\tfrac{\sqrt{3}}{\sqrt{k+2}}\right)G(y_{k+1};i_k)}\\\leq& d_k (f(x_k)-f_\star)+\tfrac{L}2\normsq{z_k-x_\star}+\tfrac{(k+1) \left(k+3-\sqrt{3} \sqrt{k+2}\right)}{4 L}\sigma^2,
					\end{aligned}
					\end{equation*}
					but the result remains philosophically the same. The proof is valid for all methods proposed above as they all satisfy both conditions
					\begin{equation*}
					\begin{aligned}
						&\inner{f'(y_{k+1})}{d_{k+1}(x_k-y_k)+(d_{k+1}-d_k)(z_k-x_k)}\geq 0,\\ &\E_{i_k}\inner{f'(x_{k+1}^{(i_k)})}{d_{k+1}(y_{k+1}-x_{k+1}^{(i_k)})-\eta_k G(y_{k+1};i_k)}\geq0.
					\end{aligned}
					\end{equation*}
				\end{proof}
			\subsection{Linear matrix inequalities}\label{sec:SDP_formulation_bv}
			In this section, we provide the linear matrix inequalities for the basic stochastic gradient method $x_{k+1}^{(i_k)}=x_k-\delta_kG(x_k;i_k)$ with the following family of potentials
			\begin{equation}\label{eq:pot_sgd}
			\begin{aligned}
			\phi_k^f=\begin{pmatrix}x_k-x_\star\\ f'(x_k)\end{pmatrix}^\top \left[Q_k\otimes I_d\right]\begin{pmatrix}x_k-x_\star\\ f'(x_k)\end{pmatrix} + d_k\, (f(x_k)-f_\star).
			\end{aligned}
			\end{equation}
			with $Q_k\in\Sb^2$. The following lines can serve for reproducing Figure~\ref{fig:GM}; the code implementing the following lines is provided in \secref{sec:ccl}. 
			
			We use the following matrices $P$ and $F$ for encoding (stochastic) gradients and function values:
			\begin{equation*}
			\begin{aligned}
			&P= [ \ x_k \ | \ G(x_k;1) \ \hdots \ G(x_k;n) \ | \ f'(x_{k+1}^{(1)}) \ \hdots \ f'(x_{k+1}^{(n)})\ ]\\
			&F= [ \ f(x_k) \ | \ f(x_{k+1}^{(1)}) \ \hdots \ f(x_{k+1}^{(n)}) \ ].
			\end{aligned}
			\end{equation*}
			Define the following vectors for selecting entries in $P$ and $F$: $\bx_\star$, $\bx_0,\ \bx_{1},\ \hdots,\ \bx_{n}$, $\bG_{1},\ \hdots,\ \bG_{n}$, $\bg_\star,\ \bg_{0},\ \bg_{1},\ \hdots, \ \bg_{n}\in\R^{1+2n}$ and $\bfu_\star,\ \bfu_0$, $\bfu_{1},\ \hdots, \ \bfu_{n}\in\R^{1+n}$ such that
			\begin{equation*}
			\begin{aligned}
			 x_k=P\bx_0,\ f'(x_k)=P\bg_0,\ f(x_k)=F\bfu_0,\ G(x_k;i)=P\bG_{i}, \\
			 x_{k+1}^{(i)}=P\bx_i,\ f'(x_{k+1}^{(i)})=P\bg_{i},\ f(x_{k+1}^{(i)})=P\bfu_i,
			\end{aligned}
			\end{equation*}			
			for $i\in\{1,\hdots,n\}$ and $x_\star=P\bx_\star,\ g_\star=P\bg_\star,\ f_\star=F\bfu_\star$. More precisely we choose the following vectors in $\R^{1+2n}$
			\begin{equation*}
			\begin{aligned}
			&\bx_0:=e_1, \quad \bG_i :=e_{1+i} \quad (\text{for }i\in\{1,\hdots,n\}),\quad \bg_{0}:=\tfrac1n\sum_{i=1}^n\bG_i,\\
			&\bx_i:=\bx_0-\delta_k \bG_i \quad (\text{for }i\in\{1,\hdots,n\}),\quad \bg_i:=e_{1+n+i}\quad (\text{for }i\in\{1,\hdots,n\}),\\
			&\bx_\star := 0, \quad \bg_\star:=0,
			\end{aligned}
			\end{equation*}
			and the following vectors in $\R^{n+1}$: $
			\bfu_0:=e_1$, $\bfu_i:=e_{1+i}$ (for $i\in\{1,\hdots,n\}$), $\bfu_\star=0$.
			We also encode variance using a matrix $A_{\text{var}}$ such that
			\[ \tra (A_{\text{var}} P^\top P)=\E_i\normsq{G(x_k;i)-f'(x_k)},\]
			in others words $A_{\text{var}}:=\tfrac1n \sum_{i=1}^n (\bG_i-\bg_0)(\bG_i-\bg_0)^\top$. We use $\Sb^{1+2n}$ to denote the set of $(1+2n)\times (1+2n)$ symmetric matrices, and use $M$ as defined in~\eqref{eq:MM}. \correc{Strong duality, for obtaining the equivalence between the SDP and the following LMI formulation, might be obtained from a standard Slater argument (see e.g.,~\citet[Theorem 6]{taylor2017smooth})	and are not provided here.}
		\begin{oframed}
			\noindent Given two doublets $(d_k,Q_k)$, $(d_{k+1},Q_{k+1})$ and a step-size $\delta_k$, the inequality $\E_{i_k}\phi^f_{k+1}(x_{k+1}^{(i_k)})\leq \phi^f_{k}(x_k)+e_k\sigma^2$ ($\phi_k^f$ defined in~\eqref{eq:pot_sgd}) holds for all $f\in\FL(\Rd)$, all $d\in\N$, all $x_k\in\Rd$  and all sets $\{G(x_k;i)\}_{i=1,\hdots,n}\subset \R^d$ satisfying $\E_i G(x_k;i)=f'(x_k)$ and $\E_i\normsq{G(x_k;i)-f'(x_k)}\leq \sigma^2$ used to generate $x_{k+1}^{(i_k)}$ with $x_{k+1}^{(i_k)}=x_k-\delta_kG(x_k;i_k)$  if and only if there exists $\{\lambda_{i,j}\}_{i,j\in I}$, with $I:=\{\star,0,1,\hdots,n\}$ and $e_k\in\R$ such that
			\begin{equation*}
			\begin{aligned}
			& \lambda_{i,j} \geq 0 \text{ for all } i,j\in I \text{ and } e_k\geq 0,\\
			& d_{k+1} \tfrac1n \sum_{i=1}^n(\bfu_{i}-\bfu_\star)-d_k (\bfu_0-\bfu_\star) + \sum_{i,j\in I} \lambda_{i,j} (\bfu_{i}-\bfu_j) = 0 \text{ (linear constraint in $\R^{n+1}$)},\\
			&  \tfrac1n \sum_{i=1}^n V_{k+1}^{(i)}-V_k -e_k A_{\text{var}} + \sum_{i,j\in I} \lambda_{i,j} M_{i,j} \preceq 0 \text{ (linear matrix inequality in $\Sb^{1+2n}$)},
			\end{aligned}
			\end{equation*}
			with 
			\begin{equation*}
			\begin{aligned}
			V_{k}&:= \begin{pmatrix}
			\bx_{0}-\bx_\star & \bg_{0} 
			\end{pmatrix}Q_k\begin{pmatrix}
			\bx_{0}^\top-\bx_\star^\top \\ \bg_{0}^\top 
			\end{pmatrix}\in\Sb^{1+2n} ,\\
			V_{k+1}^{(i)}&:=\begin{pmatrix}
			\bx_{i}-\bx_\star & \bg_{i} 
			\end{pmatrix}Q_{k+1}\begin{pmatrix}
			\bx_{i}^\top-\bx_\star^\top \\ \bg_{i}^\top 
			\end{pmatrix}\in\Sb^{1+2n},\\
			M_{i,j}&:=\begin{pmatrix}
			\bx_i & \bx_j & \bg_i & \bg_j
			\end{pmatrix} M \begin{pmatrix}
			\bx_i & \bx_j & \bg_i & \bg_j
			\end{pmatrix}^\top\in\Sb^{1+2n}.
			\end{aligned}
			\end{equation*}
		\end{oframed}
			
			\paragraph{Parameter selection.} For adapting the strategy to our parameter selection technique, we refer to \appref{sec:LMI_design}, as the exact same tricks apply, though a bit more tedious in the stochastic setting.

			\clearpage 
			
			\section{Stochastic gradients for over-parameterized models}\label{sec:overparameterized}
			This section contains the proof of Theorem~\ref{thm:overparameterized}, a discussion on how we ended up with primal averaging using the parameter selection technique, and an LMI formulation for studying primal averaging.
			\subsection{Proof of Theorem~\ref{thm:overparameterized}}\label{sec:paving_pot_overp}
			\begin{proof} Let us start with the case $\delta_k\leq \tfrac1L$ and $d_{k+1}=d_k+\delta_k L$. As before, the proof consists in linearly combining the following inequalities.
				\begin{itemize}
					\item Averaged smoothness and convexity between $y_{k+1}$ and $x_\star$ with weight $\lambda_1=\delta_k L$:
					\[f_\star \geq  f(y_{k+1})+ \inner{f'(y_{k+1})}{x_\star-y_{k+1}}+\tfrac{1}{2L}\E_{i_k}\normsq{f'_{{i_k}}(y_{k+1})},\]
					\item convexity between ${\color{red}y}_{k+1}$ and ${\color{red}y_k}$ with weight $\lambda_2=d_k$:
					\[f(y_k) \geq f(y_{k+1}) + \inner{f'(y_{k+1})}{y_k-y_{k+1}}. \]
				\end{itemize}
				The weighted sum of those inequalities can be rewritten as
				\begin{equation*}
				\begin{aligned}
				0\geq & \lambda_1 \left[f(y_{k+1})-f_\star+ \inner{f'(y_{k+1})}{x_\star-y_{k+1}}+\tfrac{1}{2L}\E_{i_k}\normsq{f'_{{i_k}}(y_{k+1})}\right]\\
				&+\lambda_2\left[ f(y_{k+1}) -f(y_k)+ \inner{f'(y_{k+1})}{y_k-y_{k+1}}\right]\\
				=&(d_k+\delta_k L)(f(y_{k+1})-f_\star)+\tfrac{L}{2}\E_{i_k}\normsq{x_{k+1}^{({i_k})}-x_\star}- d_k (f(y_k)-f_\star)-\tfrac{L}{2}\normsq{x_k-x_\star}\\
				&\quad +\tfrac{\delta_k  }{2}\left( 1-\delta_k L\right)\E_{i_k}\normsq{f_{i_k}'(y_{k+1})},
				\end{aligned}
				\end{equation*}
				which can be rearranged to
				\begin{equation*}
				\begin{aligned}
				(d_k+\delta_k L)(f(y_{k+1})-f_\star)&+\tfrac{L}{2}\E_{i_k}\normsq{x_{k+1}^{({i_k})}-x_\star}\\ &\leq d_k (f(y_k)-f_\star)+\tfrac{L}{2}\normsq{x_k-x_\star}-\tfrac{\delta_k  }{2}\left( 1-\delta_k L\right)\E_{i_k}\normsq{f_{i_k}'(y_{k+1})}\\
				&\leq d_k (f(y_k)-f_\star)+\tfrac{L}{2}\normsq{x_k-x_\star},
				\end{aligned}
				\end{equation*}
				where the last inequality follows from $\delta_k\leq \tfrac1L$. In the case where $\delta_k\geq\tfrac1L$, we use instead 
				$d_{k+1}=d_k+2 \delta_k L - \delta_k^2 L^2$ and compensate the nonpositive term in the righthand side by using an additional inequality:
				\begin{itemize}
					\item averaged smoothness between $y_{k+1}$ and $x_\star$ with weight $\lambda_3=\delta_k L(\delta_k L-1)$ (which is nonnegative \correc{as soon as $\delta_k\geq \tfrac1L$})
					\[f(y_{k+1})\geq f_\star +\tfrac1{2L}\E_{i_k}\normsq{f_{i_k}'(y_{k+1})}.\]
				\end{itemize}
				Adding this inequality to the previous sum, we get:
				\begin{equation*}
				\begin{aligned}
				0\geq & \lambda_1 \left[f(y_{k+1})-f_\star+ \inner{f'(y_{k+1})}{x_\star-y_{k+1}}+\tfrac{1}{2L}\E_{i_k}\normsq{f'_{{i_k}}(y_{k+1})}\right]\\
				&+\lambda_2\left[ f(y_{k+1}) -f(y_k)+ \inner{f'(y_{k+1})}{y_k-y_{k+1}}\right]\\
				&+\lambda_3\left[f_\star -f(y_{k+1})+\tfrac1{2L}\E_{i_k}\normsq{f_{i_k}'(y_{k+1})}\right]\\
				=&(d_k+2 \delta_k L-\delta_k^2L^2)(f(y_{k+1})-f_\star)+\tfrac{L}{2}\E_{i_k}\normsq{x_{k+1}^{({i_k})}-x_\star}- d_k (f(y_k)-f_\star)-\tfrac{L}{2}\normsq{x_k-x_\star},
				\end{aligned}
				\end{equation*}
				which can be reformulated as
				\[(d_k+2 \delta_k L-\delta_k^2L^2)(f(y_{k+1})-f_\star)+\tfrac{L}{2}\E_{i_k}\normsq{x_{k+1}^{({i_k})}-x_\star}\leq d_k (f(y_k)-f_\star)+\tfrac{L}{2}\normsq{x_k-x_\star}.\]
			\end{proof}
			
			\subsection{Parameter selection} \label{sec:overp_param_selec}
			
			In this section, we quickly discuss how we ended up with primal averaging for overparametrized models (see Theorem~\ref{thm:overparameterized}). Essentially, we used the first parameter selection technique from \appref{sec:paramSelec}, which we adapted to
			\begin{equation*}
			\begin{aligned}
			y_{k+1}&= (1-\tau_k) x_k+\tau_k z_k,\\
			x_{k+1}^{(i_k)}&=y_{k+1}-\alpha_k f_{i_k}'(y_{k+1}),\\
			z_{k+1}^{(i_k)}&=(1-\delta_k) y_{k+1}+\delta_k z_k -\gamma_k f_{i_k}'(y_{k+1}),\\
			\end{aligned}
			\end{equation*}
			for which we wish to optimize the parameters $\{(\tau_k,\alpha_k,\delta_k,\gamma_k)\}_k$. Instead, we consider the line-search version
			\begin{equation*}
			\begin{aligned}
			y_{k+1}&=\argm{x}{f(x)\, \st\, x\in x_k+\sspan\{z_k-x_k\}},\\
			x_{k+1}&=\argm{x}{f(x)\, \st\, x\in y_{k+1}+\sspan \{f_{i_k}'(y_{k+1})\}},\\
			z_{k+1}&=(1-\delta_k) y_{k+1}+\delta_k z_k -\gamma_k f_{i_k}'(y_{k+1}),\\
			\end{aligned}
			\end{equation*}
			along with the following family of potentials 
			\begin{equation*}
			\begin{aligned}
			\phi_k^f= &q_{1,k}\, \normsq{x_k-x_\star}+q_{2,k}\, \normsq{f'(x_k)}+q_{3,k}\, \E_i \normsq{f_i'(x_k)}+q_{4,k}\,\inner{f'(x_k)}{x_k-x_\star}\\&+ d_k\, (f(x_k)-f_\star)+a_k \normsq{z_k-x_\star},
			\end{aligned}
			\end{equation*}
			which was chosen based on expected symmetries with respect to $i_k$. By picking $\phi_0^f=\tfrac{L}2 \normsq{x_0-x_\star}$ and $\phi_N^f=d_N \,(f(x_N)-f_\star)$, we naturally arrived to primal averaging by solving
			\begin{equation}\label{eq:pot_design_overp}
			\max_{\{(\delta_k,\gamma_k)\}_k}\quad \max_{\phi_1^f,\hdots,\phi_{N-1}^f,d_N} d_N \st (\phi_0^f,\phi_1^f)\in\tilde{\V}_0,\hdots,(\phi_{N-1}^f,\phi_N^f)\in\tilde{\V}_{N-1},
			\end{equation}
			as done in~\eqref{eq:FGM_lyap_ELS}; the numerical results are shown on Figure~\ref{fig:design_overparam}.
			
			\begin{center}
				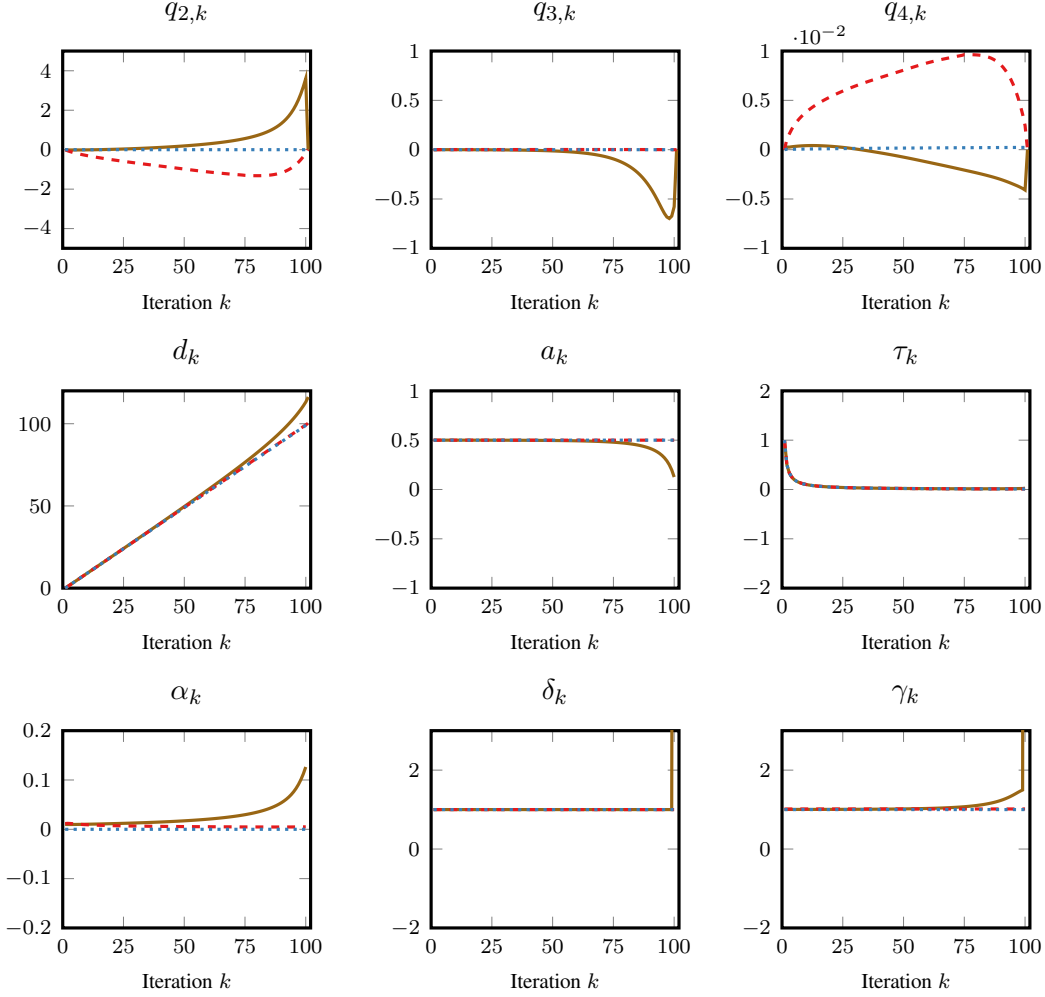
\begin{figure}[!ht]
					\begin{tabular}{rrr}
						\begin{tikzpicture}
						\begin{axis}[plotOptions, title={$q_{2,k}$}, ymin=-5, ymax=5,xmin=0,xmax=102,width=.32\linewidth]
						\addplot [colorP5] table [x=k,y=Q2] {Data/overparam_design_raw.dat};
						\addplot [colorP2, dashed] table [x=k,y=Q2] {Data/overparam_design_pure.dat};
						\addplot [colorP1, dotted] table [x=k,y=Q2] {Data/overparam_design_pavging.dat};
						\end{axis}
						\end{tikzpicture} &
						\begin{tikzpicture}
						\begin{axis}[plotOptions, title={$q_{3,k}$}, ymin=-1, ymax=1,xmin=0,xmax=102,width=.32\linewidth]
						\addplot [colorP5] table [x=k,y=Q3] {Data/overparam_design_raw.dat};
						\addplot [colorP2, dashed] table [x=k,y=Q3]{Data/overparam_design_pure.dat};
						\addplot [colorP1, dotted] table [x=k,y=Q3]{Data/overparam_design_pavging.dat};
						\end{axis}
						\end{tikzpicture} &
						\hspace{-.5cm}
						\begin{tikzpicture}
						\begin{axis}[plotOptions, title={$q_{4,k}$}, ymin=-.01, ymax=.01,xmin=0,xmax=102,width=.32\linewidth]
						\addplot [colorP5] table [x=k,y=Q4] {Data/overparam_design_raw.dat};
						\addplot [colorP2, dashed] table [x=k,y=Q4] {Data/overparam_design_pure.dat};
						\addplot [colorP1, dotted] table [x=k,y=Q4] {Data/overparam_design_pavging.dat};
						\end{axis}
						\end{tikzpicture}\\
						\begin{tikzpicture}
						\begin{axis}[plotOptions, title={$d_{k}$}, ymin=0, ymax=120,xmin=0,xmax=102,width=.32\linewidth]
						\addplot [colorP5] table [x=k,y=dk] {Data/overparam_design_raw.dat};
						\addplot [colorP2, dashed] table [x=k,y=dk] {Data/overparam_design_pure.dat};
						\addplot [colorP1, dotted] table [x=k,y=dk] {Data/overparam_design_pavging.dat};
						\end{axis}
						\end{tikzpicture} &
						\begin{tikzpicture}
						\begin{axis}[plotOptions, title={$a_k$}, ymin=-1, ymax=1,xmin=0,xmax=102,width=.32\linewidth]
						\addplot [colorP5] table [x=k,y=apk, skip coords between index={100}{101}] {Data/overparam_design_raw.dat};
						\addplot [colorP2, dashed] table [x=k,y=apk, skip coords between index={100}{101}] {Data/overparam_design_pure.dat};
						\addplot [colorP1,dotted] table [x=k,y=apk, skip coords between index={100}{101}] {Data/overparam_design_pavging.dat};
						\end{axis}
						\end{tikzpicture}&
						\begin{tikzpicture}
						\begin{axis}[plotOptions, title={$\tau_k$}, ymin=-2, ymax=2,xmin=0,xmax=102,width=.32\linewidth]
						\addplot [colorP5] table [x=k,y=tauk, skip coords between index={100}{101}] {Data/overparam_design_raw.dat};
						\addplot [colorP2, dashed] table [x=k,y=tauk, skip coords between index={100}{101}] {Data/overparam_design_pure.dat};
						\addplot [colorP1, dotted] table [x=k,y=tauk, skip coords between index={100}{101}] {Data/overparam_design_pavging.dat};
						\end{axis}
						\end{tikzpicture}\\
						\begin{tikzpicture}
						\begin{axis}[plotOptions, title={$\alpha_k$}, ymin=-.2, ymax=.2,xmin=0,xmax=102,width=.32\linewidth]
						\addplot [colorP5] table [x=k,y=alphak, skip coords between index={100}{101}] {Data/overparam_design_raw.dat};
						\addplot [colorP2, dashed] table [x=k,y=alphak, skip coords between index={100}{101}] {Data/overparam_design_pure.dat};
						\addplot [colorP1, dotted] table [x=k,y=alphak, skip coords between index={100}{101}] {Data/overparam_design_pavging.dat};
						\end{axis}
						\end{tikzpicture}&
						\begin{tikzpicture}
						\begin{axis}[plotOptions, title={$\delta_k$}, ymin=-2, ymax=3,xmin=0,xmax=102,width=.32\linewidth]
						\addplot [colorP5] table [x=k,y=deltak, skip coords between index={100}{101}] {Data/overparam_design_raw.dat};
						\addplot [colorP2, dashed] table [x=k,y=deltak, skip coords between index={100}{101}] {Data/overparam_design_pure.dat};
						\addplot [colorP1, dotted] table [x=k,y=deltak, skip coords between index={100}{101}] {Data/overparam_design_pavging.dat};
						\end{axis}
						\end{tikzpicture}&
						\begin{tikzpicture}
						\begin{axis}[plotOptions, title={$\gamma_k$}, ymin=-2, ymax=3,xmin=0,xmax=102,width=.32\linewidth]
						\addplot [colorP5] table [x=k,y=gammak, skip coords between index={100}{101}] {Data/overparam_design_raw.dat};
						\addplot [colorP2, dashed] table [x=k,y=gammak,skip coords between index={100}{101}] {Data/overparam_design_pure.dat};
						\addplot [colorP1, dotted] table [x=k,y=gammak,skip coords between index={100}{101}] {Data/overparam_design_pavging.dat};
						\end{axis}
						\end{tikzpicture}
					\end{tabular}\vspace{-.35cm}
					\caption{Numerical solution to~\eqref{eq:pot_design_overp} for $N=100$, $n=2$ and $L=1$ (plain brown, large values for $\delta_{100}$ and $\gamma_{100}$ were capped for readability purposes; they are due to the fact we impose no control on $z_N$ with our initial choice $\phi_{N}^f$), forced $a_k=\tfrac{L}2$ (dashed red), forced  $a_k=\tfrac{L}2$ and $\alpha_k=0$ (dotted blue). For convenience we do not show the values for $q_{1,k}$; they numerically appeared to be negligible for all values of $k$ (about $10^{-7}$) in all three scenarios. \edit{Total computational time: $\sim90$ sec{.} on single core of Intel Core i$7$ $1.8$GHz CPU.}}\label{fig:design_overparam}
				\end{figure}
			\end{center}			
			
			\subsection{Linear matrix inequalities for over-parametrized models} \label{sec:overp_SDP}
			The derivations of the linear matrix inequalities follow from the exact same line as in the previous sections. As an example, we provide the LMI formulation for analyzing primal averaging:
			\begin{equation}\label{eq:pavging_app}
			\begin{aligned}
			y_{k+1}&=\tau_k y_k+(1-\tau_k)x_k,\\
			x_{k+1}^{({i_k})}&=x_k-\gamma_k f_{{i_k}}'(y_{k+1}),
			\end{aligned}
			\end{equation}	
			along with the following family of potentials
			\begin{equation}\label{eq:pavging_pot}
			\begin{aligned}
			\phi_k^f=\begin{pmatrix}x_k-x_\star\\y_{k}-x_\star\\ f_1'(y_k)-f_1'(x_\star)\\\vdots\\ f_n'(y_k)-f_n'(x_\star)\end{pmatrix}^\top \left[Q_k\otimes I_d\right]\begin{pmatrix}x_k-x_\star\\y_k-x_\star\\ f_1'(y_k)-f_1'(x_\star)\\\vdots\\ f_n'(y_k)-f_n'(x_\star)\end{pmatrix}+d_k\, (f(y_k)-f_\star),
			\end{aligned}
			\end{equation}
			which can be simplified using appropriate symmetry arguments, but we simply formulate the problem without using them for tutorial purposes, and simply choose $Q_k\in\Sb^{2+n}$.
			One could additionally use function values at $x_{k+1}^{(i_k)}$ in the formulation, however, this did not improve the results in our experiments while increasing quite a bit the size of the Gram matrix by adding $n^2$ rows and columns to it (corresponding to all gradients \[f_1'(x_{k+1}^{(1)}),\hdots,f_n'(x_{k+1}^{(1)}),\hdots,f_1'(x_{k+1}^{(n)}),\hdots,f_n'(x_{k+1}^{(n)})).\]
			In order to reformulate~\eqref{eq:tt} we therefore only need to encode function values $F$ and gradient/coordinate in a matrix $P$, which we use for formulating the Gram matrix $G=P^\top P\succeq 0$:
			\begin{equation*}
			\begin{aligned}
			&P=[\ x_k \ | \ y_k \ | \ f_1'(y_k) \ \hdots\ f_n'(y_{k}) \ | \ f_1'(y_{k+1}) \ \hdots \ f_n'(y_{k+1})\, ], \\
			&F=[\ f_1(y_k) \ \hdots \ f_n(y_k) \ | \ f_1(y_{k+1}) \ \hdots \ f_n(y_{k+1}) \ ].
			\end{aligned}
			\end{equation*} 
			We also denote by $\by_\star,\ \bx_k,\ \by_k,\ \bg_{i,\star},\ \bg_{i,k},\ \bg_{i,k+1}\in\R^{2+2n}$ (for all $i\in\{1,\hdots,n\}$) such that
			\[ x_\star=P\by_\star,\, x_k=P\bx_k,\, y_k=P\by_k,\,f_i'(x_\star)=P\bg_{i,\star} ,\, f_i'(y_k)=P\bg_{i,k},\, f_i'(y_{k+1})=P\bg_{i,k+1}, \]
			that is $\by_\star=\bg_{i,\star}:=0$, $\bx_k:=e_1$, $\by_k:=e_2$, $\bg_{i,k}:=e_{2+i}$, $\bg_{i,k+1}:=e_{2+n+i}$. For iteration $k+1$:
			\begin{equation*}
			\begin{aligned}
				\by_{k+1}&:=\tau_k \by_k+(1-\tau_k)\bx_k,\\
				\bx_{k+1}^{({l})}&:=\bx_k-\gamma_k \bg_{l,k},
			\end{aligned}
			\end{equation*}
			in order to have $y_{k+1}=P\by_{k+1}$ and $x_{k+1}^{(l)}=P\bx_{k+1}^{({l})}$. Finally, we do the same for function values: $\bfu_{i,\star},\ \bfu_{i,k},\ \bfu_{i,k+1}\in\R^{2n}$ (for all $i\in\{1,\hdots,n\}$) with	
			\[ f_\star=F\bfu_\star,\, f_i(y_k)=F\bfu_{i,k},\, f_i(y_{k+1})=F\bfu_{i,k+1}, \]		
			that is $\bfu_{i,\star}:=0$, $\bfu_{i,k}:=e_i$ and $\bfu_{i,k+1}:=e_{n+i}$. For each function $f_l$ (with $l\in\{1,\hdots,n\}$) we introduce a set of multipliers $\{\lambda^{(l)}_{i,j}\}_{i,j\in I_k}$ with $I_k=\{\star,k,k+1\}$.  We use  $\Sb^{2+2n}$ is the space of $(2+2n)\times (2+2n)$ symmetric matrices, and use $M$ as defined in~\eqref{eq:MM}.
			
			\begin{oframed}
				\noindent Given two doublets $(d_k,Q_k)$ and $(d_{k+1},Q_{k+1})$, a pair $(\tau_k,\gamma_k)$, and some $n\in\N$, the inequality $\E_{i_k}\phi^f_{k+1}(x_{k+1}^{(i_k)},y_{k+1})\leq \phi^f_{k}(x_k,y_k)$ ($\phi_k^f$ defined in~\eqref{eq:pavging_pot}) holds for all $f(x)=\tfrac1n\sum_{i=1}^n f_i(x)$ such that $f_i\in\FL(\Rd)$, for all $d\in\N$, and all $x_k,y_k\in\Rd$ used to generate $x_{k+1}^{(i_k)},y_{k+1}$ with method~\eqref{eq:pavging_app} if and only if for all $l\in\{1,\hdots,n\}$ there exists $\{\lambda_{i,j}^{(l)}\}_{i,j\in I_k}$ with $I_k=\{\star,k,k+1\}$ such that
				\begin{equation*}
				\begin{aligned}
				& \lambda_{i,j}^{(l)} \geq 0 \text{ for all } i,j\in I_k \text{ and all } l\in\{1,\hdots,n\} ,\\
				& \tfrac{ d_{k+1}}{n}\sum_{l=1}^n(\bfu_{l,k+1}-\bfu_{l,\star})- \tfrac{d_k}n\sum_{l=1}^n(\bfu_{l,k}-\bfu_{l,\star}) + \sum_{\substack{i,j\in I_k\\ l\in\{1,\hdots,n\}}} \lambda_{i,j}^{(l)} (\bfu_{l,i}-\bfu_{l,j}) = 0 \text{ (in $\R^{2n}$)},\\
				&  \tfrac1n \sum_{l\in\{1,\hdots,n\}} V_{k+1}^{(l)}-V_k + \sum_{\substack{i,j\in I_k\\ l\in\{1,\hdots,n\}}} \lambda_{i,j}^{(l)} M_{i,j}^{(l)} \preceq 0 \text{ (linear matrix inequality in $\Sb^{2+2n}$)},
				\end{aligned}
				\end{equation*}
				with 
				\begin{equation*}
				\begin{aligned}
				V_{k}&:= \begin{pmatrix}
				\bx_{k}-\by_\star & \by_{k}-\by_\star & \bg_{1,k} & \hdots & \bg_{n,k} 
				\end{pmatrix}Q_k\begin{pmatrix}
				\bx_{k}^\top-\by_\star^\top \\ \by_{k}^\top-\by_\star^\top \\ \bg_{1,k}^\top \\ \vdots \\ \bg_{n,k}^\top 
				\end{pmatrix}\in\Sb^{2+2n},\\
				V_{k+1}^{(l)}&:= \begin{pmatrix}
				\bx_{k+1}^{(l)}-\by_\star & \by_{k+1}-\by_\star & \bg_{1,k+1} & \hdots & \bg_{n,k+1} 
				\end{pmatrix}Q_{k+1}\begin{pmatrix}
				\bx_{k+1}^{(l)\top}-\by_\star^\top \\ \by_{k+1}^\top-\by_\star^\top \\ \bg_{1,k+1}^\top \\ \vdots \\ \bg_{n,k+1}^\top 
				\end{pmatrix}\in\Sb^{2+2n},\\
				M_{i,j}^{(l)}&:=\begin{pmatrix}
				\by_i & \by_j & \bg_{l,i} & \bg_{l,j}
				\end{pmatrix} M \begin{pmatrix}
				\by_i & \by_j & \bg_{l,i} & \bg_{l,j}
				\end{pmatrix}^\top\in\Sb^{2+2n}.
				\end{aligned}
				\end{equation*}
			\end{oframed}
			\clearpage 
			\section{Stochastic gradients under weak growth conditions}\label{sec:weakgrowth_proofs}
			This section deals with weak growth conditions. It is straightforward to adapt the methodology to e.g., strong growth conditions. However, since recent works already covered the topic in a comprehensive way (see e.g.,~\citet{vaswani2018fast} and the references therein), we chose not to cover~it.
			
			Under similar motivations as those of the \secref{sec:ex_pravg_sgd}, consider minimizing $f\in\FL$
			\[ \min_{x\in\Rd} f(x),\]
			with an unbiased gradient estimate $G(x;i)$ satisfying a weak growth condition for some $\rho\geq 1$:
			\begin{equation*}
			\E_i G(x;i)=f'(x),\quad \E_i \normsq{G(x;i)}\leq 2\rho L (f(x)-f_\star),
			\end{equation*}
			for all $x\in\Rd$. In that setting, it was shown by~\citet{vaswani2018fast} that averaging was actually reaching the $\bO(k^{-1})$ convergence for $\E f(\bar{x}_k)-f_\star$. The best method we could obtain with the previous methodology achieves a $\bO(k^{-1})$ convergence for function values at its last iterate.
			
			\begin{theorem}\label{thm:wgc} Let $x_k\in\Rd$, $f\in\FL$ and an unbiased stochastic oracle satisfying a weak growth condition:
				\begin{equation*}
				\E_i G(x;i)=f'(x),\quad \E_i \normsq{G(x;i)}\leq 2\rho L (f(x)-f_\star)
				\end{equation*}
				for all $x\in\Rd$ and $i\in I$.	Then, the iterative scheme
				\begin{equation*}
				\begin{aligned}
				&y_{k+1} = \tfrac{d_k}{d_k+\delta_k L} y_k + \tfrac{\delta_k L}{d_k+\delta_k L} x_k,\\
				&x_{k+1}^{({i_k})} = x_k - \delta_k G(y_{k+1};{i_k}),
				\end{aligned}
				\end{equation*}
				satisfies
				\[d_{k+1}[f(y_{k+1})-f_\star]+\tfrac{L}{2}\E_{i_k}\normsq{x_{k+1}^{({i_k})}-x_\star}\leq d_{k} (f(y_k)-f_\star)+\tfrac{L}{2}\normsq{x_k-x_\star},\]
				for all values of $d_k,\delta_k\geq0$ and $d_{k+1}=d_k+\delta_k L - \rho \delta_k^2 L^2$.
			\end{theorem}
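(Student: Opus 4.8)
The plan is to follow the template of the proof of Theorem~\ref{thm:pavg_BV} in \appref{sec:primalavg_pot}: the iterative scheme here is exactly the primal-averaging scheme, only with the weak growth condition playing the role previously played by the bounded-variance assumption, so I expect the one-step inequality to reduce to a weighted sum of a few elementary inequalities, verified by substituting the update rules. Concretely, I would combine three facts with nonnegative weights: (i) convexity of $f$ between $y_{k+1}$ and $x_\star$, $f_\star\geq f(y_{k+1})+\inner{f'(y_{k+1})}{x_\star-y_{k+1}}$, with weight $\lambda_1=\delta_k L$; (ii) convexity of $f$ between $y_{k+1}$ and $y_k$, $f(y_k)\geq f(y_{k+1})+\inner{f'(y_{k+1})}{y_k-y_{k+1}}$, with weight $\lambda_2=d_k$; and (iii) the weak growth condition at $y_{k+1}$, $\E_{i_k}\normsq{G(y_{k+1};i_k)}\leq 2\rho L (f(y_{k+1})-f_\star)$, with weight $\lambda_3=\tfrac{\delta_k^2 L}{2}$. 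Since all three weights are nonnegative for any $d_k,\delta_k\geq0$, no step-size restriction is needed, which matches the statement.

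The rest is algebra. Expanding $\E_{i_k}\normsq{x_{k+1}^{(i_k)}-x_\star}$ with $x_{k+1}^{(i_k)}=x_k-\delta_k G(y_{k+1};i_k)$ and the unbiasedness $\E_{i_k}G(y_{k+1};i_k)=f'(y_{k+1})$ gives $\tfrac{L}{2}\E_{i_k}\normsq{x_{k+1}^{(i_k)}-x_\star}-\tfrac{L}{2}\normsq{x_k-x_\star}=-\delta_k L\inner{f'(y_{k+1})}{x_k-x_\star}+\tfrac{\delta_k^2 L}{2}\E_{i_k}\normsq{G(y_{k+1};i_k)}$. Using (iii) to replace $\tfrac{\delta_k^2 L}{2}\E_{i_k}\normsq{G(y_{k+1};i_k)}$ by $\rho\delta_k^2 L^2(f(y_{k+1})-f_\star)$, and the recursion $d_{k+1}=d_k+\delta_k L-\rho\delta_k^2 L^2$, the coefficient of $f(y_{k+1})-f_\star$ collapses to $d_k+\delta_k L$, so it remains to show $(d_k+\delta_k L)(f(y_{k+1})-f_\star)-d_k(f(y_k)-f_\star)-\delta_k L\inner{f'(y_{k+1})}{x_k-x_\star}\leq 0$. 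Inequalities (i)--(ii) bound this left-hand side by $\inner{f'(y_{k+1})}{(d_k+\delta_k L)y_{k+1}-d_k y_k-\delta_k L x_k}$, which vanishes identically because $y_{k+1}=\tfrac{d_k}{d_k+\delta_k L}y_k+\tfrac{\delta_k L}{d_k+\delta_k L}x_k$, i.e. $(d_k+\delta_k L)y_{k+1}=d_k y_k+\delta_k L x_k$.

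As in the other results of the paper, the main obstacle is not verifying this weighted sum but discovering it — obtaining the weights $(\lambda_1,\lambda_2,\lambda_3)=(\delta_k L,\,d_k,\,\tfrac{\delta_k^2 L}{2})$ and the recursion $d_{k+1}=d_k+\delta_k L-\rho\delta_k^2 L^2$ from the numerical/SDP design procedure. Two points worth flagging in the write-up: smoothness inequalities are not needed here (unlike in Theorem~\ref{thm:pavg_BV}), since the weak growth condition already controls the second moment of the stochastic gradient directly in terms of the function-value gap; and the precise form of $d_{k+1}$ is exactly what cancels the $\rho\delta_k^2 L^2$ penalty coming from the variance term against the overshoot in the coefficient of $f(y_{k+1})-f_\star$.
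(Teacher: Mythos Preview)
Your proposal is correct and matches the paper's own proof essentially line for line: the paper combines the same three inequalities (convexity between $x_\star$ and $y_{k+1}$, convexity between $y_k$ and $y_{k+1}$, and the weak growth condition at $y_{k+1}$) with the same weights $(\lambda_1,\lambda_2,\lambda_3)=(\delta_k L,\,d_k,\,\tfrac{\delta_k^2 L}{2})$, and reformulates the weighted sum directly into the potential inequality. Your observation that smoothness is not needed here (only convexity plus the weak growth condition) is also exactly how the paper proceeds.
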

			Using $\delta_k=\tfrac{1}{2\rho L}$ (choice that maximizes $d_{k+1}$) and $d_0=0$ leads to $d_k=\tfrac{k}{4\rho}$ and to the algorithm
			\begin{equation*}
			\begin{aligned}
			&y_{k+1} = \tfrac{k}{k+2} y_k + \tfrac{2}{k+2} x_k,\\
			&x_{k+1}^{({i_k})} = x_k - \tfrac{1}{2 \rho L} G(y_{k+1};{i_k}),
			\end{aligned}
			\end{equation*}
			for which the bound $\E f(y_{k})-f_\star\leq \tfrac{2\rho L\normsq{x_0-x_\star}}{k}$ holds for $k\geq 0$. As in the previous section (over-parametrized models, \appref{sec:overparameterized}), the use of primal averaging for weak growth conditions appeared naturally through the use of the parameter selection technique starting from
			\begin{equation*}
			\begin{aligned}
			y_{k+1}&= (1-\tau_k) x_k+\tau_k z_k,\\
			x_{k+1}^{(i_k)}&=y_{k+1}-\alpha_k G(y_{k+1};i_k),\\
			z_{k+1}^{(i_k)}&=(1-\delta_k) y_{k+1}+\delta_k z_k -\gamma_k G(y_{k+1};i_k),\\
			\end{aligned}
			\end{equation*}
			for which we wish to optimize the policy $\{(\tau_k,\alpha_k,\delta_k,\gamma_k)\}_k$. Using the line-search workaround
			\begin{equation*}
			\begin{aligned}
			y_{k+1}&=\argm{x}{f(x)\, \st\, x\in x_k+\sspan\{z_k-x_k\}},\\
			x_{k+1}^{(i_k)}&=\argm{x}{f(x)\, \st\, x\in y_{k+1}+\sspan \{G(y_{k+1};i_k)\}},\\
			z_{k+1}^{(i_k)}&=(1-\delta_k) y_{k+1}+\delta_k z_k -\gamma_k G(y_{k+1};i_k),\\
			\end{aligned}
			\end{equation*}
			the parameters of primal averaging appeared as a feasible point to the parameter selection technique.
			
			Before going into next section, let us briefly mention that this setting can be embedded within similar LMIs as before, by encoding the $G(x;i)$'s for all $i$'s and all $x$'s where a stochastic gradient is used. Then, one can rely on interpolation conditions for constraining gradients and stochastic gradients (stochastic gradients are embedded within interpolation conditions by averaging them $f'(x)=\tfrac1n\sum_{i=1}^n G(x;i)$). For example, for analyzing a stochastic gradient scheme $x_{k+1}^{(i_k)}=x_k-\delta_k G(x_k;i_k)$ under a weak growth condition, for a potential $\phi_k^f=d_k \, (f(x_k)-f_\star)+\tfrac{L}2 \normsq{x_k-x_\star}$, one can use a Gram matrix $G=P^\top P\succeq 0$ with
			\[P = [ \ x_k \ | \ G(x_k;1)\ \hdots\ G(x_k;n) \ | \ f'(x_{k+1}^{(1)}) \ \hdots \ f'(x_{k+1}^{(n)})\ ], \]
			along with function values $F=[ \ f(x_k) \ | \ f(x_{k+1}^{(1)}) \ \hdots \ f(x_{k+1}^{(n)}) \ ]$; the subtlety in that setting is to include $A_{\textrm{var}}$ and $a_{\textrm{var}}$ for encoding the variance of $G(x_k;i)$'s, with
			\[ A_{\textrm{var}} = \tfrac1n \sum_{i=1}^n e_{1+i}e_{1+i}^\top\in\Sb^{1+2n},\quad a_{\textrm{var}}=2\rho L \,e_1\in\R^{n+2},\]
			for requiring $\tra (A_{\textrm{var}} G)=\E_i\normsq{G(x_k;i)}\leq 2\rho L (f(x_k)-f_\star)=F a_{\textrm{var}}$.
			\subsection{Proof of Theorem~\ref{thm:wgc}}\label{sec:wcg_pavging}

			\begin{proof} Combine the following inequalities with their corresponding weights.
				\begin{itemize}
					\item Convexity between $x_\star$ and $y_{k+1}$  with weight $\lambda_1=\delta_k L$,
					\[ f_\star\geq f(y_{k+1})+\inner{f'(y_{k+1})}{x_\star-y_{k+1}},\]
					\item convexity between $y_k$ and $y_{k+1}$  with weight $\lambda_2=d_k$,
					\[f(y_{k})\geq f(y_{k+1})+\inner{f'(y_{k+1})}{y_k-y_{k+1}},\]
					\item weak growth condition with weight $\lambda_3=\tfrac{\delta_k^2 L}{2}$
					\[\E_{i_k}\normsq{G(y_{k+1};{i_k})}\leq 2\rho L (f(y_{k+1})-f_\star). \]
				\end{itemize}
				The weighted sum of those inequalities yields the desired result:
				\begin{equation*}
				\begin{aligned}
				0\geq\, &\lambda_1 \left[f(y_{k+1})-f_\star+\inner{f'(y_{k+1})}{x_\star-y_{k+1}} \right]+\lambda_2\left[f(y_{k+1})- f(y_{k})+\inner{f'(y_{k+1})}{y_k-y_{k+1}}\right]\\
				&+\lambda_3 \left[ \E_{i_k}\normsq{G(y_{k+1};{i_k})} - 2\rho L (f(y_{k+1})-f_\star) \right]\\
				=& (d_k+\delta_k L-\rho \delta_k^2 L^2)(f(y_{k+1})-f_\star)+\tfrac{L}2 \E_{i_k}\normsq{x_{k+1}^{({i_k})}-x_\star}-d_k (f(y_k)-f_\star)-\tfrac{L}2 \normsq{x_{k}-x_\star},
				\end{aligned}
				\end{equation*}
				which can be rearranged to the desired:
				\[ (d_k+\delta_k L-\rho \delta_k^2 L^2)(f(y_{k+1})-f_\star)+\tfrac{L}2 \E_{i_k}\normsq{x_{k+1}^{({i_k})}-x_\star}\leq d_k (f(y_k)-f_\star)+\tfrac{L}2 \normsq{x_{k}-x_\star}. \]
			\end{proof}
			
			\clearpage 
			
			
			\section{Bounded variance at optimum}\label{sec:boundedVaratOpt}
			As in the previous sections, it is possible to apply the methodology and the parameter selection technique for designing a method to solve
			\[ \min_{x\in\Rd} \{ f(x)\equiv \ \E_i f_i(x) \},  \]
			with $\E_i\normsq{f'(x_\star)}\leq\sigma_\star^2$ (over-parametrized models from \secref{sec:zeroVarOpt} arise as particular case with $\sigma_\star=0$). Again, we obtain a primal averaging scheme, but with smaller allowed step-sizes.
			\begin{theorem}\label{thm:boundedvaratopt} Let $x_k\in\Rd$, $f_i\in\FL$ and an optimal point $x_\star$ such that $\E_i\normsq{f_i'(x_\star)}\leq \sigma^2_\star$. Then the iterative scheme
				\begin{equation*}
				\begin{aligned}
					y_{k+1}&=\tfrac{d_k}{d_k+\delta_k L}y_k+\tfrac{\delta_k L}{d_k+\delta_k L}x_k,\\
					x_{k+1}^{({i_k})}&=x_k-\delta_k f_{{i_k}}'(y_{k+1}),
				\end{aligned}
				\end{equation*}
				satisfies
				\[d_{k+1} (f(y_{k+1})-f_\star)+\tfrac{L}{2}\E_{i_k}\normsq{x_{k+1}^{({i_k})}-x_\star}\leq d_{k} (f(y_k)-f_\star)+\tfrac{L}{2}\normsq{x_k-x_\star}+e_k \sigma^2_\star,\]
				for all values $d_k,d_{k+1},\delta_k,e_k\geq0$ satisfying $0\leq \delta_k<\tfrac1L$, $d_k\geq0$, $d_{k+1}\leq d_k+\delta_k L$ and $e_k=\tfrac{\delta_k ^2 L}{2 (1-\delta_k  L)}$.
			\end{theorem}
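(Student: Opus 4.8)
The plan is to prove Theorem~\ref{thm:boundedvaratopt} exactly as Theorems~\ref{thm:overparameterized} and~\ref{thm:pavg_BV} are proved in \appref{sec:paving_pot_overp} and \appref{sec:primalavg_pot}: exhibit a feasible point of~\eqref{eq:tt}, i.e.\ a nonnegative weighted combination of valid inequalities that reassembles into the claimed one-step potential decrease. The combination I would take uses three inequalities. First, the $L$-smooth convex interpolation inequality of each $f_i$ between $y_{k+1}$ and $x_\star$, averaged over $i$, with weight $\delta_k L$, namely
\[ f_\star \geq f(y_{k+1}) + \inner{f'(y_{k+1})}{x_\star - y_{k+1}} + \tfrac{1}{2L}\E_{i_k}\normsq{f_{i_k}'(y_{k+1}) - f_{i_k}'(x_\star)}. \]
Second, convexity of $f$ between $y_k$ and $y_{k+1}$ with weight $d_k$. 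Third, the variance bound at the optimum, $\E_{i_k}\normsq{f_{i_k}'(x_\star)} \leq \sigma_\star^2$, with weight $e_k = \tfrac{\delta_k^2 L}{2(1-\delta_k L)}$.

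The next step is the bookkeeping. From the $y$-update one gets the identity $d_k(y_k - y_{k+1}) = \delta_k L(y_{k+1} - x_k)$, so the inner-product contributions of the first two inequalities collapse to $\delta_k L\inner{f'(y_{k+1})}{x_\star - x_k}$; expanding $\tfrac{L}{2}\E_{i_k}\normsq{x_{k+1}^{(i_k)} - x_\star}$ via $x_{k+1}^{(i_k)} = x_k - \delta_k f_{i_k}'(y_{k+1})$ and $\E_{i_k} f_{i_k}'(y_{k+1}) = f'(y_{k+1})$ produces exactly $-\delta_k L\inner{f'(y_{k+1})}{x_k - x_\star} + \tfrac{\delta_k^2 L}{2}\E_{i_k}\normsq{f_{i_k}'(y_{k+1})}$, and all linear-in-gradient terms cancel against the weighted sum. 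Subtracting the target inequality then leaves two leftover contributions, $(d_k + \delta_k L - d_{k+1})(f(y_{k+1}) - f_\star)$ and
\[ \tfrac{\delta_k}{2}\E_{i_k}\normsq{f_{i_k}'(y_{k+1}) - f_{i_k}'(x_\star)} + e_k\,\E_{i_k}\normsq{f_{i_k}'(x_\star)} - \tfrac{\delta_k^2 L}{2}\E_{i_k}\normsq{f_{i_k}'(y_{k+1})}, \]
and it remains to check both are nonnegative. The first is nonnegative because $d_{k+1} \leq d_k + \delta_k L$ and $f(y_{k+1}) \geq f_\star$; for the second, completing the square in $f_{i_k}'(y_{k+1})$ with (positive) coefficient $\tfrac{\delta_k}{2}(1 - \delta_k L)$ rewrites it as $\tfrac{\delta_k(1-\delta_k L)}{2}\E_{i_k}\normsq{f_{i_k}'(y_{k+1}) - \tfrac{1}{1-\delta_k L}f_{i_k}'(x_\star)} \geq 0$, the $\sigma_\star^2$- and $\normsq{f_{i_k}'(x_\star)}$-dependent remainders cancelling precisely because $e_k$ has been chosen as stated. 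A recursive use of the resulting inequality, as in \secref{sec:Lyap}, then yields the global guarantee.

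The only non-routine step — and the one dictating the constants — is the treatment of the $f_{i_k}'(x_\star)$ terms: one must identify the right shift $1/(1-\delta_k L)$ in the square and tune the weight on the variance inequality to $e_k$ so that the cross terms and the $\sigma_\star^2$ slack vanish together; this is also where the strict bound $\delta_k < 1/L$ is forced (rather than $\delta_k \leq 1/L$ as in the over-parametrized case), so that the coefficient of the surviving square stays positive and $e_k$ finite. Everything else is identical bookkeeping to the proofs already in the appendix, and the weights can alternatively be obtained or verified by solving the LMI described in \appref{sec:overp_SDP} with the variance at $x_\star$ encoded as in \appref{sec:weakgrowth_proofs}.
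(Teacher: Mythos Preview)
Your proposal is correct and follows essentially the same route as the paper's proof: the same three inequalities (averaged smoothness/convexity of the $f_i$'s between $y_{k+1}$ and $x_\star$ with weight $\delta_k L$, convexity of $f$ between $y_k$ and $y_{k+1}$ with weight $d_k$, and the variance bound at $x_\star$ with weight $e_k$) are combined, and your completed square $\tfrac{\delta_k(1-\delta_k L)}{2}\E_{i_k}\normsq{f_{i_k}'(y_{k+1})-\tfrac{1}{1-\delta_k L}f_{i_k}'(x_\star)}$ is exactly the paper's residual $\tfrac{\delta_k}{2-2\delta_k L}\E_{i_k}\normsq{f_{i_k}'(x_\star)+(\delta_k L-1)f_{i_k}'(y_{k+1})}$ after factoring. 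The only cosmetic difference is that you carry the slack $(d_k+\delta_k L-d_{k+1})(f(y_{k+1})-f_\star)\geq 0$ explicitly, whereas the paper proves the case $d_{k+1}=d_k+\delta_k L$ and lets the $\leq$ follow.
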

			\begin{proof} Let us reformulate the following weighted sum.
				\begin{itemize}
					\item Averaged smoothness and convexity between $y_{k+1}$ and $x_\star$ with weight $\lambda_1=\delta_k L$:
					\[f_\star \geq  f(y_{k+1})+ \inner{f'(y_{k+1})}{x_\star-y_{k+1}}+\tfrac{1}{2L}\E_{i_k}\normsq{f'_{{i_k}}(y_{k+1})-f'_{i_k}(x_\star)},\]
					\item convexity between ${\color{red}y_{k+1}}$ and ${\color{red}y_k}$ with weight $\lambda_2=d_k$:
					\[f(y_k) \geq f(y_{k+1}) + \inner{f'(y_{k+1})}{y_k-y_{k+1}},\]
					\item bounded variance of the gradients at $x_\star$ with weight $\lambda_3=e_k$
					\[\E_{i_k}\normsq{f'_{i_k}(x_\star)}\leq \sigma^2_\star.\]
				\end{itemize}
				The weighted sum of those inequalities can be rewritten as {{\bf \color{red}[Update: there was a missing term in the weighted sum]}}
				\begin{equation*}
					\begin{aligned}
						0\geq & \lambda_1 \left[f(y_{k+1})-f_\star+ \inner{f'(y_{k+1})}{x_\star-y_{k+1}}+\tfrac{1}{2L}\E_{i_k}\normsq{f'_{{i_k}}(y_{k+1})}\right]\\
						&+\lambda_2\left[ f(y_{k+1}) -f(y_k)+ \inner{f'(y_{k+1})}{y_k-y_{k+1}}\right]+{\color{red}\lambda_3\left[\E_{i_k}\normsq{f'_{i_k}(x_\star)}-\sigma^2_\star\right]}\\
						=&(d_k+\delta_k L)(f(y_{k+1})-f_\star)+\tfrac{L}{2}\E_{i_k}\normsq{x_{k+1}^{({i_k})}-x_\star}- d_k (f(y_k)-f_\star)-\tfrac{L}{2}\normsq{x_k-x_\star}\\
						&+ \tfrac{\delta_k }{2-2 \delta_k  L} \E_{i_k}\normsq{f'_{i_k}(x_\star)+(\delta_k  L-1)f'_{{i_k}}(y_{k+1})}+ \tfrac{2 e_k (\delta_k  L-1)+\delta_k ^2 L}{2 \delta_k  L-2} \E_{i_k}\normsq{f'_{i_k}(x_\star)}-e_k\sigma^2_\star,
					\end{aligned}
				\end{equation*}
				which can be rearranged to (note that $e_k$ is chosen such that $\tfrac{2 e_k (\delta_k  L-1)+\delta_k ^2 L}{2 \delta_k  L-2}=0$)
				\begin{equation*}
					\begin{aligned}
						&(d_k+\delta_k L)(f(y_{k+1})-f_\star)+\tfrac{L}{2}\E_{i_k}\normsq{x_{k+1}^{({i_k})}-x_\star}\\ &\leq d_k (f(y_k)-f_\star)+\tfrac{L}{2}\normsq{x_k-x_\star}+e_k\sigma^2_\star-\tfrac{\delta_k }{2-2 \delta_k  L} \E_{i_k}\normsq{f'_{i_k}(x_\star)+(\delta_k  L-1)f'_{{i_k}}(y_{k+1})}\\
						&\leq d_k (f(y_k)-f_\star)+\tfrac{L}{2}\normsq{x_k-x_\star}+e_k\sigma^2_\star,
					\end{aligned}
				\end{equation*}
				where the last inequality follows from $\delta_k<\tfrac1L$.
			\end{proof}

			\clearpage 
			\section{Randomized block-coordinate descent}\label{sec:coordinatedescent}
			In this section, we illustrate the use of the methodology for block-coordinate type schemes. In contrast with standard references on the topic (see e.g., \citet{nesterov2012efficiency,richtarik2014iteration,fercoq2015accelerated}), we make use of the global Lipschitz constant of the function to be minimized, instead of Lipschitz constants of the individual blocks. This choice is made for convenience and illustrative purpose, and the results presented here  can be adapted for using the Lipschitz constants of the blocks (see e.g.,~\citet{shi2017better} where it is done for cyclic coordinate descent), or even for dealing with (separable) proximal operators~\citep{richtarik2014iteration}. 
			
			The setting is as follows: consider the problem
			\[\min_{x\in\Rd} f(x),\]
			where $f\in\FmuL$, and $x$ is partitioned into $n$ blocks: $x = \sum_{i=1}^n \bU_i \, x,$
			with the partition described by $[\bU_1\, \bU_2 \, \hdots \, \bU_n ] =I_d$. For solving the problem, we use $x_{k+1}^{(i_k)}=x_{k}-{\delta_k} \bU_{i_k} f'(x_{k})$,
			where $i_k$ is chosen uniformly at random in the set $\{1,\hdots,n\}$, $\delta_k$ is a step-size, and $\bU_{i} f'(x_{k})=\nabla_{i}f(x_k)$ is the directional derivate along the i$^\text{th}$ block of coordinates. We do not detail the LMI formulations here but rather note that they follow the exact same lines as in the previous sections after adapting the Gram matrix $G$, formulating it with $G=P^\top P\succeq 0$ and
			\[ P=[x_{\color{red}k} \, | \, \bU_1 f'(x_k) \, \hdots \, \bU_n f'(x_k) \,  | \, f'(x_{k+1}^{(1)}) \, \hdots \, f'(x_{k+1}^{(n)}) ].\]
			\subsection{Block-coordinate descent for smooth convex minimization}\label{sec:CD_sublin}
			
			\begin{theorem}\label{thm:RBCD_Lyap}
				Let $x_k\in\Rd$, $f\in\FL$, and $x_{k+1}^{({i_k})}=x_{k}-\delta_k \bU_{{i_k}} f'(x_{k})$ with $0\leq \delta_k\leq \tfrac1L$. The inequality $\E_{{i_k}}\phi_{k+1}^f(x_{k+1}^{({i_k})})\leq \phi_k^f(x_{k})$ holds with 
				\[\phi_k^f=d_k (f(x_k)-f_\star)+ \tfrac{L}{2}\normsq{x_k-x_\star},\]
				for all values $d_k\geq1$, and $d_{k+1}= d_{k}+\tfrac{\delta_k L}{n}$.
			\end{theorem}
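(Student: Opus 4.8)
The plan is to mirror the proof strategy used throughout the paper (e.g.\ Theorem~\ref{thm:sgd_BV}, Theorem~\ref{thm:pavg_BV}, Theorem~\ref{thm:overparameterized}): exhibit a short list of valid inequalities, combine them with nonnegative weights, substitute the update rule, and verify that the result rearranges into the claimed potential inequality plus a manifestly nonpositive remainder. First I would record the two elementary identities coming from the block partition $[\bU_1\,\cdots\,\bU_n]=I_d$. Since each $\bU_i$ is the orthogonal projector onto the $i$-th block, for any $g\in\Rd$ one has $\E_{i_k}[\bU_{i_k} g]=\tfrac1n g$, $\E_{i_k}\normsq{\bU_{i_k} g}=\tfrac1n\sum_{i=1}^n\normsq{\bU_i g}=\tfrac1n\normsq{g}$, and $\inner{g}{\bU_{i_k}g}=\normsq{\bU_{i_k}g}$. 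These will be applied with $g=f'(x_k)$.

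The two inequalities I would combine are: (i) convexity between $x_\star$ and $x_k$, namely $f_\star\geq f(x_k)+\inner{f'(x_k)}{x_\star-x_k}$, with weight $\lambda_1=d_{k+1}-d_k=\tfrac{\delta_k L}{n}\geq0$; and (ii) the averaged descent lemma between $x_k$ and $x_{k+1}^{({i_k})}$, namely $\E_{i_k}f(x_{k+1}^{({i_k})})\leq f(x_k)+\E_{i_k}\inner{f'(x_k)}{x_{k+1}^{({i_k})}-x_k}+\tfrac{L}{2}\E_{i_k}\normsq{x_{k+1}^{({i_k})}-x_k}$, with weight $\lambda_2=d_{k+1}\geq0$. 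The descent lemma is applied with the global constant $L$, which is legitimate because $L$-smoothness on $\Rd$ in particular implies the quadratic upper bound along the block direction $\bU_{i_k}f'(x_k)$.

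Substituting $x_{k+1}^{({i_k})}=x_k-\delta_k\bU_{i_k}f'(x_k)$ and applying the block identities, the weighted sum $\lambda_1[\cdots]+\lambda_2[\cdots]$ should collect into $\E_{i_k}\phi_{k+1}^f(x_{k+1}^{({i_k})})-\phi_k^f(x_k)$, plus the term $(d_{k+1}-d_k)(f(x_k)-f_\star)-\tfrac{\delta_k L}{n}\inner{f'(x_k)}{x_k-x_\star}$ (which is $\leq0$ by inequality (i) and the identity $d_{k+1}-d_k=\tfrac{\delta_k L}{n}$), plus a residual of the form $\tfrac{\delta_k}{n}\bigl[\tfrac{\delta_k L}{2}-d_{k+1}\bigl(1-\tfrac{\delta_k L}{2}\bigr)\bigr]\normsq{f'(x_k)}$. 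This bracket is nonpositive precisely when $d_{k+1}\bigl(1-\tfrac{\delta_k L}{2}\bigr)\geq\tfrac{\delta_k L}{2}$; since $\delta_k\leq\tfrac1L$ yields $1-\tfrac{\delta_k L}{2}\geq\tfrac{\delta_k L}{2}$, it suffices that $d_{k+1}\geq1$, which holds because $d_{k}\geq1$ and $\delta_k\geq0$. Rearranging then gives $\E_{i_k}\phi_{k+1}^f(x_{k+1}^{({i_k})})\leq\phi_k^f(x_k)$, as claimed.

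The only nontrivial point — the main obstacle — is getting the block algebra exactly right: using $\E_{i_k}\normsq{\bU_{i_k}f'(x_k)}=\tfrac1n\normsq{f'(x_k)}$ and the projector identity $\inner{f'(x_k)}{\bU_{i_k}f'(x_k)}=\normsq{\bU_{i_k}f'(x_k)}$, and tracking the $\tfrac1n$ factors so that the $\inner{f'(x_k)}{x_k-x_\star}$ cross terms cancel against the convexity inequality with the stated weights. Everything else is routine rearrangement; in particular the side conditions $d_k\geq1$ and $\delta_k\leq1/L$ are each used exactly once, to dispose of the residual $\normsq{f'(x_k)}$ term. (This also matches the LMI template of \appref{sec:Vk_pgm} with $P=[\,x_k\mid\bU_1 f'(x_k)\cdots\bU_n f'(x_k)\mid f'(x_{k+1}^{(1)})\cdots f'(x_{k+1}^{(n)})\,]$, so the weighted-sum proof can be independently validated numerically.)
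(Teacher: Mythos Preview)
Your approach is correct and essentially identical to the paper's: the same two inequalities (convexity between $x_\star$ and $x_k$ with weight $\tfrac{\delta_k L}{n}$, averaged descent lemma with weight $d_{k+1}=d_k+\tfrac{\delta_k L}{n}$), the same $\normsq{f'(x_k)}$ residual, and your verification via $d_{k+1}\geq 1$ and $1-\tfrac{\delta_k L}{2}\geq\tfrac{\delta_k L}{2}$ is in fact a bit slicker than the paper's quadratic-endpoint check. One small bookkeeping slip to fix when you write it up: once you actually form the weighted sum $\lambda_1[\cdots]+\lambda_2[\cdots]$, the term you call $T_1$ does not reappear---it \emph{is} $\lambda_1$ times inequality~(i) and is already absorbed---so the sum equals $\E_{i_k}\phi_{k+1}^f-\phi_k^f$ plus only the gradient-norm residual (with the sign opposite to your $T_2$), exactly as in the paper.
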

			\begin{proof}Once more, the proof consists in linear combinations of inequalities:
				\begin{itemize}
					\item convexity between $x_\star$ and $x_k$ with weight $\lambda_1=\tfrac{\delta_k L}n$:
					\[f_\star \geq  f(x_k) + \inner{f'(x_k)}{x_\star-x_k},\]
					\item averaged smoothness between $x_{k+1}^{(i)}$ and $x_k$ with weight $\lambda_2=d_k+\tfrac{\delta_k L}n$:
					\[ \E_{i_k} f(x_{k+1}^{({i_k})})\leq f(x_k)+\E_{i_k} \inner{f'(x_k)}{x_{k+1}^{({i_k})}-x_k}+\tfrac L 2 \E_{i_k}\normsq{x_{k+1}^{({i_k})}-x_k}.\]
				\end{itemize}
				Recalling that $f'(x_k)=\sum_{i=1}^n \bU_i f'(x_k)$ (and therefore $\sum_{i=1}^n \normsq{\bU_i f(x_k)}=\normsq{f(x_k)}$) and that $x_{k+1}^{({i_k})}=x_k-\delta_k \bU_{{i_k}}f'(x_k)$, the weighted sum can be reformulated as
				\begin{equation*}
				\begin{aligned}
				0\geq &\lambda_1 \left[ f(x_k)-f_\star + \inner{f'(x_k)}{x_\star-x_k}\right]\\
				&+\lambda_2 \left[\E_{i_k} f(x_{k+1}^{({i_k})})- f(x_k)-\E_{i_k} \inner{f'(x_k)}{x_{k+1}^{({i_k})}-x_k}-\tfrac L 2 \E_{i_k}\normsq{x_{k+1}^{({i_k})}-x_k}\right]\\
				=&(d_k+\tfrac{\delta_k L}{n}) \E_{i_k} (f(x_{k+1}^{({i_k})})-f_\star)+\tfrac{L}2\E_{i_k}\normsq{x_{k+1}^{({i_k})}-x_\star}\\&-d_k (f(x_k)-f_\star)-\tfrac L2\normsq{x_k-x_\star}+\tfrac{\delta_k}n\left((1-\tfrac{\delta_k L}{2})(d_k+\tfrac{\delta_k L}{n})-\tfrac{\delta_k L}{2}\right)\normsq{f'(x_k)},
				\end{aligned}
				\end{equation*}
				which can in turn be reorganized as
				\begin{equation*}
				\begin{aligned}
				&(d_k+\tfrac{\delta_k L}{n}) \E_{i_k} (f(x_{k+1}^{({i_k})})-f_\star)+\tfrac{L}2\E_{i_k}\normsq{x_{k+1}^{({i_k})}-x_\star}\\&\,\leq d_k (f(x_k)-f_\star)+\tfrac L2\normsq{x_k-x_\star}-\tfrac{\delta_k}n\left((1-\tfrac{\delta_k L}{2})(d_k+\tfrac{\delta_k L}{n})-\tfrac{\delta_k L}{2}\right)\normsq{f'(x_k)},\\&\,\leq d_k (f(x_k)-f_\star)+\tfrac L2\normsq{x_k-x_\star},
				\end{aligned}
				\end{equation*}
				where the last inequality follows from $\left((1-\tfrac{\delta_k L}{2})(d_k+\tfrac{\delta_k L}{n})-\tfrac{\delta_k L}{2}\right)\geq 0$, which can be verified as follows. Define 
				\[\phi(\delta_k):=\left((1-\tfrac{\delta_k L}{2})(d_k+\tfrac{\delta_k L}{n})-\tfrac{\delta_k L}{2}\right),\]
				which is a negative definite quadratic in $\delta_k$. In addition, we have $\phi(\tfrac1L)=\frac{1}{2} \left(d_k+\frac{1}{n}-1\right)>0$ (because $d_k\geq 1$ by assumption) and $\phi(0)=d_k>0$. Since by assumption we had $0\leq \delta_k\leq \tfrac1L$, this discussion completes the proof.
			\end{proof}

			\subsection{Block-coordinate descent for smooth strongly convex minimization}\label{sec:CD_lin}
			The following result was obtained using the same methodology as before: finding a feasible point to~\eqref{eq:tt}. We obtain a simple linear convergence expressed in terms of distance to optimality:
			\[ \E_{i_k}\normsq{x_{k+1}^{({i_k})}-x_\star} \leq \rho^2\normsq{x_k-x_\star},\]
			with $\rho^2:=\max\left\{\left(\tfrac{(\delta_k  \mu -1)^2+n-1}{n}\right),\left(\tfrac{(\delta_k  L -1)^2+n-1}{n}\right)\right\}$.
			
			\begin{theorem}\label{thm:RBCD_Lyap_mstr}
				Let $x_k\in\Rd$, $f\in\FmuL$ (class of $L$-smooth $\mu$-strongly convex functions), and $x_{k+1}^{({i_k})}=x_{k}-\delta_k \bU_{{i_k}} f'(x_{k})$. The inequality $\E_{{i_k}}\phi_{k+1}^f(x_{k+1}^{({i_k})})\leq \phi_k^f(x_{k})$ holds with 
				\[\phi_k^f=  a_k\normsq{x_k-x_\star},\]
				for all values $a_k>0$, and $a_{k+1}= a_k \max\left\{\left(\tfrac{(\delta_k  \mu -1)^2+n-1}{n}\right),\left(\tfrac{(\delta_k  L -1)^2+n-1}{n}\right)\right\}^{-1}$.
			\end{theorem}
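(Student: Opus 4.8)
The plan is to follow the recipe used throughout the paper: reduce the potential inequality $\E_{i_k}\phi_{k+1}^f(x_{k+1}^{({i_k})})\leq\phi_k^f(x_k)$ to the nonnegativity of a weighted sum of interpolation inequalities. Since $\phi_k^f$ carries no function‑value term, the only inequalities whose function values can cancel in a one‑step analysis are the two $\FmuL$‑interpolation inequalities between the pair $(x_k,x_\star)$; so the whole argument will come down to a single scalar inequality relating $\normsq{x_k-x_\star}$, $\inner{f'(x_k)}{x_k-x_\star}$ and $\normsq{f'(x_k)}$ (recall $f'(x_\star)=0$).

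First I would unfold the stochastic step. Since $[\bU_1\,\cdots\,\bU_n]=I_d$ the blocks are orthogonal, so $\sum_{i}\bU_i f'(x_k)=f'(x_k)$ and $\sum_{i}\normsq{\bU_i f'(x_k)}=\normsq{f'(x_k)}$, and a direct computation gives
\[
\E_{i_k}\normsq{x_{k+1}^{({i_k})}-x_\star}=\normsq{x_k-x_\star}-\tfrac{2\delta_k}{n}\inner{f'(x_k)}{x_k-x_\star}+\tfrac{\delta_k^2}{n}\normsq{f'(x_k)}.
\]
Writing $a_{k+1}=a_k/\rho^2$ with $\rho^2:=\max\{\tfrac{(\delta_k\mu-1)^2+n-1}{n},\tfrac{(\delta_k L-1)^2+n-1}{n}\}$ and using the elementary factorisation $1-(\delta_k\lambda-1)^2=\delta_k\lambda(2-\delta_k\lambda)$ to rewrite $1-\rho^2=\tfrac{\delta_k}{n}\,c$ with $c:=\min\{\mu(2-\delta_k\mu),\,L(2-\delta_k L)\}$, the claim (for $\delta_k>0$; $\delta_k=0$ is trivial) becomes exactly
\[
2\inner{f'(x_k)}{x_k-x_\star}-\delta_k\normsq{f'(x_k)}\ \geq\ c\,\normsq{x_k-x_\star}.
\]

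To prove this I would split on the sign of $2-\delta_k(L+\mu)$. If $2-\delta_k(L+\mu)\geq0$ then $c=\mu(2-\delta_k\mu)$ and $1-\delta_k\mu\geq0$; setting $h:=f'(x_k)-\mu(x_k-x_\star)$ — the gradient at $x_k$ of the $(L-\mu)$‑smooth convex function $g:=f-\tfrac\mu2\normsq{\cdot-x_\star}$, which vanishes at $x_\star$ — one checks $2\inner{f'(x_k)}{x_k-x_\star}-\delta_k\normsq{f'(x_k)}-c\normsq{x_k-x_\star}=2(1-\delta_k\mu)\inner{h}{x_k-x_\star}-\delta_k\normsq{h}$, and the $(L-\mu)$‑co‑coercivity of $g$ at $(x_k,x_\star)$ — which is precisely the sum of the two $\FmuL$‑interpolation inequalities between $x_k$ and $x_\star$ — gives $\inner{h}{x_k-x_\star}\geq\tfrac1{L-\mu}\normsq{h}$, whence $2(1-\delta_k\mu)\inner{h}{x_k-x_\star}-\delta_k\normsq{h}\geq\tfrac{2-\delta_k(L+\mu)}{L-\mu}\normsq{h}\geq0$. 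If instead $2-\delta_k(L+\mu)\leq0$ then $c=L(2-\delta_k L)$ and $\delta_k L\geq1$; the symmetric computation with $h':=L(x_k-x_\star)-f'(x_k)$, the gradient at $x_k$ of the $(L-\mu)$‑smooth convex $g_L:=\tfrac L2\normsq{\cdot-x_\star}-f$ (again vanishing at $x_\star$), rewrites the left‑hand side minus $c\normsq{x_k-x_\star}$ as $2(\delta_k L-1)\inner{h'}{x_k-x_\star}-\delta_k\normsq{h'}$, and the same co‑coercivity bound yields $\geq\tfrac{\delta_k(L+\mu)-2}{L-\mu}\normsq{h'}\geq0$. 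Either way the scalar inequality holds. (One assumes $\mu<L$; when $\mu=L$, $f$ is the fixed quadratic $\tfrac L2\normsq{\cdot-x_\star}+\mathrm{const}$ and the inequality is immediate.)

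The main obstacle is the reduction itself: one must both carry out the block‑average expansion accurately and — the real crux — recognise the identity $1-\rho^2=\tfrac{\delta_k}{n}\min\{\mu(2-\delta_k\mu),L(2-\delta_k L)\}$, since it is the factorisation $1-(\delta_k\lambda-1)^2=\delta_k\lambda(2-\delta_k\lambda)$ that makes the $\max$/$\min$ in the statement coincide with the coefficient $c$ that naturally appears in the one‑step gradient‑descent contraction estimate. Once that is in place, what remains is the routine verification that gradient descent contracts on a smooth (strongly) convex function, carried out twice — with the $\mu$‑shifted and the $L$‑shifted potential — and glued together by the single dichotomy at $\delta_k=\tfrac2{L+\mu}$.
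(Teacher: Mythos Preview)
Your proof is correct and takes essentially the same route as the paper: the same case split at $\delta_k=\tfrac{2}{L+\mu}$, and the same single inequality (the $\FmuL$ co-coercivity $(L+\mu)\inner{f'(x_k)}{x_k-x_\star}\geq\normsq{f'(x_k)}+L\mu\normsq{x_k-x_\star}$, obtained by summing the two interpolation inequalities between $x_k$ and $x_\star$) applied with the same weight in each case. Your intermediate factorisation $1-\rho^2=\tfrac{\delta_k}{n}\min\{\mu(2-\delta_k\mu),L(2-\delta_k L)\}$ and the shifts $h=f'(x_k)-\mu(x_k-x_\star)$, $h'=L(x_k-x_\star)-f'(x_k)$ make the reduction to the scalar inequality cleaner, but the content and the residual $\tfrac{|2-\delta_k(L+\mu)|}{L-\mu}\normsq{\cdot}$ match the paper exactly.
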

			
			\begin{proof}
				As in the previous sections, the proof consists in a linear combination of inequalities:
				\begin{itemize}
					\item smoothness and strong convexity between $x_k$ and $x_\star$ with weight $\lambda$
					\begin{equation*}
					\begin{aligned}
					f_\star\geq &f(x_k)+\inner{f'(x_k)}{x_\star-x_k}\\&+\tfrac{1}{2(1-\tfrac\mu L)}\left(\tfrac1L\normsq{f'(x_k)}+\mu\normsq{x_k-x_\star}-2\tfrac\mu L\inner{f'(x_k)}{x_k-x_\star}\right),
					\end{aligned}
					\end{equation*}
					\item smoothness and strong convexity between $x_\star$ and $x_k$ with weight $\lambda$
					\[f(x_k)\geq f_\star+\tfrac{1}{2(1-\tfrac\mu L)}\left(\tfrac1L\normsq{f'(x_k)}+\mu\normsq{x_k-x_\star}-2\tfrac\mu L\inner{f'(x_k)}{x_k-x_\star}\right).\]
				\end{itemize}
				Summing up those two inequalities leads to
				\[0\geq \lambda \left[\inner{f'(x_k)}{x_\star\edit{-x_k}}+\tfrac{1}{(1-\tfrac\mu L)}\left(\tfrac1L\normsq{f'(x_k)}+\mu\normsq{x_k-x_\star}-2\tfrac\mu L\inner{f'(x_k)}{x_k-x_\star}\right)\right],\]
				which we reformulate below. The proof is split in two cases:
				\begin{itemize}
					\item $\rho^2=\tfrac{(\delta_k\mu-1)^2+n-1}{n}$; in that case set $\lambda=\tfrac{2\delta_k}{n}(1-\delta_k\mu)$.
					Recalling that $f'(x_k)=\sum_{{i_k}=1}^n \bU_{i_k} f'(x_k)$ and that $x_{k+1}^{({i_k})}=x_k-\delta_k \bU_{{i_k}}f'(x_k)$, we get
					\begin{equation*}
					\begin{aligned}
					0&\geq \lambda \left[\inner{f'(x_k)}{x_\star-x_k}+\tfrac{1}{(1-\tfrac\mu L)}\left(\tfrac1L\normsq{f'(x_k)}+\mu\normsq{x_k-x_\star}-2\tfrac\mu L\inner{f'(x_k)}{x_k-x_\star}\right)\right]\\
					&=\tfrac{1}{n} \sum_{{i_k}=1}^n\normsq{x_k-\delta_k\bU_{i_k} f'(x_k)-x_\star}-\tfrac{(1-\delta_k \mu)^2+n-1}{n}\normsq{x_k-x_\star}\\&\quad +\tfrac{\delta_k(2-\delta_k(L+\mu))}{n(L-\mu)}\normsq{\mu(x_k-x_\star)-f'(x_k)},
					\end{aligned}
					\end{equation*}
					which can be reorganized as
					\begin{equation*}
					\begin{aligned}
					\E_{i_k}\normsq{x_{k+1}^{({i_k})}-x_\star}&\leq \tfrac{(1-\delta_k \mu)^2+n-1}{n}\normsq{x_k-x_\star}-\tfrac{\delta_k(2-\delta_k(L+\mu))}{n(L-\mu)}\normsq{\mu(x_k-x_\star)-f'(x_k)},\\
					&\leq \tfrac{(1-\delta_k \mu)^2+n-1}{n}\normsq{x_k-x_\star},
					\end{aligned}
					\end{equation*}
					where the last inequality is valid because \[\rho^2=\tfrac{(\delta_k\mu-1)^2+n-1}{n}=\max\left\{\left(\tfrac{(\delta_k  \mu -1)^2+n-1}{n}\right),\left(\tfrac{(\delta_k  L -1)^2+n-1}{n}\right)\right\}\] in this setting (see choice of $\rho$), and therefore $\delta_k\leq \tfrac2{L+\mu}$.
					\item {\color{red}$\rho^2=\tfrac{(\delta_k L-1)^2+n-1}{n}$}; in that case set $\lambda=\tfrac{2\delta_k}{n}(\delta_k L-1)$. By recalling again that $f'(x_k)=\sum_{{i_k}=1}^n \bU_{i_k} f'(x_k)$ and that $x_{k+1}^{({i_k})}=x_k-\delta_k \bU_{{i_k}}f'(x_k)$, we get
					\begin{equation*}
					\begin{aligned}
					0&\geq \lambda \left[\inner{f'(x_k)}{x_\star\edit{-x_k}}+\tfrac{1}{(1-\tfrac\mu L)}\left(\tfrac1L\normsq{f'(x_k)}+\mu\normsq{x_k-x_\star}-2\tfrac\mu L\inner{f'(x_k)}{x_k-x_\star}\right)\right]\\
					&=\tfrac{1}{n} \sum_{{i_k}=1}^n\normsq{x_k-\delta_k\bU_{i_k} f'(x_k)-x_\star}-\tfrac{(1-\delta_k L)^2+n-1}{n}\normsq{x_k-x_\star}\\&\quad +\tfrac{\delta_k(-2+\delta_k(L+\mu))}{n(L-\mu)}\normsq{L(x_k-x_\star)-f'(x_k)},
					\end{aligned}
					\end{equation*}
					which can be reorganized as
					\begin{equation*}
					\begin{aligned}
					\E_{i_k}\normsq{x_{k+1}^{({i_k})}-x_\star}&\leq \tfrac{(1-\delta_k L)^2+n-1}{n}\normsq{x_k-x_\star}-\tfrac{\delta_k(-2+\delta_k(L+\mu))}{n(L-\mu)}\normsq{L(x_k-x_\star)-f'(x_k)},\\
					&\leq \tfrac{(1-\delta_k L)^2+n-1}{n}\normsq{x_k-x_\star},
					\end{aligned}
					\end{equation*}
					where the last inequality is valid because \[\rho^2=\tfrac{(\delta_k L-1)^2+n-1}{n}=\max\left\{\left(\tfrac{(\delta_k  \mu -1)^2+n-1}{n}\right),\left(\tfrac{(\delta_k  L -1)^2+n-1}{n}\right)\right\},\] in this setting (see choice of $\rho$), and therefore $\delta_k\geq \tfrac2{L+\mu}$.
				\end{itemize}
			\end{proof}

		\end{document}